\documentclass[a4paper,10pt]{amsart}

\usepackage[english]{babel}
\usepackage[utf8]{inputenc}
\usepackage[T1]{fontenc}

\usepackage[babel=true]{csquotes}

\usepackage{comment}

\usepackage{amsfonts}
\usepackage{amsthm} 
\usepackage{amsmath}
\usepackage{amssymb}

\usepackage{multirow} 
\usepackage{kbordermatrix}

\usepackage{brunnian}

\usetikzlibrary{arrows}

\theoremstyle{definition}
\newtheorem{defn}{Definition}[section] 

\theoremstyle{plain}
\newtheorem{lem}[defn]{Lemma} 
\newtheorem{prop}[defn]{Proposition}
\newtheorem{thm}[defn]{Theorem}
\newtheorem{cor}[defn]{Corollary}

\newtheorem*{divers}{Proposition \ref{prop groupe cable}}

\theoremstyle{remark}
\newtheorem{rem}[defn]{Remark}
\newtheorem{ex}[defn]{Example}

\newcommand{\C}{\mathbb{C}}
\newcommand{\R}{\mathbb{R}}

\newcommand{\Z}{\mathbb{Z}}
\newcommand{\N}{\mathbb{N}}
\newcommand{\F}{\mathbb{F}}

\title{The $L^2$-Alexander invariant detects the unknot}
\author{Fathi Ben Aribi}

\address{Institut de math\'ematiques de Jussieu–Paris Rive gauche, universit\'e Paris-Diderot (Paris-7), UFR de math\'ematiques, case 7012, b\^atiment Sophie-Germain, 75205 Paris
cedex 13, France}

\email{benaribi@math.jussieu.fr}
\begin{document}

\renewcommand{\proofname}{Proof}

\subjclass[2010]{57M25; 57M27}
\keywords{$L^2$-torsion; knot invariants; Fox calculus; cable knots; Alexander invariant}

\maketitle

\begin{abstract}
In this article, we present some of the properties of the $L^2$-Alexander invariant of a knot defined in \cite{LZ06a}, some of which are similar to those of the classical Alexander polynomial. Notably we prove that the $L^2$-Alexander invariant detects the trivial knot.

\end{abstract}

\renewcommand{\kbldelim}{(}
\renewcommand{\kbrdelim}{)}

\section{Introduction}

In 1923, Alexander introduced the first polynomial invariant of knots. It was nothing short of a revolution, since this invariant was easy to compute and powerful enough to distinguish most of the tabulated prime knots. However, the Alexander polynomial is not a complete invariant, not even among prime knots. In particular it does not detect the unknot.

In 1976, Atiyah laid the foundations of the theory of $L^2$-invariants. The idea is roughly the following: algebraic topology has many invariants that involve finite dimensional vector spaces and linear maps; by doing similar processes with infinite dimensional Hilbert spaces - like $\ell^2(G)$ where $G$ is a group - and operators on these spaces, we obtain the so-called $L^2$-invariants.

In the nineties, Carey-Mathai, Lott, Lück-Rothenberg,
and Novikov-Shubin developed the theory of $L^2$-torsions, an $L^2$-analog of the Reidemeister torsion theory.

Finally, in 2006, Li and Zhang introduced the $L^2$-Alexander invariant, an analog of the Alexander polynomial, and proved its relation with the $L^2$-torsion of the knot exterior.

In this article, we prove that the $L^2$-Alexander invariant for knots detects the unknot, in the following theorem.

\begin{thm} \label{main thm} [Main theorem] 
Let $K$ be a knot in $S^3$.
The $L^2$-Alexander invariant of $K$ is trivial, i.e.
 $\left (t \mapsto \Delta_K^{(2)}(t) \right ) = (t \mapsto 1)$, if and only if $K$ is the trivial knot.
\end{thm}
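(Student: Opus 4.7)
The forward direction is immediate: if $K$ is the unknot, then $\pi_1(S^3 \setminus K) \cong \Z$ admits a one-generator, no-relation presentation, so the defining Fox Jacobian is empty and $\Delta_K^{(2)}(t) = 1$ for every $t > 0$.

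For the converse, the plan is a two-step reduction organised by the JSJ decomposition of the knot exterior $E_K$. \emph{Step 1 (reduction to graph knots).} I would invoke the relation, implicit in the Li--Zhang definition via Fuglede--Kadison determinants, between $\Delta_K^{(2)}(1)$ and the $L^2$-torsion $\rho^{(2)}(E_K)$. By the L\"uck--Schick theorem, for a compact orientable irreducible $3$-manifold with toroidal boundary, $\rho^{(2)}$ is a negative universal constant times the sum of hyperbolic volumes of the hyperbolic pieces in the JSJ decomposition. Hence from $\Delta_K^{(2)}(1) = 1$ alone one deduces that $E_K$ contains no hyperbolic JSJ piece, i.e.\ $K$ is a graph knot.

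\emph{Step 2 (non-triviality for graph knots).} Every non-trivial prime graph knot in $S^3$ is obtained by iterated cabling starting from a torus knot, so there is a natural induction on cable depth. For the base case $T(p,q)$ with $p, q \ge 2$, I would use the presentation $\langle a, b \mid a^p = b^q \rangle$ and explicit Fox calculus to compute $\Delta_{T(p,q)}^{(2)}(t)$ and check that it is not identically $1$, for instance by tracking its behaviour as $t \to \infty$. The inductive step would rely on a cabling formula (of the flavour of Proposition \ref{prop groupe cable}) expressing $\Delta^{(2)}$ of a cable as the product of the companion's invariant with an explicit non-trivial factor; connect sums would be handled by a separate multiplicativity statement.

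The main obstacle is Step 2. Step 1 black-boxes the L\"uck--Schick volume formula and eliminates in one stroke every knot whose exterior has a hyperbolic JSJ piece, but it leaves the entire Seifert/graph world untouched. In Step 2 one must manipulate Fuglede--Kadison determinants over $\ell^2$ of graph-manifold fundamental groups and check that the $t$-dependence actually survives the determinant, which is not automatic: Fuglede--Kadison determinants frequently collapse cancellations that are visible at the matrix level.
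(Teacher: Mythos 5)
Your proposal is essentially the paper's proof. Step 1 is exactly Theorem \ref{thm L2 volume} (the value at $t=1$ equals $\exp\left(\tfrac{1}{6\pi}\|M_K\|\right)$, so $\Delta_K^{(2)}(1)=1$ forces the exterior to be a graph manifold), and Step 2 is the paper's induction over the class of knots generated from the unknot by connected sums and cablings, using the multiplicativity and cabling formulas of Theorems \ref{L2 somme} and \ref{L2 cable}.

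Two details of your Step 2 need adjusting when you execute it. First, a prime graph knot need not be an iterated cable of a torus knot: the companion at some stage may itself be a connected sum, so the induction has to run over the class generated by the unknot under \emph{both} operations simultaneously (this is how the paper uses \cite[Lemma 5.5]{Mur}); torus knots then require no separate base case, being cables of the unknot, and their value $\max(1,t)^{(|p|-1)(|q|-1)}$ is already recorded in Theorem \ref{thm pas wirtinger}. Second, the cabling factor $\max(1,t)^{(|p|-1)(|q|-1)}$ is \emph{trivial} whenever $|q|=1$, so ``product with an explicit non-trivial factor'' is not the right inductive hypothesis; what the paper propagates is the exact shape $\Delta_K^{(2)}(t)=\max(1,t)^{n_K}$ together with the recursions $n_{K_1\sharp K_2}=n_{K_1}+n_{K_2}$ and $n_S=|p|\,n_C+(|p|-1)(|q|-1)$, from which $n_K=0$ forces $K=O$, since a non-trivial cabling has $|p|\geq 2$ and a genuine companion has $n_C>0$ by induction. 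Your concern that the $t$-dependence might collapse inside the Fuglede--Kadison determinant is resolved precisely by these closed formulas.
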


This theorem is proven by using the well-known fact (cf \cite{Mur}) that a knot exterior either has nonzero Gromov norm or is a graph manifold, and that in this second case the knot is obtained from the trivial knot by connected sums and cablings. In the first case, a theorem of Lück helps us conclude, and the second case is treated with help from the following connected sum and cabling formulas for the $L^2$-Alexander invariant.

\begin{thm} \label{formulas thm}
(1) The $L^2$-Alexander invariant is multiplicative under the connected sum of knots.

(2) The $L^2$-Alexander invariant satisfies the following cabling formula:

 if $S$ is the $(p,q)$-cable knot of companion knot $C$, then
$$\Delta_S^{(2)}(t) = \Delta_C^{(2)}(t^p) \max(1,t)^{(|p|-1) (|q|-1)}.$$
\end{thm}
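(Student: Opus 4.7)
Both formulas reflect a geometric decomposition of the knot exterior, and my plan is in each case to translate the decomposition into algebra by picking a presentation of the knot group that makes this decomposition explicit, observing that the Fox Jacobian splits into a block upper-triangular form, and invoking the multiplicativity of the Fuglede--Kadison determinant on block-triangular operators. A standing ingredient is that if $H \hookrightarrow G$ is an inclusion of groups, then the $L^2$-determinant of a matrix over $\Z[H]$ is the same whether taken over $\NN(H)$ or over $\NN(G)$; this ``subgroup invariance'' will let us compute each diagonal block in the smaller von Neumann algebra where it is naturally defined.

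For part (1), I would start from Wirtinger presentations $G_{K_i} = \langle g_1^{(i)}, \ldots, g_{n_i}^{(i)} \mid r_1^{(i)}, \ldots, r_{n_i-1}^{(i)} \rangle$ for $i = 1, 2$, chosen so that $g_1^{(1)}$ and $g_1^{(2)}$ are meridians. The group $G_{K_1 \# K_2}$ is then the amalgamated product of $G_{K_1}$ and $G_{K_2}$ along their meridian subgroups, presented by the union of the two presentations together with the relation $g_1^{(1)} = g_1^{(2)}$. Deleting from the associated Fox Jacobian the column corresponding to the shared meridian yields a square block upper-triangular matrix whose two diagonal blocks are exactly the Fox matrices used to compute $\Delta_{K_1}^{(2)}(t)$ and $\Delta_{K_2}^{(2)}(t)$, since $G_{K_1}$ and $G_{K_2}$ both embed into $G_{K_1 \# K_2}$ through the amalgam. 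Combining block-triangular multiplicativity with subgroup invariance gives the claimed identity.

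For part (2), the cable exterior decomposes along the JSJ torus $T = \partial N(C)$ as $E(S) = E(C) \cup_T W$, where $W \subset N(C)$ is the exterior of the $(p,q)$-torus knot inside the solid torus $N(C)$; at the group level this becomes $G_S = G_C \ast_{\Z^2} \pi_1(W)$. The key abelianization fact is that the meridian $\mu_C$ of $C$ has linking number $p$ with $S$, so $[\mu_C] = p \cdot [\mu_S]$ in $H_1(E(S)) \cong \Z$; consequently, in any Fox-calculus computation on the $G_C$-side the variable $t$ gets replaced by $t^p$. I would assemble compatible presentations of $G_C$ and $\pi_1(W)$ that share the peripheral $\Z^2$, again obtain a block upper-triangular Fox Jacobian, and read off two contributions: the companion block produces $\Delta_C^{(2)}(t^p)$ thanks to the shift $t \mapsto t^p$, while the block associated with $W$ computes the $L^2$-Alexander invariant of the $(p,q)$-torus knot piece, a Seifert fibered manifold whose computation should yield $\max(1, t)^{(|p|-1)(|q|-1)}$, consistent with the fact that $(|p|-1)(|q|-1)/2$ is the Seifert genus of the torus knot.

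The most delicate part will be the Fuglede--Kadison determinant computation on the Seifert fibered piece $W$, together with the verification of the determinant-class and $L^2$-acyclicity hypotheses needed to legitimately split the determinant according to the block decomposition of the Fox Jacobian in the ambient von Neumann algebra $\NN(G_S)$. Once those hypotheses are in place and the torus-knot piece has been computed, the cabling formula assembles itself from the two block contributions.
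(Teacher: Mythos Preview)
Your proposal is correct and follows essentially the same route as the paper: amalgamated-product presentations yielding block-triangular Fox matrices, combined with subgroup invariance of the Fuglede--Kadison determinant and the abelianization shift $t\mapsto t^p$ on the companion side. The paper fills in exactly the details you flag as delicate by writing down an explicit deficiency-one presentation of $G_S$ (Proposition~\ref{prop groupe cable}), computing the torus-knot block via the factorization $(Id - t^qR_x)\circ(Id + t^qR_x + \cdots + t^{q(|p|-1)}R_{x^{|p|-1}}) = Id - t^{q|p|}R_{x^{|p|}}$ together with Proposition~\ref{prop id - tg} (Lemma~\ref{lem 1+t+t2}), and invoking Theorem~\ref{thm pas wirtinger} to normalize correctly since the resulting presentation of $G_S$ is not Wirtinger.
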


These results were previously announced in \cite{BA}.

The article is organized as follows:
Section \ref{section preliminaries} reviews some well-known facts about knots, groups, and $L^2$-invariants, 
Sections \ref{section composite} and \ref{section cabling} prove the first and second parts of Theorem \ref{formulas thm}, 
Section \ref{section unknot} proves Theorem \ref{main thm}.
Section \ref{section} deals with the proof of the technical Proposition \ref{prop groupe cable}. Finally in section \ref{section open} we mention some open questions and research directions about the invariant.

\section*{Acknowledgements}

I would like to offer my special thanks to my advisor J\'er\^ome Dubois, who taught me all about $L^2$-invariants and always showed me the right mathematical direction to explore.

\section{Preliminaries} \label{section preliminaries}

\subsection{From knots to group presentations}

We choose an orientation for $S^3$.
All knots will be assumed oriented, and considered up to (orientation-preserving) isotopy in $S^3$. A link with $c \in \N$ components will be called a \textit{$c$-link}.

Let $K$ be an oriented knot in $S^3$, and $V(K)$ an open tubular neighbourhood of $K$. The exterior of $K$ is $M_K = S^3 \setminus V(K)$ and is a compact $3$-manifold with toroidal boundary. We fix a base point $pt$ in $M_K$. The orientation of $M_K$ comes from the one of $S^3$, and does not depend on the orientation of $K$.

Besides, since $K$ is oriented, there are, up to isotopy, unique simple closed curves $\mu_K$ and $\lambda_K$ on the $2$-torus $\partial M_K = \partial V(K)$ such that $\mu_K$ bounds a disk in $V(K)$ and $\lambda_K$ is homologous to $K$ in $V(K)$. We choose an orientation for these two curves such that the linking number between $\mu_K$ and $K$ and the intersection number between $\mu_K$ and $\lambda_K$ are both $+1$. We call $(\mu_K,\lambda_K)$ a preferred meridian-longitude pair for $K$. (Here we have used the notations and definitions of \cite{Tsau}).

Let us now consider the knot group $G_K = \pi_1(M_K, pt)$. 
We will call \textit{meridian loops} the elements of $G_K$ that are the homotopy classes of meridian curves.
The abelianization of $G_K$ is the infinite cyclic group. There are therefore exactly two surjective group homomorphisms from $G_K$ to $\Z$. We will write $\alpha_K\colon G_K\to \Z$ the one that sends meridian loops to $1$. Note that this choice depends strongly on the orientation of $K$.

When considering a group presentation $P = \langle g_1, \ldots , g_k | r_1 , \ldots , r_l \rangle$, it is usual to assimilate the combinatoric $(k+l)$-tuple and the generated group. In this article, we will use the first convention, and we would denote $Gr(P)$ the quotient of the free group $\F[g_1, \ldots, g_k ]$ by its normal subgroup generated by the free words $r_1, \ldots, r_l$.
We will say that a group \textit{$G$ admits the presentation $P= \langle g_1, \ldots , g_k | r_1 , \ldots , r_l \rangle$} when $G$ is isomorphic to $Gr(P)$, and we will assume that this isomorphism is implicit, or equivalently that we implicitly know which elements of $G$ are associated to $g_1, \ldots, g_k$.

For instance, the well-known Wirtinger process takes a regular diagram $D$ of a knot $K$ and gives a deficiency one group presentation $P$ of the knot group $G_K$, and the generators of $P$ all implicitly correspond to meridian loops in $G_K$; therefore they are all sent to the same image $1$ by the abelianization $\alpha_{K}$.

Let $p$ and $q$ be relatively prime integers, and let $V$ be a solid torus with a preferred meridian-longitude system (and thus an oriented core). The knot $T(p,q)$ on the boundary $\partial V$ of $V$ will denote the knot that wraps around $V$ $q$ times in the meridional direction and $p$ times in the longitudinal direction; it will be called the $(p,q)$-torus knot. Note that this follows the conventions of \cite{Rol} but not the ones of \cite{BZ} and \cite{Cro}, where the roles of $p$ and $q$ are reversed.

\subsection{Satellite knots}

Since we will use satellite and cable knots somewhat intensively in Section \ref{section cabling} and Section \ref{section}, we recall some definitions and fix some notations. We use the notations of \cite[Section 4]{Cro}.

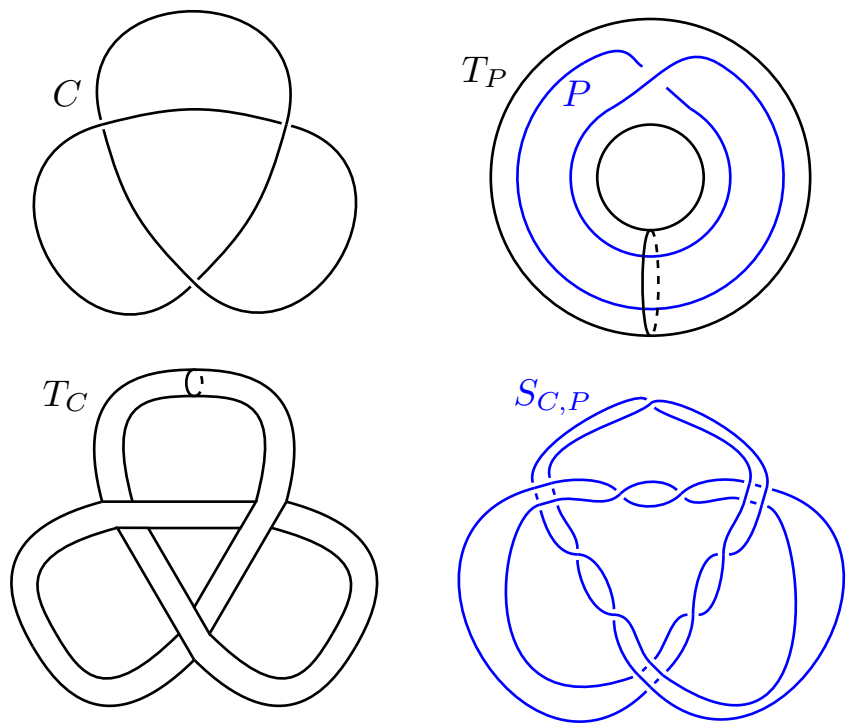
\begin{figure}[h]
\centering
\begin{tikzpicture} [every path/.style={string } , every node/.style={transform shape , knot crossing , inner sep=1.5 pt } ] 

\begin{scope}[xshift=-1cm,scale=0.7]
	\node[rotate=150] (tl) at (-1.73,1) {};
	\node[rotate=30] (tr) at (1.73, 1 ) { } ;
	\node[rotate=0] (b) at (0 ,-2) { } ;
	\coordinate (n) at (-2,1.2) ;
	\draw[scale=2] (n) node[above left]{$C$} ;

\draw (tl.center) .. controls (tl.16 south west) and (tr.16 north west) .. (tr) ;
\draw (tl) .. controls (tl.32 south east) and (tr.32 north east) .. (tr.center) ;
\draw (tl) .. controls (tl.16 north west) and (b.16 north west) .. (b.center) ;
\draw (tl.center) .. controls (tl.32 north east) and (b.32 south west) .. (b) ;
\draw (b) .. controls (b.16 north east) and (tr.16 south west) .. (tr.center) ;
\draw (b.center) .. controls (b.32 south east) and (tr.32 south east) .. (tr) ;
\end{scope}

\begin{scope}[xshift=5cm,scale=0.7]
	\coordinate (n) at (-1,1.2) ;
	\draw[scale=2] (n) node[blue, above left]{$P$} ;
	\coordinate (m) at (-2.6,1.5) ;
	\draw[scale=2] (m) node[above left]{$T_P$} ;
	\coordinate (O) at (0,0) ;
	\coordinate (A) at (0.75,1.3) ;
	\coordinate (B) at (1.25,2.17) ;
	\coordinate (C) at (-0.75,1.3) ;
	\coordinate (D) at (-1.25,2.17) ;	
	\coordinate (G) at (0,-1) ;
	\coordinate (H) at (0,-3) ;
	\coordinate (I) at (-0.15,2.1) ;
	\draw (O) circle (1) ;
	\draw (O) circle (3) ;	
	\draw[color=blue] (A) arc (60:-240:1.5) ;
	\draw[color=blue] (C) ..controls +(0.87,0.5) and +(0.8*-0.87,0.8*0.5).. (B) ;
	\draw[color=blue] (B) arc (60:-240:2.5) ;
	\draw[color=blue] (D) ..controls +(0.87,0.5) and +(-1*0.2,0.2*0.44)..(I)  ;
	\draw[color=blue] (A) -- ++(-0.9*0.5,0.5*0.8) ;
	\draw (G) ..controls +(-0.2,0) and +(-0.2,0).. (H) ;
	\draw [dashed] (G) ..controls +(0.2,0) and +(0.2,0).. (H) ;
\end{scope}

\begin{scope}[xshift=-1cm,yshift=-5cm,scale=0.7]
	\coordinate (n) at (-1.9,2.5) ;
	\draw[scale=2] (n) node[above left]{$ T_C $} ;
	\coordinate (O) at (0,0) ;
	\coordinate (A) at (-0.87,0.5) ;
	\coordinate (B) at (-1.73,1) ;
	\coordinate (C) at (-1.16,1) ;
	\coordinate (D) at (0.87,0.5) ;
	\coordinate (E) at (1.73,1) ;
	\coordinate (F) at (1.16,1) ;
	\coordinate (G) at (0,3) ;
	\coordinate (H) at (0,3.5) ;
	\draw (A) -- (D) -- (F) -- (B) ;
	\draw (F) ..controls +(0.5*1,0.5*3.73) and +(1,0).. (G) ;
	\draw (C) ..controls +(-0.5*1,0.5*3.73) and +(-1,0).. (G) ;
	\draw (E) ..controls +(0.6*1,0.6*3.73) and +(1,0).. (H) ;
	\draw (B) ..controls +(-0.6*1,0.6*3.73) and +(-1,0).. (H) ;
	\draw (G) ..controls +(-0.2,0) and +(-0.2,0).. (H) ;
	\draw [dashed] (G) ..controls +(0.2,0) and +(0.2,0).. (H) ;
\begin{scope}[rotate=120]
	\coordinate (O) at (0,0) ;
	\coordinate (A) at (-0.87,0.5) ;
	\coordinate (B) at (-1.73,1) ;
	\coordinate (C) at (-1.16,1) ;
	\coordinate (D) at (0.87,0.5) ;
	\coordinate (E) at (1.73,1) ;
	\coordinate (F) at (1.16,1) ;	
	\coordinate (G) at (0,3) ;
	\coordinate (H) at (0,3.5) ;
	\draw (A) -- (D) -- (F) -- (B) ;
	\draw (F) ..controls +(0.5*1,0.5*3.73) and +(1,0).. (G) ;
	\draw (C) ..controls +(-0.5*1,0.5*3.73) and +(-1,0).. (G) ;
	\draw (E) ..controls +(0.6*1,0.6*3.73) and +(1,0).. (H) ;
	\draw (B) ..controls +(-0.6*1,0.6*3.73) and +(-1,0).. (H) ;
\end{scope}
\begin{scope}[rotate=240]
	\coordinate (O) at (0,0) ;
	\coordinate (A) at (-0.87,0.5) ;
	\coordinate (B) at (-1.73,1) ;
	\coordinate (C) at (-1.16,1) ;
	\coordinate (D) at (0.87,0.5) ;
	\coordinate (E) at (1.73,1) ;
	\coordinate (F) at (1.16,1) ;	
	\coordinate (G) at (0,3) ;
	\coordinate (H) at (0,3.5) ;
	\draw (A) -- (D) -- (F) -- (B) ;
	\draw (F) ..controls +(0.5*1,0.5*3.73) and +(1,0).. (G) ;
	\draw (C) ..controls +(-0.5*1,0.5*3.73) and +(-1,0).. (G) ;
	\draw (E) ..controls +(0.6*1,0.6*3.73) and +(1,0).. (H) ;
	\draw (B) ..controls +(-0.6*1,0.6*3.73) and +(-1,0).. (H) ;
\end{scope}

\end{scope}

\begin{scope}[xshift=5cm,yshift=-5cm,scale=0.8,color=blue]
	\coordinate (n) at (-1,2) ;
	\draw[scale=1.75] (n) node[blue, above left]{$S_{C,P}$} ;
	\node[rotate=150] (g1) at (-1.63,1.18) {};
	\node[rotate=150] (g2) at (-1.91,1.1) {};
	\node[rotate=150] (g3) at (-1.83,0.82) {};
	\node[rotate=150] (g4) at (-1.55,0.9) {};
	\node[rotate=30] (d1) at (1.63, 1.18 ) { } ;
	\node[rotate=30] (d2) at (1.55, 0.9 ) { } ;	
	\node[rotate=30] (d3) at (1.83, 0.82 ) { } ;
	\node[rotate=30] (d4) at (1.91, 1.1 ) { } ;
	\node[rotate=0] (b1) at (0 ,-1.8) { } ;
	\node[rotate=0] (b2) at (-0.2 ,-2) { } ;	
	\node[rotate=0] (b3) at (0 ,-2.2) { } ;
	\node[rotate=0] (b4) at (0.2 ,-2) { } ;
	\node[rotate=0] (h) at (0 ,2.5) { } ;	
\draw (g1) .. controls (g1.8 south east) and (h.4 south west) .. (h.center) ;
\draw (g2) .. controls (g2.8 south east) and (h.4 north west) .. (h) ;
\draw (d4.center) .. controls (d4.8 north east) and (h.4 north east) .. (h.center) ;
\draw (d1.center) .. controls (d1.8 north east) and (h.4 south east) .. (h) ;

\draw (g2.center) .. controls (g2.32 north east) and (b3.32 south west) .. (b3) ;
\draw (g3.center) .. controls (g3.8 north east) and (-3,-3) .. (b2) ;

\draw (g2.center) -- (g1.center) ;
\draw (g3.center) -- (g4.center) ;
\draw (g1)-- (g4);
\draw (g2) -- (g3) ;

\draw (b3.center) .. controls (b3.32 south east) and (d4.32 south east) .. (d4) ;
\draw (b4.center) .. controls (3,-4) and (d3.8 south east) .. (d3) ;
\draw (d2.center) -- (d1.center) ;
\draw (d3.center) -- (d4.center) ;
\draw (d1)-- (d4);
\draw (d2) -- (d3) ;
\draw (b1.center) -- (b4.center) ;
\draw (b2.center) -- (b3.center) ;
\draw (b1)-- (b2);
\draw (b3) -- (b4) ;

	\node (tm1) at (-0.5,1) {};
	\node (tm2) at (0.5,1) {};

\draw (g1.center) .. controls (g1.4 south west) and (tm1.4 north west) .. (tm1) ;
\draw (tm1) .. controls (tm1.4 south east) and (tm2.4 south west) .. (tm2.center) ;
\draw (tm2.center) .. controls (tm2.4 north east) and (d1.4 north west) .. (d1) ;

\draw (g4.center) .. controls (g4.4 south west) and (tm1.4 south west) .. (tm1.center) ;
\draw (tm1.center) .. controls (tm1.4 north east) and (tm2.4 north west) .. (tm2) ;
\draw (tm2) .. controls (tm2.4 south east) and (d2.4 north west) .. (d2) ;

	\node (tg1) at (-1.2,0) {};
	\node (tg2) at (-0.6,-1) {};

\draw (g4) .. controls (g4.4 north west) and (tg1.4 north) .. (tg1) ;
\draw (tg1) .. controls (tg1.4 south) and (tg2.4 west) .. (tg2.center) ;
\draw (tg2.center) .. controls (tg2.4 east) and (b1.4 north west) .. (b1.center) ;

\draw (g3) .. controls (g3.4 north west) and (tg1.4 west) .. (tg1.center) ;
\draw (tg1.center) .. controls (tg1.4 east) and (tg2.4 north) .. (tg2) ;
\draw (tg2) .. controls (tg2.4 south) and (b2.4 north west) .. (b2.center) ;

	\node (td1) at (1.2,0) {};
	\node (td2) at (0.7,-1) {};

\draw (d3.center) .. controls (d3.4 south west) and (td1.4 east) .. (td1) ;
\draw (td1) .. controls (td1.4 west) and (td2.4 north) .. (td2.center) ;
\draw (td2.center) .. controls (td2.4 south) and (b4.4 north east) .. (b4) ;

\draw (d2.center) .. controls (d2.4 south west) and (td1.4 north) .. (td1.center) ;
\draw (td1.center) .. controls (td1.4 south) and (td2.4 east) .. (td2) ;
\draw (td2) .. controls (td2.4 west) and (b1.4 north east) .. (b1) ;

\end{scope}

\end{tikzpicture}

\caption{The $(2,-1)$-cabling of the trefoil knot} \label{fig cable trefoil}
\end{figure}

Let $C$ be a non-trivial knot in $S^3$ (it will be called the \textit{companion knot}).

We consider $P$ a knot inside an open solid torus $T_P$, $T_P$ being also embedded in $S^3$ ($P$ will be called the \textit{pattern knot}). 
We choose an orientation for the core of $T_P$.
We assume that
$P$ meets every meridional disk of $T_P$.
We let $n_P \in \Z$ denote the linking number between $P$ and a preferred meridian curve of $\partial T_P$ (assumed to be positively oriented with the orientation of the core of $T_P$). Note that preferred longitude curves of $T_P$ have zero linking number with the core of $T_P$ and follow the same direction.

Let $T_C$ be an open tubular neigborhood of $C$ (its core having the same orientation as $C$).
Notice that a preferred longitude curve of $T_C$ has zero linking number with $C$. Thus the homotopy class in $G_C$ of such a curve is sent to zero by the abelianization $\alpha_C$.

Let $h_{PC}\colon T_P \rightarrow T_C$ be an orientation-preserving homeomorphism between the two solid tori. We also assume that $h_{PC}$ sends a preferred meridian-longitude pair of $T_P$ to a preferred meridian-longitude pair of $T_C$.

Then $S_{C,P}:= h_{PC}(P)$ is a knot in $S^3$ and is called \textit{the satellite knot of companion $C$ and pattern $P$}.

Let us mention two particular cases of satellites: the cables and the Whitehead doubles.

If $P$ is a torus knot $T(p,q)$ (naturally defined on the boundary of a solid sub-torus of $T_P$), then we call $S_{C,P}$ a \textit{cable knot}, or \textit{the $(p,q)$-cable of $C$}. In this case $n_P = p$.

Figure \ref{fig cable trefoil} gives an example of $S_{C,P}$ when $C$ is the trefoil knot and $P$ is the torus knot pattern $T(2,-1)$. The orientations are not marked but should be clear.

\begin{figure}[h]
\centering
\begin{tikzpicture} [every path/.style={string } , every node/.style={transform shape , knot crossing , inner sep=1.5 pt } ] 
\begin{scope}[xshift=5cm,scale=1]
	\coordinate (n) at (-1,1.2) ;
	\coordinate (m) at (-2.6,1.5) ;
	\coordinate (O) at (0,0) ;
	\node (A) at (0.75,1.3) {};
	\node (B) at (1.25,2.17) {};
	\node (b) at (0,1.7) {};
	\node (h) at (0,2.2) {};
	\node (C) at (-0.75,1.3) {};
	\node (D) at (-1.25,2.17) {};	
	\coordinate (G) at (0,-1) ;
	\coordinate (H) at (0,-3) ;
	\coordinate (I) at (-0.15,2.1) ;
	\draw (O) circle (1) ;
	\draw (O) circle (3) ;	
	\draw[<-][color=blue] (A) arc (60:-240:1.5) ;
	\draw[->][color=blue] (B) arc (60:-240:2.5) ;
	\draw[color=blue] (A.center) .. controls (A.2 north west) and (b.4 south east) .. (b) ;
	\draw[color=blue] (b) .. controls (b.4 north west) and (h.4 south west) .. (h.center) ;
	\draw[->][color=blue] (h.center) .. controls (h.4 north east) and (B.2 north west) .. (B.center) ;
	\draw[<-][color=blue] (C.center) .. controls (C.2 north east) and (b.4 south west) .. (b.center) ;
	\draw[color=blue] (b.center) .. controls (b.4 north east) and (h.4 south east) .. (h) ;
	\draw[color=blue] (h) .. controls (h.4 north west) and (D.2 north east) 
.. (D.center) ;
	\draw (G) ..controls +(-0.2,0) and +(-0.2,0).. (H) ;
	\draw [dashed] (G) ..controls +(0.2,0) and +(0.2,0).. (H) ;
\end{scope}
\end{tikzpicture}

\caption{The Whitehead double pattern} \label{fig whitehead pattern}
\end{figure}
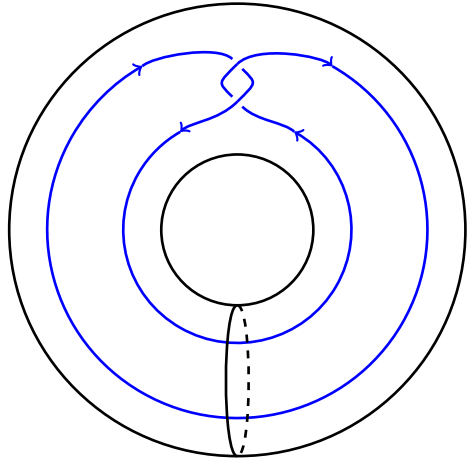

If $P$ is the pattern in Figure \ref{fig whitehead pattern}, called the \textit{Whitehead double pattern}, then $n_P=0$ and $S_{C,P}$ is called the \textit{Whitehead double of $C$}.

\subsection{Connected sum, cabling, and groups}

Here we state some useful results about how the connected sum and cabling operations affect the knot groups.

\begin{prop} \label{prop Amalgame}

Let $K_1$, $K_2$ be two knots and $K$ their connected sum. We let $G_1, G_2, G$ denote their respective knot groups. Then
$G_1$ and $G_2$ have Wirtinger presentations $P_1 = \langle x_1 , \ldots , x_k | r_1 , \ldots, r_{k-1}   \rangle$, 
$P_2 = \langle y_1 , \ldots , y_l | s_1 , \ldots, s_{l-1}   \rangle$  such that
$$P = \langle x_1 , \ldots , x_k, y_1 , \ldots , y_l | r_1 , \ldots, r_{k-1}, s_1, \ldots, s_{l-1}, x_k y_l^{-1}   \rangle$$
is a Wirtinger presentation of $G$.
\end{prop}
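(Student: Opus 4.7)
The plan is to construct the standard connected-sum diagram of $K$ from Wirtinger-presenting diagrams of $K_1$ and $K_2$, read off its Wirtinger presentation, and reduce to $P$ via Tietze transformations. Equivalently, one may apply Seifert--van Kampen to the splitting of $M_K$ induced by the canonical separating 2-sphere (meeting $K$ transversely in two points) and identify the resulting amalgamated product $G_1 \ast_{\langle m \rangle} G_2$ with $P$, where the amalgamation is over the infinite cyclic subgroup generated by a meridian $m$ common to $K_1$ and $K_2$; this is the geometric content underlying the combinatorial argument.

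Concretely, I would start with diagrams $D_1$, $D_2$ giving Wirtinger presentations $P_1$, $P_2$ of $G_1$, $G_2$, with labelings chosen so that the arcs $x_k$ and $y_l$ lie at specific positions near which the connected sum is performed. The diagram $D$ of $K$ is obtained by placing $D_1$ and $D_2$ side by side, deleting short sub-arcs inside $x_k$ and $y_l$ near their terminal under-crossings, and rejoining the four loose endpoints by two non-crossing parallel strands.

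Diagram $D$ has $k+l$ arcs and $k+l$ crossings. After relabeling the two bridging arcs as $x_k$ and $y_l$ (each being mostly composed of the original arc it is named after, extended by one connecting strand and a small remnant of the other original arc), the crossings of $D$ coincide with those of $D_1 \sqcup D_2$, and the Wirtinger relations all agree with the $r_i$, $s_j$ coming from $P_1$, $P_2$ --- except at the two crossings that were the terminal under-crossings of the original $x_k$ and $y_l$, where the incoming-arc label is swapped: $x_k \leftrightarrow y_l$. Dropping one of these two (redundant) Wirtinger relations and keeping the other yields a presentation of $G$ with a single modified relation $r_k'$ (equal to $r_k$ with $x_k$ replaced by $y_l$ in the incoming position) alongside the unchanged $r_1, \ldots, r_{k-1}, s_1, \ldots, s_{l-1}$.

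The main obstacle is then the Tietze equivalence between this presentation and $P$. The key observation is that $r_k$ already holds in the group presented by $r_1, \ldots, r_{k-1}$ (the redundancy of the last Wirtinger relation in $G_1$), so in the composite group both $r_k$ and $r_k'$ hold; being two Wirtinger relations at the same crossing that differ only by the substitution of $x_k$ for $y_l$ in one position, they together force $x_k = y_l$. Conversely, given $r_k$, the extra relation $x_k y_l^{-1} = 1$ is equivalent to $r_k'$. Hence the two presentations have the same normal closure of relations, and $P$ is indeed a Wirtinger presentation of $G$.
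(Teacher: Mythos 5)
Your argument is correct. The paper itself disposes of this proposition in one sentence, by applying the Seifert--van Kampen theorem to the decomposition of $S^3\setminus K$ into two half-spaces $U_1,U_2$, each containing one summand, whose intersection is a slab minus an unknotted arc --- a solid torus whose meridian generator is identified with $x_k$ in $G_1$ and with $y_l$ in $G_2$; this is exactly the amalgamated-product picture you sketch in your opening paragraph. Your main, detailed argument is instead purely diagrammatic: construct the connected-sum diagram, read off its Wirtinger presentation, note that the only altered relators are the two at the terminal under-crossings of $x_k$ and $y_l$ (where the incoming under-arc label is swapped), and then use the standard redundancy of the omitted Wirtinger relation $r_k$ to convert the modified relator $r_k'$ into $x_k y_l^{-1}$ --- indeed $r_k(r_k')^{-1}$ is a conjugate of $x_ky_l^{-1}$, so the two sets of relators have the same normal closure. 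Both routes are valid; yours has the advantage of exhibiting $P$ as Tietze-equivalent to a genuine Wirtinger presentation of an actual diagram of $K$, which fully justifies the word \emph{Wirtinger} in the conclusion (the paper only ever uses it through the property that all generators are meridians), at the cost of some bookkeeping about where the summing band is attached; the Seifert--van Kampen route is shorter but leaves the identification of the amalgamated product's presentation with the stated one implicit. Two small points you should make explicit: the cut points must lie on arcs adjacent to the outer face of each diagram so that the two bridging strands introduce no new crossings (this is arranged by the freedom in choosing $D_1$, $D_2$ and the labels $x_k$, $y_l$), and the omitted relators of $P_1$, $P_2$ must be chosen to be precisely the ones at the terminal under-crossings of $x_k$ and $y_l$, which again is part of that same freedom.
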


This proposition is a consequence of the Seifert-van Kampen theorem. The associated partition (two open sets $U_1$ and $U_2$, their union and their intersection) can be seen in Figure \ref{figure vankampen partition}.

\begin{figure}[h]
\centering

\begin{tikzpicture}[every path/.style={string ,black} , every node/.style={transform shape , knot crossing , inner sep=1.5 pt } ]

\begin{scope}[xshift=0cm,scale=0.7]
\draw [gray!20,fill=gray!20] (-3,0.6) -- (-3,4.5) -- (2.5,4.5) -- (2.5,1.1) -- (2,0.6) -- cycle ;
\draw [gray!30,fill=gray!30] (-3,-0.6) -- (-3,-4.5) -- (2.5,-4.5) -- (2.5,-0.1) -- (2,-0.6) -- cycle ;
\draw [gray!50,fill=gray!50] (-3,0.6) -- (-3,-0.6) -- (2,-0.6) -- (2.5,-0.1) -- (2.5,1.1) -- (2,0.6) -- cycle ;

\draw (1,0) node[right]{$pt$} ;
\draw (1,0) node {$\bullet$} ;
\draw (0,4.4) node[right]{$\infty$} ;
\draw (0,4.4) node {$\bullet$} ;
\draw (0,-4.4) node[right]{$\infty$} ;
\draw (0,-4.4) node {$\bullet$} ;
\coordinate (a) at (0,0.6) ;
\draw (a) node[above right]{$A$} ;
\draw (a) node {$\bullet$} ;
\coordinate (b) at (0,-0.6) ;
\draw (b) node[above right]{$B$} ;
\draw (b) node {$\bullet$} ;
\draw (a) -- (b) ;
\draw (a) -- ++(2,0) -- ++(0.5,0.5) ;
\draw (a) -- ++(-3,0) -- ++(0.5,0.5) ;
\draw (b) -- ++(2,0) -- ++(0.5,0.5) ;
\draw (b) -- ++(-3,0) -- ++(0.5,0.5) ;
\draw [<->] (-1,0.5)--(-1,-4.5);
\draw [<->] (-2,-0.5)--(-2,4.5);
\draw (-2,2) node[left]{$U_1$} ;
\draw (-1,-2) node[left]{$U_2$} ;
	\begin{scope}[rotate=-90, xshift=-2.5cm]
		\node[rotate=0] (d) at (-2,0) { } ;	
		\node[rotate=45] (a) at (-0.7,0) { } ;		
		\node[rotate=0] (b) at (0,0.5) { } ;		
		\node[rotate=45] (c) at (0.7,0) { } ;		
		\node[rotate=0] (f) at (2,0) { } ;		
	\draw (d) .. controls (-1,0) and (a.4 north west) .. (a.center) ;
	\draw (b) .. controls (b.4 south west) and (a.4 south east) .. (a.center) ;
	\draw (b) .. controls (b.4 north east) and (c.4 north east) .. (c.center) ;
	\draw (a) .. controls (a.4 south west) and (c.4 south west) .. (c.center) ;
	\draw (a) .. controls (a.4 north east) and (b.4 north west) .. (b.center) ;
	\draw (c) .. controls (c.4 north west) and (b.4 south east) .. (b.center) ;
	\draw (c) .. controls (c.4 south east) and (1,0) .. (f) ;
	\end{scope}

	\begin{scope}[rotate=-90, xshift=2.5cm]
		\node[rotate=0] (d) at (-2,0) { } ;	
		\node[rotate=45] (a) at (-1,0) { } ;		
		\node[rotate=0] (b) at (-0.2,0.4) { } ;		
		\node[rotate=0] (e) at (0.2,-0.4) { } ;		
		\node[rotate=45] (c) at (1,0) { } ;		
		\node[rotate=0] (f) at (2,0) { } ;		
	\draw (d) .. controls (-1,0) and (a.4 north west) .. (a) ;
	\draw (b.center) .. controls (b.4 south west) and (a.4 south east) .. (a) ;
	\draw (b.center) .. controls (b.4 north east) and (c.4 north east) .. (c) ;
	\draw (e.center) .. controls (e.4 south east) and (c.4 south west) .. (c) ;
	\draw (b) .. controls (b.4 south east) and (e.4 north west) .. (e.center) ;
	\draw (b) .. controls (b.4 north west) and (a.4 north east) .. (a.center) ;
	\draw (e) .. controls (e.4 south west) and (a.4 south west) .. (a.center) ;
	\draw (e) .. controls (e.4 north east) and (c.4 north west) .. (c.center) ;
	\draw (c.center) .. controls (c.4 south east) and (1,0) .. (f) ;
	\end{scope}
\end{scope}
\end{tikzpicture}

\caption{The Seifert-van Kampen partition, $K_1$ is the trefoil, $K_2$ the figure-eight} \label{figure vankampen partition}
\end{figure}
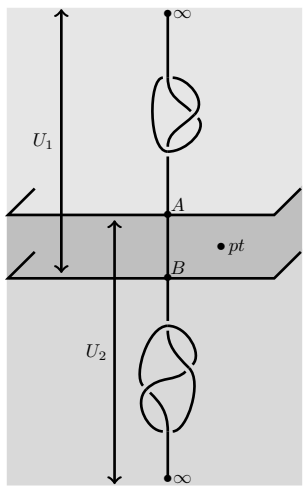

\begin{prop} \label{prop groupe cable}
Let us consider the $(p,q)$-cable knot $S$ of companion $C$. 

(1) There exists $P_C = \langle a_1 , \ldots , a_k | r_1 , \ldots, r_{k-1}   \rangle$ a Wirtinger presentation of $G_C$  such that
$$P_S = \langle a_1 , \ldots , a_k, x, \lambda | r_1 , \ldots, r_{k-1}, x^p  a_k^{-q} \lambda^{-p} ,
\lambda^{-1} W(a_i) 
   \rangle$$
is a presentation of $G_S$, with  $x$ and $\lambda$ the homotopy classes of the core and a longitude of $T_C$, and $W(a_i)$ a word in the $a_1, \ldots ,a_k$.

(2) Furthermore, $\alpha_{S}(x) = q$, $\alpha_{S}(\lambda) =0$ and $\alpha_{S}(a_i) = p$, for $i=1, \ldots , k$.

\end{prop}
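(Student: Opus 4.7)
The plan is to apply the Seifert--van Kampen theorem to $M_S$ along the torus $\partial T_C$. This torus separates $M_S$ into $M_C$ (the exterior of the companion $C$) and the cable space $T_C \setminus V(S)$, so the proof reduces to computing $\pi_1$ of each piece and amalgamating over $\pi_1(\partial T_C) = \Z^2$.

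First I would fix a regular diagram of $C$ and take the resulting Wirtinger presentation $\langle a_1, \ldots, a_k \mid r_1, \ldots, r_{k-1}\rangle$ of $G_C$, relabelling if necessary so that $a_k$ represents the preferred meridian $m_C$ of $C$. The preferred longitude $\ell_C$ is then a word $W(a_i)$ readable directly from the diagram.

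Next I would compute $\pi_1(T_C \setminus V(S))$ via a second Seifert--van Kampen argument along the torus $T_0 \subset T_C$ on which $S$ lies. This $T_0$ splits $T_C$ into an inner solid torus $V_0$ and a thickened torus $V_1 \cong T^2 \times [0,1]$. Since $S$ sits on the shared boundary $T_0$, removing $V(S)$ only carves a channel along each boundary without altering homotopy type: $\pi_1(V_0 \setminus V(S)) = \Z\langle x\rangle$ with $x$ the core class of $T_C$, $\pi_1(V_1 \setminus V(S)) = \Z^2\langle m_C, \ell_C\rangle$, and the intersection $T_0 \setminus V(S)$ is an annulus whose core $s$ is a parallel copy of $S$ on $T_0$. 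Since $S$ wraps $q$ times meridionally and $p$ times longitudinally on $T_0$, one has $s = m_C^q \ell_C^p$ in $\pi_1(V_1)$; in $\pi_1(V_0)$ the meridian $m_C$ bounds a disk and the longitude $\ell_C$ is homotopic to the core, so $s = x^p$. Amalgamating yields
\[
\pi_1(T_C \setminus V(S)) = \langle x, m_C, \ell_C \mid [m_C, \ell_C],\ x^p = m_C^q \ell_C^p \rangle.
\]

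For the final gluing, the peripheral inclusion $\pi_1(\partial T_C) \hookrightarrow G_C$ identifies $m_C$ with $a_k$ and $\ell_C$ with $W(a_i)$, while on the cable-space side these remain independent generators. Eliminating the redundant $m_C$, renaming $\ell_C$ as $\lambda$, and dropping the commutator $[a_k, W]$ (already implied by the $r_i$'s, since the peripheral subgroup of $G_C$ is abelian) would then yield exactly the presentation $P_S$ of part (1). For part (2) I would interpret $\alpha_S$ as the signed linking number with $S$: a meridian disk of $C$ meets $S$ in $p$ signed intersections, giving $\alpha_S(a_i) = p$; the loop $x$ is isotopic to $C$ and links $S$ exactly $q$ times by the meridional winding of the cable; and a Seifert surface of $C$ taken outside $T_C$ has boundary $\lambda$ and is disjoint from $S$, so $\alpha_S(\lambda) = 0$. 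The subtlest step to verify rigorously will be that the notched pieces $V_i \setminus V(S)$ really deformation retract onto $V_i$, together with the bookkeeping ensuring that the slope of $s$ on $T_0$ matches the paper's $(p,q)$-cable convention with the correct signs.
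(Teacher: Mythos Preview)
Your proposal is correct and follows essentially the same route as the paper: two nested Seifert--van Kampen applications (first along $\partial T_C$ to split off $M_C$, then along the torus carrying $S$ to compute the cable-space group), yielding the same presentation $\langle x, m_C, \ell_C \mid [m_C,\ell_C],\, x^p = m_C^q \ell_C^p\rangle$, followed by the same simplifications (substitute $m_C = a_k$, drop the peripheral commutator as redundant in $G_C$). The only organisational difference is that the paper first proves the cable-space computation abstractly as a lemma about the torus-knot pattern $T(p,q)$ inside a solid torus (Lemma~\ref{lem groupe Tpq}) and separately states a general satellite-gluing lemma (Lemma~\ref{lem groupe sat}) valid for any pattern, then specialises; you do both steps directly in the companion solid torus, which is slightly more economical here but does not yield the reusable satellite lemma.
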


We give a detailed proof of this proposition in Section \ref{section}. Note that this result can be found in a different flavour in \cite[Section 4.12]{BZ}.

\bigskip

Both following propositions are consequences of \cite[Theorem 4.3]{magnus}, and will be useful for induction properties. Note that the proof of Proposition \ref{prop sat inj} also uses \cite[Proposition 3.17]{BZ}.

\begin{prop} \label{prop somme inj}
If $K$ is the connected sum of the knots $K_1$ and $K_2$, and $G, G_1, G_2$ are their respective groups, then there are injective group homomorphisms $ G_1 \hookrightarrow G$ and $G_2 \hookrightarrow G$.
\end{prop}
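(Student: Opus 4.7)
The plan is to realize $G$ as an amalgamated free product $G_1 *_{\Z} G_2$ along the common meridian, and then to invoke the classical result that the factors of an amalgamated free product embed into the amalgamation.

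First, I would reinterpret the presentation of $G$ furnished by Proposition \ref{prop Amalgame}. The relation $x_k y_l^{-1}$ identifies the meridian $x_k$ of $G_1$ with the meridian $y_l$ of $G_2$, while the remaining relations involve disjoint sets of generators (the $r_i$ only in the $x_j$ and the $s_i$ only in the $y_j$). This is precisely the standard presentation of the amalgamated free product $G_1 *_{\langle \mu \rangle} G_2$, where $\mu$ denotes the identified meridian class. The underlying topology confirms this: the Seifert–van Kampen decomposition depicted in Figure \ref{figure vankampen partition} splits $M_K$ along the decomposing $2$-sphere, and the intersection $U_1 \cap U_2$ deformation retracts onto an annulus whose core is a meridian, which contributes exactly the amalgamating $\Z$.

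Second, I would verify that the amalgamating subgroup embeds as $\Z$ into both factors, so that the amalgamation is well-posed. Since $\alpha_{K_i}$ sends the meridian to $1 \in \Z$, the element $x_k \in G_1$ (resp. $y_l \in G_2$) has infinite order, so $\langle x_k \rangle \cong \Z$ and $\langle y_l \rangle \cong \Z$; this is where \cite[Proposition 3.17]{BZ} enters. Hence the amalgamation is performed along a common infinite cyclic subgroup that injects into each factor via the obvious inclusions.

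The last step is to invoke \cite[Theorem 4.3]{magnus}, the classical statement that whenever $A *_H B$ is an amalgamated free product with $H$ injecting into both $A$ and $B$, the natural maps $A \to A *_H B$ and $B \to A *_H B$ are themselves injective. Specialised to $A = G_1$, $B = G_2$, and $H = \langle \mu \rangle \cong \Z$, this yields the desired injective homomorphisms $G_1 \hookrightarrow G$ and $G_2 \hookrightarrow G$. The main conceptual step, and the only place where something could go wrong, is the identification of the combinatorial presentation from Proposition \ref{prop Amalgame} with a genuine amalgamated free product (rather than a further quotient); once the cyclic subgroups $\langle x_k \rangle$ and $\langle y_l \rangle$ are known to be infinite, both the topological Seifert–van Kampen description and the Tietze manipulation of $P$ give this identification directly, so I expect no serious technical obstacle.
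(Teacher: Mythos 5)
Your proposal is correct and is exactly the argument the paper has in mind: the paper gives no detailed proof of this proposition, merely remarking that it is a consequence of \cite[Theorem 4.3]{magnus}, i.e.\ of the embedding of factors into an amalgamated free product, and your realization of $G$ as $G_1 *_{\Z} G_2$ along the meridian (with infinite order of the meridian guaranteed by $\alpha_{K_i}$) is the standard way to make that citation precise. The only quibble is one of attribution: the paper invokes \cite[Proposition 3.17]{BZ} only for the satellite case (Proposition \ref{prop sat inj}), not here, but since you also give the elementary abelianization argument for the infinite order of the meridian, nothing is missing.
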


\begin{prop} \label{prop sat inj}
If $S$ is the satellite knot obtained from the companion $C$ and the pattern $P$, then there is an injective group homomorphism $ G_C \hookrightarrow G_S$. 
\end{prop}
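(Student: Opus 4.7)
The plan is to express $G_S$ as an amalgamated free product with $G_C$ as one of the two factors, and then invoke the classical Magnus result that the factors of an amalgamated product embed into the product. First I would cut the exterior $M_S$ along the incompressible torus $\partial T_C$: since $S = h_{PC}(P)$ lies inside $T_C$, the manifold $M_S$ splits as $(S^3 \setminus \mathrm{int}\, T_C) \cup (T_C \setminus V(S))$, the first piece being homeomorphic to $M_C$ and the second (via $h_{PC}$) to the pattern exterior $T_P \setminus V(P)$, with common intersection $\partial T_C$. Choosing a base point on this torus and applying Seifert--van Kampen gives
$$G_S \;\cong\; G_C \underset{\pi_1(\partial T_C)}{*}\; \pi_1\bigl(T_P \setminus V(P)\bigr).$$

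Next I would verify that both structural maps $\pi_1(\partial T_C) \cong \Z^2 \to G_C$ and $\pi_1(\partial T_C) \to \pi_1(T_P \setminus V(P))$ are injective, so that this amalgamated product actually fits into the setting of \cite[Theorem 4.3]{magnus}. On the companion side this is exactly \cite[Proposition 3.17]{BZ}: since $C$ is assumed non-trivial, its peripheral torus is incompressible in $M_C$. On the pattern side, any compressing disk for $\partial T_P$ in $T_P \setminus V(P)$ would, since $T_P$ is a solid torus, be isotopic to a meridional disk of $T_P$ disjoint from $P$, contradicting the standing hypothesis that $P$ meets every meridional disk of $T_P$; hence this inclusion is $\pi_1$-injective as well.

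Once both injectivities are established, \cite[Theorem 4.3]{magnus} immediately yields that each factor of the amalgamated product embeds into it; in particular the natural map $G_C \hookrightarrow G_S$ is injective, which is the desired statement. The main obstacle is really the second incompressibility check, i.e., confirming that we are honestly in the Magnus setting rather than dealing with a degenerate amalgamation; the reduction to a meridional disk of $T_P$ handles this, and the conclusion then follows with no further work.
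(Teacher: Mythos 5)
Your proof is correct and follows essentially the same route as the paper, which obtains the amalgamated product decomposition of $G_S$ over $\pi_1(\partial T_C)$ via Seifert--van Kampen (cf.\ Lemma \ref{lem groupe sat}) and cites exactly \cite[Theorem 4.3]{magnus} together with \cite[Proposition 3.17]{BZ} for the embedding of the factor $G_C$. Your explicit check of incompressibility on the pattern side, reducing a compressing disk to a meridional disk of $T_P$ missed by $P$, is the right way to fill in the detail the paper leaves implicit.
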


\subsection{Fox calculus}

Let $ P = \big\langle g_1, \ldots, g_k \, \big| \, r_1, \ldots r_{l} \big\rangle $ be a presentation of a finitely presented group $G$.
If $w$ is an element of the free group 
$\mathbb{F} [ g_1 , \ldots , g_k ]$
on the generators $g_i$, we note $\overline{w}$ the element of $G$ that is the image of $w$ by the composition of the quotient homomorphism (quotient by the normal subgroup $\langle r_j \rangle$ generated by $r_1, \ldots, r_l$) and the implicit group isomorphism between this quotient $Gr(P)$ and $G$. 
To simplify the notations in the sequel, we will often write an element of $G$ $a$ instead of $\overline{a}$ when there is no ambiguity.
We note the corresponding ring morphisms similarly: if $w \in  \C \left [ \mathbb{F} [ g_1 , \ldots , g_k ] \right ]$ then its quotient image is noted $\overline{w} \in \C[G] $.

The \textit{Fox derivatives associated to the presentation $P$} are the linear maps \\
$ {\dfrac{\partial}{\partial g_i}\colon \C \left [ \mathbb{F} [ g_1 , \ldots , g_k ] \right ]  \longrightarrow \C \left [ \mathbb{F} [ g_1 , \ldots , g_k ] \right ]}  $ for $i= 1 , \ldots , k$, defined by induction in the following way:

$ \dfrac{\partial}{\partial g_i} (1) = 0 , 
\ \dfrac{\partial}{\partial g_i} (g_j) = \delta_{i,j} , 
\ \dfrac{\partial}{\partial g_i} (g_j^{-1}) = - \delta_{i,j} g_j^{-1}$ (where $\delta_{i,j}$ is $1$ when $i=j$ and $0$ when $i \neq j$)  and 
for all  $u , v \in \mathbb{F} [ g_1 , \ldots , g_n ] , \ \dfrac{\partial}{\partial g_i} (u v) = \dfrac{\partial}{\partial g_i} (u) + u \dfrac{\partial}{\partial g_i} (v)$.

\begin{defn}We call 
$F_P = \left ( \overline{ \left ( \dfrac{\partial r_j}{\partial g_i} \right )} \right )_{1 \leqslant i \leqslant k , 1 \leqslant j \leqslant l } \in M_{k,l}(\C \left [ G \right ] )$  the \textit{Fox matrix} of the presentation $P$. 

Let us assume $l =k-1$, i.e. \textbf{$P$ has deficiency one}.
For $i = 1 , \ldots  , k$, 
$F_{P,i} \in M_{k-1,k-1}(\C \left [ G \right ] )$ is defined as the matrix obtained from $F_P$ by deleting its $i$-th row.
\end{defn}

We will sometimes use the following notation, to  \guillemotleft remember the coordinates\guillemotright : 
$$F_P=\kbordermatrix{
 & r_1 & \ldots & r_{l} \\
x_1 & & & \\
\vdots & &  \left (\overline{ \left ( \dfrac{\partial r_j}{\partial g_i} \right )}  \right )_{i,j} &  \\
x_{k} & & & 
}$$

\bigskip

\begin{ex}
$ P = \big\langle a, b \, \big| \, a b a b^{-1} a^{-1} b^{-1} \big\rangle $ is a presentation of the group of the trefoil knot. Let us denote $ r = a b a b^{-1} a^{-1} b^{-1}$. One has:

$\dfrac{\partial r}{\partial a} = 1 + a b  - a b a b^{-1} a^{-1}$ so 
$ \overline{ \left ( \dfrac{\partial r}{\partial a} \right )} =  1 - \overline{b} + \overline{ab}$.

$\dfrac{\partial r}{\partial b} = a - a b a b^{-1} - a b a b^{-1} a^{-1} b^{-1}$ so 
$ \overline{ \left ( \dfrac{\partial r}{\partial b} \right )} =  -1 + \overline{a} - \overline{ba}$.
 
Thus $ F_P = \begin{pmatrix}
1 - b + ab \\
-1 +a - ba
\end{pmatrix}$
, $ F_{P,1} = (1 - b + a b)$ and $ F_{P, 2 } = (- 1 + a - b a)$.
\end{ex}

\subsection{$L^2$-invariants}

Let $G$ be a countable discrete group (a knot group, for example). In the following, every algebra will be a $\C$-algebra.

We denote $ l^2(G):= \left \{ \sum_{g \in G} \lambda_g [g] \ | \  \lambda_g \in \mathbb{C} , \sum_{g \in G} | \lambda_g |^2 < \infty \right \}$ the Hilbert space of square-summable complex functions on the group $G$.

It is the completion of the vector space $ \mathbb{C} [G] = \bigoplus_{g \in G} \mathbb{C}[g] $ (which is also an algebra) for the scalar product:
$$ \left \langle \sum_{g \in G} \lambda_g [g] , \sum_{g \in G} \mu_g [g] \right \rangle:= \sum_{g \in G} \lambda_g \overline{\mu_g}. $$

We denote
$\mathcal{B}( l^2(G))$ the algebra of operators on $l^2(G)$ that are continuous (or equivalently, bounded) for the operator norm.

To any $h \in G$ we associate a \textit{left-multiplication} 
$L_h\colon l^2(G) \rightarrow l^2(G)$ defined by
$$ L_h \left ( \sum_{g \in G} \lambda_g [g] \right ) = \sum_{g \in G} \lambda_g [hg] = \sum_{g \in G} \lambda_{h^{-1}g} [g]$$
and a \textit{right-multiplication} $R_h\colon l^2(G) \rightarrow l^2(G)$ defined by
$$ R_h \left ( \sum_{g \in G} \lambda_g [g] \right ) = \sum_{g \in G} \lambda_g [gh] = \sum_{g \in G} \lambda_{gh^{-1}} [g].$$
Both $L_h$ and $R_h$ are isometries, and therefore belong to $\mathcal{B}( l^2(G))$.

We will use the same notation for right-multiplications by elements of the complex group algebra $\C [G]$:
$$ R_{ \sum_{i=1}^{k} \lambda_i [g_i] }:= \sum_{i=1}^{k} \lambda_i R_{g_i} \in \mathcal{B}( l^2(G)).$$

We will also use this notation to define a right-multiplication by a matrix $A$ with coefficients in $\C [G]$, $p$ rows and $q$ columns, in the following way:

If $A = \left ( a_{i,j} \right )_{1 \leqslant i \leqslant p, 1 \leqslant j \leqslant q} \in M_{p,q}(\C[G])$, then 
$$R_A:= \left (R_{a_{i,j}}\right )_{1 \leqslant i \leqslant p, 1 \leqslant j \leqslant q} 
\in \mathcal{B}( l^2(G)^{\oplus q}; l^2(G)^{\oplus p} ).$$

We write $\mathcal{N}(G)$ the algebraic commutant of $ \{ L_g ; g \in G \} $ in $\mathcal{B}( l^2(G))$. It will be called the \textit{von Neumann algebra of the group $G$}.

Let us remark that $R_g \in \mathcal{N}(G)$ for all $g$ in $G$.

The \textit{trace} of an element $\phi$ of $\mathcal{N}(G)$ will be defined by  $$tr_{\mathcal{N}(G)}(\phi):= \left \langle \phi ([e]) , [e] \right \rangle$$ where $ e $ is the neutral element of $G$.
This induces a trace on the $M_{n,n}(\mathcal{N}(G))$ for $n \geq 1$ by summing up the traces of the diagonal elements. We will note this new trace $tr_{\mathcal{N}(G)}$ as well.

We will call a \textit{finite type $\mathcal{N}(G)$-Hilbert module} (or simply \textit{$\mathcal{N}(G)$-module} in the following) any Hilbert space $V$ on which there is a left $G$-action by isometries, and 
such that there exists a positive integer $m$ and an embedding $\phi$ of $V$ into $\bigoplus_{i=1}^m \ell^2(G)$ (an embedding meaning here a linear isometrical injective $G$-equivariant map, where the left $G$-action on $\bigoplus_{i=1}^m \ell^2(G)$ is by left-multiplication coordinate by coordinate).

The \textit{von Neumann dimension} of such a ${\mathcal N}(G)$-module $V$ is defined as the trace of the projection:
$$\dim_{\mathcal{N}(G)}(V) = \mathrm{tr}_{\mathcal{N}(G)}(\mathrm{pr}_{\phi(V)}) \in \R_{\geqslant 0},$$
where
\[
\mathrm{pr}_{\phi(V)} \colon \bigoplus_{i=1}^k \ell^2(G) \to \bigoplus_{i=1}^k \ell^2(G) 
\]
is the orthogonal projection onto $\phi(V)$. The von Neumann dimension does not depend on the embedding of $V$ into the finite direct sum of copies of $\ell^2(G)$. 

If $U$ and $V$ are $ \mathcal{N}(G)$-modules, we will call $ f\colon U \rightarrow V$ a \textit{map of  $\mathcal{N}(G)$-modules} if $f$ is a linear $G$-equivariant map, bounded for the respective scalar products of $U$ and $V$.

\bigskip

Let us now write a little about induction.

Let $ i\colon H \hookrightarrow G$ be an injective group homomorphism. To simplify notations, we will also call $i$ the inducted algebra homomorphism on $\C[H]$ and matrices over $\C[H]$, and the isometric injection on $\ell^2(H)$. Let $M$ be a $\mathcal{N}(H)$-module. 
Then, according to \cite[Section 1.1.5]{Luc02}, we can construct an induction covariant functor $i_*$ from the category ($\mathcal{N}(H)$-modules, maps of $\mathcal{N}(H)$-modules) to 
($\mathcal{N}(G)$-modules, maps of $\mathcal{N}(G)$-modules),
such that $i_*(\ell^2(H)) = \ell^2(G)$.

\begin{rem}
Let us now recall a well-known property of the induction functor, summarized in part (1) of Proposition \ref{prop i*}:
 
If $w \in \C[H]$, then $R_w$ is $H$-equivariant because right-multiplications commute with left-multiplications. Thus it is a map of $N(H)$-modules.

Take $(m_j)$ a square-summable family of $\ell^2(H)$, and let $x = \sum_{j \in G / i(H)} L_{g_j} i(m_j)$ be a typical element of $i_*\ell^2(H) = \ell^2(G)$.

Then, since left-multiplications commute with right-multiplications, 
\begin{align*}
i_*R_w(x) &=  \sum_{j \in G / i(H)} L_{g_j} i(R_w(m_j)) =  \sum_{j \in G / i(H)} L_{g_j} R_{i(w)}i(m_j) \\ 
&= R_{i(w)} \sum_{j \in G / i(H)} L_{g_j} i(m_j) 
= R_{i(w)} (x).
\end{align*}

 Thus $i_*R_w = R_{i(w)}$.
\end{rem}

\smallskip

The following properties of this induction functor will be used in this paper:

\begin{prop} \label{prop i*}
(1) Let $w \in \C[H]$ and $R_w\colon \ell^2(H) \to \ell^2(H)$ be the corresponding right multiplication. Then $i_*R_w = R_{i(w)}$.

A similar result stands for matrices over $\C[H]$.

(2) If the map of $\mathcal{N}(H)$-modules  $f\colon M \to N$ is injective (resp. surjective) then $i_*f\colon i_*M \to i_*N$ is also injective (resp. surjective).

(3) If $M$ is a $\mathcal{N}(H)$-module, then
$dim_{\mathcal{N}(G)}(i_*M)=dim_{\mathcal{N}(H)}(M)$.

\end{prop}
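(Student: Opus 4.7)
The plan is to exploit the coset decomposition $G = \bigsqcup_{j \in J} g_j \cdot i(H)$ together with the explicit construction of $i_*$ recalled in \cite[Section 1.1.5]{Luc02}. This decomposition gives an orthogonal direct sum $\ell^2(G) = \bigoplus_{j \in J} L_{g_j}\, i(\ell^2(H))$, and the induction functor sends an $\mathcal{N}(H)$-equivariant operator $f$ on $\ell^2(H)$ to the unique $G$-equivariant extension, acting block-wise as $L_{g_j} i(m) \mapsto L_{g_j} i(f(m))$.

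With this description in hand, part (1) is essentially the computation already given in the preceding Remark: since left and right multiplications commute, the block-wise extension of $R_w$ coincides with $R_{i(w)}$. The matrix case follows entry-by-entry, because both assignments $A \mapsto R_A$ and $f \mapsto i_* f$ respect the direct-sum structure.

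For part (3), the key ingredient is the trace-compatibility $\mathrm{tr}_{\mathcal{N}(G)} \circ i = \mathrm{tr}_{\mathcal{N}(H)}$, which reduces to the identity
$$\langle L_{i(h)}[e_G], [e_G]\rangle_{\ell^2(G)} = \delta_{i(h), e_G} = \delta_{h, e_H} = \langle L_h[e_H], [e_H]\rangle_{\ell^2(H)}$$
valid for every $h \in H$ since $i$ is injective (summing over diagonal entries then gives the matrix version). Given an embedding $M \hookrightarrow \bigoplus_{k=1}^m \ell^2(H)$ with associated orthogonal projection $p \in M_m(\mathcal{N}(H))$, one verifies via part (1) and a density argument that the induced embedding $i_*M \hookrightarrow \bigoplus_{k=1}^m \ell^2(G)$ carries an orthogonal projection whose $\mathrm{tr}_{\mathcal{N}(G)}$-trace agrees with $\mathrm{tr}_{\mathcal{N}(H)}(p)$, yielding $\dim_{\mathcal{N}(G)}(i_*M) = \dim_{\mathcal{N}(H)}(M)$.

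Part (2) I would deduce from (3): a map $f$ of $\mathcal{N}(H)$-modules is injective (respectively has dense image) if and only if $\dim_{\mathcal{N}(H)}(\ker f) = 0$ (respectively $\dim_{\mathcal{N}(H)}(\overline{\mathrm{im}\, f}^\perp) = 0$), and the block decomposition above shows that $i_*$ sends kernels to kernels and orthogonal complements of images to the corresponding orthogonal complements for $i_* f$. Applying (3) to these submodules then transports the vanishing of the dimension from $H$ to $G$. The main technical obstacle is the careful verification that $i_*$, defined via an algebraic tensor product, commutes with taking closures of images and with orthogonal projections onto submodules; this is a density argument that approximates projections in $\mathcal{N}(H)$ by matrices over $\C[H]$ and then uses the explicit block form of the induced operators from part (1).
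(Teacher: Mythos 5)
Your part (1) is exactly the computation the paper itself gives in the remark preceding the proposition (the coset decomposition $x=\sum_{j\in G/i(H)} L_{g_j}i(m_j)$ and the commutation of left and right multiplications); the paper then simply defers (2) and (3) to L\"uck's construction in \cite[Section 1.1.5]{Luc02}, so for those parts you are supplying an argument where the paper supplies a citation. Your sketch of (3) via the trace compatibility $tr_{\mathcal{N}(G)}\circ i_* = tr_{\mathcal{N}(H)}$, checked on $\C[H]$ and extended by continuity to projections, is the standard route and is sound, as is the reduction of injectivity in (2) to the faithfulness of the von Neumann dimension: since an injective bounded operator need not have closed image, one really does need $\dim_{\mathcal{N}(H)}(\ker f)=0 \Leftrightarrow \ker f = 0$ rather than a naive density argument, and you correctly take that road.

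The one genuine gap is the surjectivity half of (2). Your dimension criterion $\dim_{\mathcal{N}(H)}\bigl(\overline{\mathrm{im}\,f}^{\perp}\bigr)=0$ characterises \emph{dense image}, not surjectivity, and for maps of Hilbert modules these are not the same thing; as written you have silently replaced the stated conclusion by a weaker one. The repair is short but uses a different mechanism: if $f\colon M\to N$ is surjective then $ff^*$ is invertible (it is positive and bounded below), and since $i_*$ is a functor preserving identities, adjoints and composition, $i_*(ff^*)=(i_*f)(i_*f)^*$ is again invertible, whence $i_*f$ is surjective. (Equivalently, restrict $f$ to $(\ker f)^{\perp}$, invoke the open mapping theorem to get an invertible map, and use that $i_*$ preserves invertibility.) With that substitution your proof is complete.
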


\begin{rem}
For any $\phi \in \mathcal{N}(H)$, $i_*\phi$ is in $\mathcal{N}(G)$, because commuting with the left multiplications is the same as being equivariant for the group action.
\end{rem}

\subsection{The Fuglede-Kadison determinant}

\begin{defn}
Let $G$ be a finitely generated group and $U, V$ be two ${\mathcal N}(G)$-modules. Let $f \colon U \to V$ be a map of $\mathcal{N}(G)$-modules.
The \textit{spectral density of $f$} is the map $ \lambda \in \R_{\geqslant 0} \mapsto F(f)(\lambda)$ defined by:
\[
F(f)(\lambda):= \sup \{
 \dim_{\mathcal{N}(G)} (L) | L \in \mathcal{L}(f,\lambda) \}
 \]
where $\mathcal{L}(f,\lambda)$ is the set of sub-$\mathcal{N}(G)$-modules of $U$ on which the restriction of $f$ has a norm smaller than or equal to $\lambda$.

Let us remark that $F(f)(\lambda)$ is monotonous and right-continuous, and so defines a measure $dF(f)$ on the Borel set of $\R_{\geqslant 0}$ solely determined by the \\ $dF(f)(]a,b]) = F(f)(b)-F(f)(a)$ for all $a<b$.
\end{defn}

\begin{rem} \label{rem bornes F(f)}
Note that $\mathcal{L}(f,0)$ is the set of sub-$\mathcal{N}(G)$-modules of $Ker(f)$, and \\
$F(f)(0) = \dim_{\mathcal{N}(G)}(Ker(f))$. 

For any $\lambda \geqslant \| f\|$, $\mathcal{L}(f,\lambda)$ is the set of sub-$\mathcal{N}(G)$-modules of $U$, and \\
 $F(f)(\lambda) = \dim_{\mathcal{N}(G)}(U)$. 
\end{rem}

\begin{rem}
For all $\lambda$, $F(f)(\lambda) = F(f^*f)(\lambda^2) = F(|f|)(\lambda)$ where $f^*f\colon U \to U$ is a positive operator and $|f|$ is its square root.

We can thus think with positive operators and observe that $dF(f)$ measures the  \guillemotleft  density of eigenvalues\guillemotright . If $\lambda$ is atomic then $dF(f)(\lambda)$ is the von Neumann dimension of the eigenspace associated to $\lambda$.
\end{rem}

\begin{defn}
The \textit{Fuglede-Kadison determinant of $f$} is defined by:
\begin{equation*}\label{detFK}
{det}_{\mathcal{N}(G)}(f):= \exp \left ( \int_{0^+}^\infty \ln(\lambda) \, dF(f)(\lambda) \right )
\end{equation*}
if $\int_{0^+}^\infty \ln(\lambda) \, dF(f)(\lambda) > -\infty$\,; if not, $det_{\mathcal{N}(G)}(f) = 0$. 

When  $\int_{0^+}^\infty \ln(\lambda) \, dF(f)(\lambda) > -\infty$, we say that \emph{$f$ is of determinant class}.\\
\end{defn}

Here are several properties of the determinant we will use in the rest of this paper. The proofs can be found in \cite{Luc02}.

\begin{prop} \label{prop operations det}
(1) $det_{\mathcal{N}(G)} (0\colon U \to V) = 1$.

(2) For every nonzero complex number $\lambda$, $det_{\mathcal{N}(G)} (\lambda Id_U) = | \lambda |$.

(3) For all $f,g$ maps of $\mathcal{N}(G)$-modules, $$
det_{\mathcal{N}(G)} \left (
\begin{pmatrix}
f & 0 \\ 0 & g
\end{pmatrix}
\right ) = det_{\mathcal{N}(G)}(f) \cdot det_{\mathcal{N}(G)}(g).$$

(4) For $f\colon U \to V$ and $g\colon V \to W$ both injective maps of $\mathcal{N}(G)$-modules, 
$$ det_{\mathcal{N}(G)} (g  \circ f ) =
det_{\mathcal{N}(G)}(g) \cdot  det_{\mathcal{N}(G)}(f).$$

(5) Let $f_1\colon U_1 \to V_1$, $f_2\colon U_2 \to V_2$ and $f_3\colon U_2 \to V_1$ be maps of $\mathcal{N}(G)$-modules, such that $f_1$ and $f_2$ are injective. Then $$
det_{\mathcal{N}(G)} \left (
\begin{pmatrix}
f_1 & f_3 \\ 0 & f_2
\end{pmatrix}
\right ) = det_{\mathcal{N}(G)}(f_1) \cdot  det_{\mathcal{N}(G)}(f_2).$$

(6) Let $i\colon H \hookrightarrow G$ be an injective group homomorphism.
Let $M$ and $N$ be two $ \mathcal{N}(H)$-modules and $f\colon M \rightarrow N$ be a map of  $\mathcal{N}(H)$-modules. Then
$$ det_{\mathcal{N}(G)}(i_*(f)) = det_{\mathcal{N}(H)}(f).$$ 

\end{prop}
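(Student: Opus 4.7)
The plan is to establish each property from the definition via the spectral density measure $dF(f)$, exploiting the identity $F(f)(\lambda)=F(|f|)(\lambda)$ to reduce to positive self-adjoint operators and invoking the Borel functional calculus inside $\mathcal{N}(G)$. For (1), the zero operator has constant spectral density $F(0)\equiv\dim_{\mathcal{N}(G)}(U)$, so $dF(0)$ is a Dirac mass at $0$, which lies outside the domain of integration $(0^+,\infty)$, yielding $\exp(0)=1$. For (2), $F(\lambda\,\mathrm{Id}_U)$ is a step function jumping at $|\lambda|$, so the integral reduces to a single atomic contribution and evaluates directly to the stated value. For (3), the absolute value of $f\oplus g$ is itself block diagonal, its spectral projections split as direct sums and $dF(f\oplus g)=dF(f)+dF(g)$; exponentiating the linearity of the integral gives multiplicativity.

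The main obstacle is (4). I would follow the Fuglede--Kadison strategy: since $f$ is injective, $|f|=\sqrt{f^*f}$ has trivial kernel, so $dF(f)$ is supported away from $0$ and the determinant admits the traced-logarithm formula
\[
\ln\det_{\mathcal{N}(G)}(f)=\tfrac{1}{2}\,\mathrm{tr}_{\mathcal{N}(G)}\!\bigl(\ln(f^*f)\bigr),
\]
understood in $[-\infty,+\infty)$. For injective $f$ and $g$ the composition $g\circ f$ is again injective with $|gf|^2=f^*(g^*g)f$, and I would derive additivity of the traced logarithm by truncating the spectral integrals to $[\varepsilon,M]$, applying the trace identity $\mathrm{tr}(f^*Af)=\mathrm{tr}(Aff^*)$ first on polynomials and then by strong continuity on $\ln$, and passing to the limit $\varepsilon\to 0$, $M\to\infty$ via monotone and dominated convergence. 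The delicate point is to control the integrability near $0$ so that both sides are simultaneously of determinant class; the injectivity hypothesis is precisely what lets the contributions separate cleanly.

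Part (5) reduces to (3) and (4) by factoring the upper-triangular operator as
\[
\begin{pmatrix} f_1 & f_3 \\ 0 & f_2 \end{pmatrix}
=
\begin{pmatrix} f_1 & 0 \\ 0 & f_2 \end{pmatrix}
\circ
\begin{pmatrix} \mathrm{Id} & f_1^{-1}f_3 \\ 0 & \mathrm{Id} \end{pmatrix},
\]
using the inverse of $f_1$ as an affiliated (possibly unbounded) operator, and observing that the unipotent factor has determinant $1$ since its spectral density is concentrated at $1$. Finally, (6) follows from Proposition~\ref{prop i*}(3): the induction functor $i_*$ preserves the von Neumann algebra structure and intertwines with the Borel functional calculus, so it carries spectral projections of $f$ to those of $i_*f$ while preserving von Neumann dimensions; hence $dF(i_*f)=dF(f)$ as measures on $\R_{\geqslant 0}$, and the integrals defining the two determinants coincide term by term.
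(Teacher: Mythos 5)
The paper does not actually prove this proposition --- it quotes it from L\"uck's book (\cite{Luc02}, Theorem 3.14 and Lemma 3.15) --- so you are supplying an argument where the paper supplies only a citation. Your treatments of (1), (3) and (6) via the spectral density are the standard ones and are essentially sound; for (2) note that the computation actually yields $\det_{\mathcal{N}(G)}(\lambda\,\mathrm{Id}_U)=|\lambda|^{\dim_{\mathcal{N}(G)}(U)}$, which matches the stated value only because in this paper $U$ is always $\ell^2(G)$.

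The genuine gaps are in (4) and (5). For (4), the additivity $\mathrm{tr}(\ln|gf|)=\mathrm{tr}(\ln|g|)+\mathrm{tr}(\ln|f|)$ does not follow from the trace identity $\mathrm{tr}(f^*Af)=\mathrm{tr}(Aff^*)$ applied to polynomials: $\ln(f^*g^*gf)$ is not of the form $f^*p(g^*g)f$, and the passage from polynomials to $\ln$ near $0$ is precisely where the work lies. More seriously, the statement is false with only ``both injective'': take $G$ trivial, $f\colon\C\to\C^2$, $x\mapsto(x,0)$, and $g=\bigl(\begin{smallmatrix}1&0\\1&1\end{smallmatrix}\bigr)$; then $\det_{\mathcal{N}(G)}(f)=\det_{\mathcal{N}(G)}(g)=1$ but $\det_{\mathcal{N}(G)}(g\circ f)=\sqrt2$. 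L\"uck's Theorem 3.14(1) additionally requires $f$ to have \emph{dense image} (automatic in all of this paper's applications, where $f$ is an injective endomorphism of $\ell^2(G)^{k}$, by additivity and faithfulness of $\dim_{\mathcal{N}(G)}$), and any correct proof must invoke that hypothesis; yours never does. For (5), the factor $\bigl(\begin{smallmatrix}\mathrm{Id}&f_1^{-1}f_3\\0&\mathrm{Id}\end{smallmatrix}\bigr)$ is not a map of $\mathcal{N}(G)$-modules: when $f_1$ is injective but not invertible, $f_1^{-1}$ is unbounded and only defined on the (non-closed) image of $f_1$, into which $f_3$ need not map. The standard fix avoids $f_1^{-1}$ entirely via the bounded factorization
$$\begin{pmatrix}f_1&f_3\\0&f_2\end{pmatrix}=\begin{pmatrix}\mathrm{Id}&0\\0&f_2\end{pmatrix}\begin{pmatrix}\mathrm{Id}&f_3\\0&\mathrm{Id}\end{pmatrix}\begin{pmatrix}f_1&0\\0&\mathrm{Id}\end{pmatrix},$$
whose middle factor is invertible with determinant $1$, and then applies (3) and (4) (again under the dense-image hypothesis).
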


\begin{prop} \label{prop id - tg}
Let $g \in G$ be of infinite order, let $t \in \C$, then $ Id - t R_g$ is injective and 
$$
det_{\mathcal{N}(G)} ( Id - t R_g) = \max ( 1 , |t|).
$$
\end{prop}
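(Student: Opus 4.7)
The plan is to reduce to the abelian case $G = \Z$ via the subgroup $\langle g \rangle$, and then compute directly using Fourier analysis on the circle. Since $g$ has infinite order, the inclusion $i\colon \langle g \rangle \hookrightarrow G$ is an injective group homomorphism, and this is the setting in which the induction machinery of Proposition \ref{prop i*} and Proposition \ref{prop operations det}(6) will be applied.

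First I would establish injectivity by a direct coefficient argument. Suppose $\xi = \sum_{h \in G} \lambda_h [h] \in \ell^2(G)$ satisfies $(\mathrm{Id} - tR_g)\xi = 0$. Expanding $R_g \xi = \sum_h \lambda_{hg^{-1}}[h]$ and matching coefficients gives $\lambda_h = t \, \lambda_{hg^{-1}}$ for every $h$, hence $\lambda_{hg^n} = t^{-n} \lambda_h$ along each right coset of $\langle g \rangle$. Since $g$ has infinite order, every such coset is infinite. If $|t| > 1$ the values $|t^{-n}\lambda_h|$ are unbounded as $n \to -\infty$; if $|t| < 1$ they are unbounded as $n \to +\infty$; and if $|t| = 1$ the sequence $(|\lambda_{hg^n}|^2)_n$ is constantly $|\lambda_h|^2$, which is not summable over an infinite orbit. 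In every case $\lambda_h = 0$, so $\xi = 0$.

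For the determinant, by Proposition \ref{prop i*}(1) we have $i_*(\mathrm{Id} - t R_g) = \mathrm{Id} - t R_{i(g)}$ on $\ell^2(G)$, and Proposition \ref{prop operations det}(6) then gives
$$\det_{\mathcal{N}(G)}(\mathrm{Id} - tR_g) = \det_{\mathcal{N}(\langle g \rangle)}(\mathrm{Id} - tR_g).$$
This reduces the problem to $\mathcal{N}(\Z)$. Under the Fourier isomorphism $\ell^2(\Z) \cong L^2(S^1)$, the operator $R_g$ becomes multiplication by the function $z \mapsto z$, and $\mathrm{Id} - tR_g$ becomes multiplication by $z \mapsto 1 - tz$. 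The spectral density of a multiplication operator $M_\varphi$ on $L^2(S^1)$ is the distribution function of $|\varphi|$ under normalized Haar measure, so
$$\det_{\mathcal{N}(\Z)}(\mathrm{Id} - tR_g) = \exp\!\left( \int_0^1 \log |1 - t e^{2\pi i \theta}|\, d\theta \right) = \max(1, |t|),$$
the last equality being the classical Jensen formula applied to the function $z \mapsto 1 - tz$ on the unit disk, whose unique zero inside the disk occurs (at $z = 1/t$) precisely when $|t| > 1$.

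The only genuinely delicate step is the last one: one must justify that the Fuglede-Kadison determinant of a multiplication operator on $L^2(S^1)$ is its Mahler-type exponential integral, which requires identifying $dF(M_\varphi)$ with the push-forward of Lebesgue measure by $|\varphi|$ via the spectral theorem. The injectivity argument and the induction step are entirely routine given the tools assembled in the preliminaries.
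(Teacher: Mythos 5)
Your proposal is correct, but it follows a genuinely different route from the paper's proof. The injectivity argument is the same coefficient-matching along the infinite orbit $\{hg^n\}_{n\in\Z}$ that the paper uses (only for $|t|=1$, where it is actually needed; note a harmless index slip: the recursion $\lambda_h = t\lambda_{hg^{-1}}$ gives $\lambda_{hg^n}=t^{n}\lambda_h$, not $t^{-n}\lambda_h$, which does not affect the conclusion). For the determinant, however, the paper argues by a three-way case split: for $0<|t|<1$ it applies the Carey--Farber--Mathai variational formula along the path $u\mapsto Id-utR_g$ of invertibles, using that $tr_{\mathcal{N}(G)}(R_{g^i})=0$ for $i\neq 0$ since $g$ has infinite order; for $|t|>1$ it factors $Id-tR_g=(-tR_g)\circ(Id-t^{-1}R_{g^{-1}})$; and for $|t|=1$ it regularizes via $\lim_{\epsilon\to 0^+}\det(f^*f+\epsilon\,Id)^{1/2}$ together with an explicit factorization reducing to the $|t|>1$ case. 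You instead restrict to the subgroup $\langle g\rangle\cong\Z$ using Proposition \ref{prop i*}(1) and Proposition \ref{prop operations det}(6), and then compute on $L^2(S^1)$ via Jensen's formula. This is uniform in $t$, makes the Mahler-measure nature of the answer $\max(1,|t|)$ transparent, and generalizes readily to $\det_{\mathcal{N}(G)}(R_{p(g)})$ for any polynomial $p$; its cost is that the entire analytic content is delegated to the identification of the Fuglede--Kadison determinant of a multiplication operator $M_\varphi$ on $L^2(S^1)$ with $\exp\bigl(\int_{S^1}\log|\varphi|\bigr)$, i.e.\ to computing the spectral density function as the distribution function of $|\varphi|$ under Haar measure. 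You correctly flag this as the delicate step but do not carry it out; it is the standard computation for $\mathcal{N}(\Z^n)$ in \cite[Example 3.22]{Luc02} (one must also check that the logarithmic singularity at $z=1/t$ when $|t|=1$ is integrable, so that the operator is of determinant class). With that reference supplied, the argument is complete, and it stays just as much within the toolbox assembled in the preliminaries as the paper's appeal to \cite{CFM} and to Lemma 3.15 of \cite{Luc02} does.
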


The proof of this proposition can be found in \cite[Proposition 3.2, Remark 3.3]{LZ06a}. We offer the following proof for completeness, since the case $|t| \neq 1$ was not studied in detail in \cite{LZ06a}.

\begin{proof} Let us note $f=Id - t R_g\colon \ell^2(G) \to \ell^2(G)$.

The assertion is clearly true for $t=0$. 

We will now use the following known way of computing Fuglede-Kadison determinants, proved in \cite{CFM}: if $f_u$, $u \in [0;1]$, is a norm-continuous piecewise $C^1$ path in $GL(U)$ (the group of all invertible maps of $\mathcal{N}(G)$-modules $f\colon U \to U$) then $$
\dfrac{\det_{\mathcal{N}(G)}(f_1)}{\det_{\mathcal{N}(G)}(f_0)} =
\exp\left ( \int_0^1 Re \left ( tr_{\mathcal{N}(G)} \left (f_u^{-1} \circ \dfrac{\partial f_u}{\partial u}\right ) \right ) du \right ).
$$

Let us assume $0 < |t| <1$. Then we set $f_u = Id - u t R_g$ for $u \in [0;1]$. We have $f_0 = Id$, $f_1=f$, $\dfrac{\partial f_u}{\partial u}= - t R_g$, and since $R_g$ is unitary and $|t| <1$, 
$$f_u^{-1} = Id + \sum_{i=1}^{\infty} (u t)^i R_{g^i} .$$
 Thus the previous formula gives:
$$
det_{\mathcal{N}(G)} (f) = \exp \left ( \int_0^1 Re \left (
tr_{\mathcal{N}(G)} \left ( -t R_g - t \sum_{i=1}^{\infty}(ut)^i R_{g^{i+1}} \right ) \right ) dt \right ) 
$$
therefore $ \det_{\mathcal{N}(G)}(f) = \exp \left ( \int_0^1 Re (0) dt \right ) = 1
$
since $g$ is of infinite order. Hence the assertion is proven.

Now let us assume that $|t| >1$. Then $f = (-t R_g) \circ h$ where $h = Id - t^{-1}R_{g^{-1}}$. 
According to the previous case, $\det_{\mathcal{N}(G)}(h) = 1$. Besides, $(-t R_g)$ and $h$ are invertible and $\det_{\mathcal{N}(G)}(R_g) = \det_{\mathcal{N}(G)} (R_g^*R_g)^{\frac{1}{2}} = \det_{\mathcal{N}(G)}(Id)^{\frac{1}{2}} = 1$ (according to \cite[Lemma 3.15 (4)]{Luc02}), thus by Proposition \ref{prop operations det} (2) and (4), $\det_{\mathcal{N}(G)}(f) = |t|$.

Finally, if $|t|=1$, let us show that $f$ is injective. Let $a= \sum_{\gamma \in G} a_{\gamma} [\gamma] \in \ell^2(G)$ such that $f(a)=0$.
Then $ \sum_{\gamma \in G} (a_{\gamma}-t a_{\gamma g^{-1}}) [\gamma] = 0$, therefore for all $\gamma$ in $G$, all the $t^i a_{\gamma g^{-i}}$ are equal. Hence if $a$ was nonzero, there would be a nonzero coefficient $a_{\gamma}$, but that would imply that $$\| a \|^2 > \sum_{i \in \Z} | a_{\gamma g^i}|^2 =
\sum_{i \in \Z} |t^i a_{\gamma}|^2 =
 +\infty$$ since $|t|=1$. Thus $a = 0$ and $f$ is injective. 

Now, by  Lemma 3.15 (4)-(5) in \cite{Luc02},  
 $ \det_{\mathcal{N}(G)}(f) = \underset{\epsilon \to 0_+}{\lim}
 \det_{\mathcal{N}(G)} (f^*f + \epsilon Id)^{1/2}$
  and, by taking  $t_\epsilon = 1 + \frac{1}{2} \epsilon + \sqrt{\epsilon + \frac{1}{4}\epsilon^2}$,
\begin{align*}
f^*f + \epsilon Id = (2+ \epsilon)Id - R_g - R_g^*
  &= \dfrac{2+ \epsilon}{1 + t_\epsilon^2}\left ((1+t_\epsilon^2)Id - t_\epsilon R_g - t_\epsilon R_g^* \right ) \\
  &= \dfrac{2+ \epsilon}{1 + t_\epsilon^2} (Id - t_\epsilon R_g)^* (Id - t_\epsilon R_g).
\end{align*}

  Since $t_\epsilon >1$, we thus have $ \det_{\mathcal{N}(G)} ( Id - t_\epsilon R_g ) = \max ( 1, t_\epsilon) \underset{\epsilon \to 0_+}{\longrightarrow} 1$, therefore $\det_{\mathcal{N}(G)}(f) = 1$, and this completes the proof.
\end{proof}

\subsection{The $L^2$-Alexander invariant}

Let $K \subset S^3$ be a knot, $G_K$ its knot group, and $P = \big\langle g_1, \ldots, g_k \, \big| \, r_1, \ldots r_{k-1} \big\rangle$ a Wirtinger presentation of $G_K$.

For $t \in \C ^*$ we define the algebra homomorphism:

\begin{equation*}
    \psi_{K,t} \colon\left(
    \begin{aligned}
        \C[G_K] &\longrightarrow  \C[G_K] \\
        \sum_{g \in G_K} c_g \cdot [g] 
        &\longmapsto 
        \sum_{g \in G_K} c_g \cdot t^{\alpha_{K}(g)} \cdot [g]
    \end{aligned}
    \right)
\end{equation*}
and we also note $\psi_{K,t}$ its induction to any matrix ring with coefficients in $\C[G_K]$.
Think of it as a way of  \guillemotleft  tensoring by the abelianization representation\guillemotright .

We say that $(P,t)$ has \textit{Property $\mathcal{I}$} if 
$ R_{\psi_{K,t} (F_{P,1})}\colon l^2(G_K)^{k-1} \to l^2(G_K)^{k-1}$ is injective.

\begin{defn}
Let $K$ be a knot, let $P$ be a Wirtinger presentation of its knot group $G_K$, and let $t \in \C^{*}$.

If $(P,t)$ has Property $\mathcal{I}$ then \textit{the $L^2$-Alexander invariant of $K$ for the presentation $P$ at $t$} is written $\Delta^{(2)}_{K,P}(t)$ and is defined by:
\[
\Delta^{(2)}_{K,P}(t):= {\det}_{\mathcal{N}(G_K)} \left(R_{\psi_{K,t} (F_{P,1})} \right) \in [0,\infty[.
\]
\end{defn}

\begin{prop} \label{prop inv HT L2 alex}
Let $P$ and $Q$ be two Wirtinger presentations with deficiency one of the same knot group $G_K$, and let $X_P \subset \C^*$ (resp. $X_Q$) be the set of $t$ such that $(P,t)$ (resp. $(Q,t)$) has Property $\mathcal{I}$.

Then $X_P=X_Q$ and there is an integer $m$ such that $\Delta^{(2)}_{K,Q}(t) = \Delta^{(2)}_{K,P}(t) \cdot  |t|^m$ for all $t$ in $X_P$.
\end{prop}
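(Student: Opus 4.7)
The approach is to connect $P$ and $Q$ by a finite sequence of Tietze transformations and track how $\psi_{K,t}(F_{P,1})$ and its Fuglede--Kadison determinant change under each elementary move. Since $P$ and $Q$ present the same finitely presented group $G_K$, a classical result gives such a sequence; by pairing a T1 (addition of a consequence relation) with a T3 (addition of a generator together with a defining relation), and similarly for their inverses T2 and T4, one may assume every intermediate presentation is of deficiency one and has all generators mapping to $1$ under $\alpha_K$. It then suffices to verify the invariance (up to a factor $|t|^{m_0}$ for some integer $m_0$) under each of: (a) a T3/T4 move, (b) a substitution $g_i \mapsto g_i g_j^{\pm 1}$ or $g_i \mapsto u g_i u^{-1}$, and (c) an elementary operation $r_j \mapsto r_j^{-1}$, $r_j \mapsto u r_j u^{-1}$, or $r_j \mapsto r_j r_l$.

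The key computation is the T3 move, adding a generator $g_{k+1}$ and a relation $r_k = g_{k+1}^{-1} w$ with $w \in \mathbb{F}[g_1,\ldots,g_k]$ and $\alpha_K(w) = 1$. Ordering the new generator and new relation last, the new Fox matrix takes the block form
\[
F_{P'} = \begin{pmatrix} F_P & \mathbf{c} \\ 0 \cdots 0 & -g_{k+1}^{-1} \end{pmatrix},
\]
because $\partial r_k / \partial g_{k+1} = -g_{k+1}^{-1}$ and $\partial r_k / \partial g_i$ for $i \leq k$ does not involve $g_{k+1}$. Deleting the first row preserves this block triangularity, and Proposition \ref{prop operations det} (5) applied to $\psi_{K,t}(F_{P',1})$ yields
\[
\det\nolimits_{\mathcal{N}(G_K)}\!\left(R_{\psi_{K,t}(F_{P',1})}\right) = \Delta^{(2)}_{K,P}(t) \cdot \det\nolimits_{\mathcal{N}(G_K)}\!\left(R_{-t^{-1}g_{k+1}^{-1}}\right) = \Delta^{(2)}_{K,P}(t)\cdot|t|^{-1},
\]
since $R_{g_{k+1}^{-1}}$ is unitary (FK-determinant $1$) and the scalar factor $-t^{-1}$ contributes $|t|^{-1}$ by Proposition \ref{prop operations det} (2). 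Injectivity of the block-triangular matrix reduces to that of $R_{\psi_{K,t}(F_{P,1})}$ (the scalar block is always injective), so $X_P = X_{P'}$; the inverse T4 move contributes a factor $|t|^{+1}$.

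Moves of types (b) and (c) modify $\psi_{K,t}(F_{P,1})$ by left or right multiplication by elementary matrices with entries in $\mathbb{C}[G_K]$ whose diagonal blocks are right-multiplications by group elements (hence unitaries on $\ell^2(G_K)$); by Proposition \ref{prop operations det} parts (2) and (4), both the Fuglede--Kadison determinant and injectivity are preserved. The main obstacle is handling moves that effectively change which row is deleted (when the original first generator is itself substituted or renumbered). For this step I would invoke the Fox-calculus identity $\sum_{i=1}^k \overline{\partial r_j / \partial g_i}\cdot(g_i - 1) = 0$ in $\mathbb{C}[G_K]$, valid for each $j$, which after applying $\psi_{K,t}$ yields a $\mathbb{C}[G_K]$-linear relation among the rows of $\psi_{K,t}(F_P)$ weighted by the entries $tg_i - 1$; combined with Proposition \ref{prop id - tg}, which gives $\det_{\mathcal{N}(G_K)}(R_{tg_i - 1}) = \max(1,|t|)$ independently of $i$, this implies that Property $\mathcal{I}$ and the value $\det_{\mathcal{N}(G_K)}(R_{\psi_{K,t}(F_{P,i})})$ do not depend on the choice of deleted row $i \in \{1,\ldots,k\}$. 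Accumulating the contributions from all elementary moves along the Tietze sequence produces $\Delta^{(2)}_{K,Q}(t) = \Delta^{(2)}_{K,P}(t)\cdot|t|^m$ for a single integer $m$ (essentially the signed count of T3 and T4 moves employed), together with $X_P = X_Q$.
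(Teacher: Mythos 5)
Your overall strategy coincides with the paper's: connect $P$ and $Q$ by elementary Tietze moves, track the effect on the Fox matrix and its Fuglede--Kadison determinant via Proposition \ref{prop operations det}, and use the fundamental Fox identity $\sum_i \overline{\partial r_j/\partial g_i}\,(\overline{g_i}-1)=0$ together with Proposition \ref{prop id - tg} to handle the ambiguity in which row is deleted. The move-by-move computations you sketch (block-triangular form for generator addition, elementary column operations for relator moves) are correct and essentially identical to the ones in the paper.

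There is, however, one genuine gap, and it sits at the crux of the argument: your assertion that the Tietze sequence ``may be assumed'' to pass only through deficiency-one presentations \emph{all of whose generators map to $1$ under $\alpha_K$}. General Tietze theory gives no such control --- a T3 move may introduce a generator equal to an arbitrary word $w$, with $\alpha_K(w)$ arbitrary (compare the presentation $\langle a,b \mid a^2b^{-3}\rangle$ of the trefoil group, where the generators have degrees $3$ and $2$). This hypothesis is not cosmetic: your row-deletion-independence step rests on $\det_{\mathcal{N}(G_K)}\bigl(R_{\psi_{K,t}(g_i-1)}\bigr)=\max(1,|t|)$ being the \emph{same} for every generator, which holds precisely because $\alpha_K(g_i)=1$ for all $i$. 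If some intermediate generator had $\alpha_K(g_i)=a\neq \pm 1$, the corresponding factor would be $\max(1,|t|^{a})=|t|^{(a-|a|)/2}\max(1,|t|)^{|a|}$, and comparing two row deletions would introduce a discrepancy $\max(1,|t|)^{|a|-1}$, which is not a power of $|t|$; the conclusion $\Delta^{(2)}_{K,Q}(t)=\Delta^{(2)}_{K,P}(t)\cdot|t|^m$ would then fail as stated. The paper closes exactly this gap by \emph{not} invoking general Tietze theory: it uses Wada's Lemma 6, which says that two Wirtinger presentations of the same knot are related by the specific moves $I_a, I_b, I_c, II_W, III$, where the only generator-adding move $II_W$ introduces a generator of the form $x_jx_ix_j^{-1}$ or $x_j^{-1}x_ix_j$ --- a conjugate of an existing generator, hence still of $\alpha_K$-degree $1$. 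To repair your proof you either need this diagram-theoretic input, or you need to work with the normalization $\max(1,t)^{|\alpha_K(g_1)|-1}$ of Theorem \ref{thm pas wirtinger} throughout, which is a different (and here circular-looking) route.
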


The proof of this proposition is somewhat technical. It is based on a study of Tietze transformations between Wirtinger presentations and of how the respective associated operators are consequently modified by these transformations. Compare with \cite[Section 5]{Wada} and \cite[Proposition 3.4]{LZ06a}. We include the following detailed proof for the sake of completeness.

\begin{proof}
Let $P$ and $Q$ be two Wirtinger presentations with deficiency one of the same knot group $G_K$. This means that $P$ and $Q$ were constructed respectively from two diagrams $D$ and $D'$ of the same knot $K$. Therefore $D'$ is obtained from $D$ by a finite sequence of planar isotopies and Reidemeister moves. As explained in \cite[Lemma 6]{Wada}, this means that $Q$ can be obtained from $P$ by a finite sequence of certain Tietze transformations (and their inverses), that are:
\begin{itemize}
\item $I_a$. To replace one of the relators $r_i$ by its inverse $r_i^{-1}$
\item $I_b$. To replace one of the relators $r_i$ by its conjugate $w r_i w^{-1}$ where $w$ is a word in the generators.
\item $I_c$. To replace one of the relators $r_i$ by its product $r_i r_k$ with a different relator ($k \neq i$)
\item $II_W$. To add a new generator $x$ and a new relator $x=w$ where $w$ is of the form $x_j x_i x_j^{-1}$ or $x_j^{-1} x_i x_j$ with $x_i$ and $x_j$ some previous generators.
\item $III$. To apply a permutation on the generators.
\end{itemize}
Note that we specified a new transformation $III$ to describe the ambiguity in ordering the generators during the Wirtinger process, and that we only use $II_W$ and not the $II$ of \cite[Section 1]{Wada} because it is sufficient to describe the modifications caused by Reidemeister moves and it helps us ensure the following fact:
if a sequence of such Tietze moves transforms the Wirtinger presentation $P$ into the Wirtinger presentation $Q$, then all intermediate presentations are not necessarily of the Wirtinger form but they all have the fundamental property that \textit{their generators are all conjugates of one another}.

To establish the proposition, it suffices to prove that if $Q$ is obtained from $P$ by a single previous transformation, then $X_P = X_Q$ and there is an integer $m$ such that ${\Delta^{(2)}_{K,Q}(t) = \Delta^{(2)}_{K,P}(t) \cdot  |t|^m}$ for all $t$ in $X_P$.

Firstly, if $Q$ is obtained from $P$ by a $I_a$ move, for example the $j$-th relator $r$ is changed to $r^{-1}$, then by construction the respective free groups in the generators and quotient maps are the same (notably $Gr(P) = Gr(Q)$). 
Remark that this will also be the case for moves of type $I_b$, $I_c$ and $III$.
Since 
$$\overline{ \dfrac{\partial}{\partial x}(r^{-1}) }
=  \overline{\left ( - r^{-1} \dfrac{\partial}{\partial x}(r) \right )}
= - \overline{ \dfrac{\partial}{\partial x}(r) },
$$ we deduce that $\psi_{K,t}(F_{Q,1})$ is simply $\psi_{K,t}(F_{P,1})$ with the $j$-th column multiplied by $-1$. Therefore the right-multiplication associated operators are both injective for the same values of $t$, i.e. $X_P = X_Q$, and furthermore $\Delta^{(2)}_{K,Q}(t) = \Delta^{(2)}_{K,P}(t)$, by Proposition \ref{prop operations det} (2) and (4).

Secondly, if $Q$ is obtained from $P$ by a $I_b$ move, for example the $j$-th relator $r$ is changed to $w r w^{-1}$ with $w$ a word in the generators, then since 
\begin{align*}
  \overline{ \dfrac{\partial}{\partial x}(w r w^{-1}) }
&=   \overline{ \dfrac{\partial}{\partial x}(w) + w \dfrac{\partial}{\partial x}(r) + wr\dfrac{\partial}{\partial x}(w^{-1}) } \\
  &=  \overline{  \dfrac{\partial}{\partial x}(w) + w \dfrac{\partial}{\partial x}(r) + wr\left (-w^{-1} \dfrac{\partial}{\partial x}(w) \right ) } \\
  &=  \overline{w} \cdot  \overline{ \dfrac{\partial}{\partial x}(r) }
, 
\end{align*}
 we deduce that $\psi_{K,t}(F_{Q,1})$ is simply $\psi_{K,t}(F_{P,1})$ with the $j$-th column multiplied on the left by $\psi_{K,t}(\overline{w}) = t^m \overline{w} $ where $m$ is an integer. Thus the associated right-multiplication  operators are equal up to composition by the diagonal operator of $j$-th coefficient $R_{t^m w}$  and other coefficients $Id$; this operator is invertible and of Fuglede-Kadison determinant $|t|^m$. Therefore $\psi_{K,t}(F_{Q,1})$ and $\psi_{K,t}(F_{P,1})$ 
 are both injective for the same values of $t$, i.e. $X_P = X_Q$, and furthermore $\Delta^{(2)}_{K,Q}(t) = \Delta^{(2)}_{K,P}(t) \cdot  |t|^{n}$, by Proposition \ref{prop operations det} (2) and (4).

Thirdly, if $Q$ is obtained from $P$ by a $I_c$ move, for example the $j$-th relator $r$ is changed to $r r'$ with $r'$ the $l$-th relator, then since 
$$\overline{\dfrac{\partial}{\partial x}(r r') }
= \overline{  \dfrac{\partial}{\partial x}(r) + r \dfrac{\partial}{\partial x}(r')} 
= \overline{ \dfrac{\partial}{\partial x}(r) } + \overline{\dfrac{\partial}{\partial x}(r') }
,$$
we deduce that $\psi_{K,t}(F_{Q,1})$ is simply $\psi_{K,t}(F_{P,1})$ where the $l$-th column was added to the $j$-th one. Proposition  \ref{prop operations det} (2) and (4) let us conclude that $X_P = X_Q$ (composing by an invertible transvection operator does not change the injectivity) and that $\Delta^{(2)}_{K,Q}(t) = \Delta^{(2)}_{K,P}(t)$ (since a transvection operator has Fuglede-Kadison determinant $1$).

Fourthly, suppose that $Q$ is obtained from $P$ by a $II_W$ move, then write $P = \big\langle g_1, \ldots, g_k \, \big| \, r_1, \ldots r_{k-1} \big\rangle$ and $Q = \big\langle g_1, \ldots, g_k, h \, \big| \, r_1, \ldots r_{k-1}, w h^{-1} \big\rangle$  where $w$ is a word in the $g_i$.
Here $Gr(P)$ and $Gr(Q)$ are naturally isomorphic via
$$ Gr(P) = \F[g_i]/\langle r_j \rangle \hookrightarrow \F[g_i,h] / \langle r_j \rangle \twoheadrightarrow \F[g_i,h] / \langle r_j, wh^{-1} \rangle = Gr(Q)$$
(where $\langle r_j \rangle$ is the normal generated subgroup), therefore we dare an abuse of notation by writing
$$F_Q=\kbordermatrix{
 & r_1 & \ldots & r_{k-1} & wh^{-1} \\
g_1 & & & & *\\
\vdots & &  F_P & & \vdots \\
g_{k} & & & & * \\
h & 0 & \ldots & 0& -1 
}
$$
where the $*$ are elements of $\Z[G_K]$.
Thus $R_{\psi_{K,t} \left ( F_{Q,1}\right )}$ is injective if and only if $R_{\psi_{K,t} \left ( F_{P,1}\right )}$ is injective, i.e. $X_P=X_Q$. Hence, by Proposition \ref{prop operations det} (2) and (5),
${\Delta^{(2)}_{K,Q}(t) = \Delta^{(2)}_{K,P}(t)}$ for all $t\in X_P$.

Finally, suppose that $Q$ is obtained from $P$ by a $III$ move.  A permutation is a finite product of transpositions, therefore we can assume that the $III$ move is a transposition $\tau$.

Let us assume that $\tau$ leaves the first generator fixed. In this case the Fox matrix $F_{Q,1}$ is $F_{P,1}$ with two of its rows swapped, i.e. $F_{Q,1}$ is equal to $F_{P,1}$ multiplied by a permutation matrix $S$. Since the associated operator ${R_{\psi_{K,t}(S)} = R_S}$ is unitary, it is invertible and has Fuglede-Kadison determinant $1$. Thus $R_{\psi_{K,t} \left ( F_{Q,1}\right )}$ is injective if and only if $R_{\psi_{K,t} \left ( F_{P,1}\right )}$ is injective, i.e. $X_P=X_Q$. Hence, by Proposition \ref{prop operations det} (5), ${\Delta^{(2)}_{K,Q}(t) = \Delta^{(2)}_{K,P}(t)}$ for all $t\in X_P$.

Now let us assume that $\tau$ swaps the first and second generators. We write $F_P = \begin{pmatrix} L_1 \\ L_2 \\ \vdots \\ L_k \end{pmatrix}$ and
$F_Q = \begin{pmatrix} L_2 \\ L_1 \\ \vdots \\ L_k \end{pmatrix}$ where  $L_i = \left (\overline{  \dfrac{\partial r_j}{\partial g_i} }  \right )_{1 \leqslant j \leqslant k} $ denotes the $i$-th row of $F_P$.  Let us remind the reader that the generators $g_i$ are conjugates of one another, therefore they have the same image $1$ by the abelianization $\alpha_K$, which means that $\psi_{K,t}(g_i) = t g_i -1 $ for each $i$.

The fundamental formula of Fox calculus (see for instance \cite[Proposition 9.8]{BZ}) states that the following formula stands in $\C[G_K]$:  \begin{equation}
\tag{*}
\sum_{i=1}^k L_i \cdot (\overline{g_i} - 1) = 0.
\end{equation}\\
Let $A = \begin{pmatrix} 
R_{t g_2 - 1} & & & 0\\
 & R_{t g_3 - 1} & & \\
& & \ddots & \\
 0& & & R_{t g_k - 1} 
 \end{pmatrix} $,
$B = \begin{pmatrix} 
R_{t g_1 - 1} & & & 0\\
 & R_{t g_3 - 1} & & \\
& & \ddots & \\
 0& & & R_{t g_k - 1} 
 \end{pmatrix} $ and
$C = \begin{pmatrix} 
-Id & -Id & \ldots & -Id\\
 & Id & & 0 \\
& & \ddots & \\
 0& & & Id
 \end{pmatrix} $. We recognize a transvection matrix in $C$, which is thus invertible and with determinant $1$.
Proposition \ref{prop id - tg} tells us that $A$ and $B$ are injective and that their Fuglede-Kadison determinant is $\max(1,|t|)^{k-1}$.

The formula $(*)$ implies the following equality for operators:
\begin{align*}
C \circ A \circ R_{\psi_{K,t}(F_{P,1})} &=
\begin{pmatrix} -R_{\psi_{K,t} \left (L_1(g_1-1)\right )}
- R_{\psi_{K,t} \left (L_3(g_3-1)\right )} - \ldots -R_{\psi_{K,t} \left (L_k(g_k-1)\right )}
 \\ R_{\psi_{K,t} \left (L_3(g_3-1)\right )} \\ \vdots \\ R_{\psi_{K,t} \left (L_k(g_k-1)\right )} \end{pmatrix} \\
 &= \begin{pmatrix}  R_{\psi_{K,t} \left (L_2(g_2-1)\right )}
 \\ R_{\psi_{K,t} \left (L_3(g_3-1)\right )} \\ \vdots \\ R_{\psi_{K,t} \left (L_k(g_k-1)\right )} \end{pmatrix} 
 = B \circ R_{\psi_{K,t}(F_{Q,1})}
\end{align*}
Since $C, A$ and $B$ are injective, $R_{\psi_{K,t}(F_{P,1})}$ is injective if and only if $R_{\psi_{K,t}(F_{Q,1})}$ is injective, i.e. $X_P = X_Q$. Finally, by Proposition \ref{prop operations det} (4) and the values of the determinants of $A, B, C$, we conclude that ${\Delta^{(2)}_{K,Q}(t) = \Delta^{(2)}_{K,P}(t)}$ for all $t\in X_P$.

Any permutation can be decomposed as a finite product of transpositions swapping the first and second elements and transpositions leaving the first element fixed. Therefore the case of the $III$ move is treated, and the proposition is proven.
\end{proof}

\begin{defn}
Let $K$ be a knot. 
Let $P$ be any Wirtinger presentation of its knot group $G_K$.
Let $X_K$ be the set of $t\in  \C^*$ such that $(P,t)$ has Property $\mathcal{I}$ (according to the previous proposition, this does not depend on $P$).
The \textit{$L^2$-Alexander invariant of $K$ at $t$} is written $\left (t \mapsto \Delta^{(2)}_{K}(t)\right )$ and is defined as the class of $\left (t \mapsto \Delta^{(2)}_{K,P}(t)\right )$ up to multiplication by $(t \mapsto |t|^{\Z})$ on the maps from $X_K$ to $\R_{\geqslant 0}$.

It is a knot invariant by the previous proposition.
\end{defn}

\begin{rem}
Until now we know of no knots $K$ such that $X_K \neq \C^*$. However we know that $X_K$ always contains at least the entire unit circle, thanks to Theorem \ref{thm L2 volume}.
\end{rem}

\begin{rem} \label{rem switch generators}
Let us remark that we can take $F_{P,i}$ for any $i \neq 1$ instead of $F_{P,1}$ in the definition of the invariant, since it simply corresponds to an other Wirtinger presentation where the generators are permuted.
\end{rem}

\begin{ex} \label{ex L2 unknot}
Let us compute the invariant for the trivial knot $O$.

\begin{figure}[!h]
\centering
\begin{tikzpicture} [every path/.style={string } , every node/.style={transform shape , knot crossing , inner sep=1.5 pt } ] 
\begin{scope}[xshift=-1cm,scale=1]
	\node[rotate=0] (l) at (-1,0) {};
	\node[rotate=0] (r) at (1, 0 ) { } ;
	
\draw (l.center) .. controls (l.16 north east) and (r.16 north west) .. (r) ;
\draw (r) .. controls (r.32 south east) and (r.32 north east) .. (r.center) ;
\draw (r.center) .. controls (r.16 south west) and (l.16 south east) .. (l) ;
\draw (l) .. controls (l.32 north west) and (l.32 south west) .. (l.center) ;
\end{scope}
\end{tikzpicture}
\caption{A diagram for the unknot} \label{figure doubly twisted}
\end{figure}
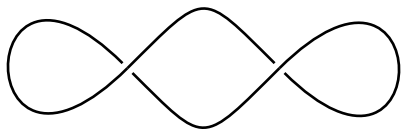

The  \guillemotleft  doubly twisted rubber band\guillemotright \ knot diagram of Figure \ref{figure doubly twisted} gives the Wirtinger presentation $P = \langle g, h | g h^{-1} \rangle$ of the unknot group $G_O$ (which is isomorphic to $\Z$), and the associated Fox matrix is $F_{P} = \begin{pmatrix} 1 \\ -1 \end{pmatrix}$.

Therefore for all $t$ in $\C^*$ , $R_{\psi_{O,t}(F_{P,1})} = -Id\colon l^2(G_O) \to l^2(G_O)$ has Property $\mathcal{I}$ and $\Delta_{O,P}^{(2)}(t)=1$.
Thus, the invariant for the trivial knot is the constant map equal to $1$.
\end{ex}

The following result is proven for the unit circle in \cite[Section 6]{LZ06a} and can be easily extended to $\C^*$.

\begin{prop}
(1) Let $K$ be a knot and $P$ a Wirtinger presentation of $G_K$, and let $t \in \C^*$. Then $(P,t)$ has Property $\mathcal{I}$ if and only if $(P,|t|)$ has Property $\mathcal{I}$.

(2) Let $K$ be a knot and $t \in \C ^*$, such that there is a Wirtinger presentation $P$ with $(P,t)$ having Property $\mathcal{I}$. Then $\Delta_K^{(2)}(t) = \Delta_K^{(2)}(|t|)$.
\end{prop}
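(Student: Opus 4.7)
The plan is to introduce a single unitary phase twist that conjugates the operator at $t$ into the operator at $|t|$. Write $t = |t|\, e^{i\theta}$. Since every generator of a Wirtinger presentation $P = \langle g_1, \dots, g_k \mid r_1, \dots, r_{k-1}\rangle$ satisfies $\alpha_K(g_j) = 1$, define the unitary $U_\theta \colon \ell^2(G_K) \to \ell^2(G_K)$ on basis elements by $U_\theta [g] := e^{i\theta \alpha_K(g)} [g]$ and extend it coordinatewise to $\ell^2(G_K)^{k-1}$. A short computation shows that for any $v = \sum_g c_g \cdot g \in \C[G_K]$ one has $U_\theta \circ R_v \circ U_\theta^{-1} = R_{\tilde v}$, where $\tilde v := \sum_g c_g\, e^{i\theta \alpha_K(g)} \cdot g$; since $\widetilde{\psi_{K,|t|}(v)} = \psi_{K,t}(v)$ for every $v$, applying this entry by entry to the Fox matrix gives the key identity
\[
R_{\psi_{K,t}(F_{P,1})} \;=\; U_\theta \circ R_{\psi_{K,|t|}(F_{P,1})} \circ U_\theta^{-1}.
\]
Because $U_\theta$ is a bijection, both sides are simultaneously injective, which proves (1).

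For (2) the subtlety is that $U_\theta$ is not $G_K$-equivariant, hence does not belong to $\mathcal{N}(G_K)$, so conjugation by $U_\theta$ is not an inner automorphism. However, from $U_\theta L_h U_\theta^{-1} = e^{i\theta \alpha_K(h)} L_h$ one sees that if $A$ commutes with every $L_h$, then so does $U_\theta A U_\theta^{-1}$. The essential observation is that $\alpha_K(e) = 0$ forces $U_\theta[e] = [e] = U_\theta^{*}[e]$, so for every bounded operator $B$,
\[
\mathrm{tr}_{\mathcal{N}(G_K)}\bigl(U_\theta B U_\theta^{-1}\bigr) \;=\; \langle U_\theta B U_\theta^{-1}[e],\,[e]\rangle \;=\; \langle B[e],\,[e]\rangle \;=\; \mathrm{tr}_{\mathcal{N}(G_K)}(B);
\]
the same preservation extends coordinatewise to $M_{k-1}(\mathcal{N}(G_K))$.

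The final step is to translate the conjugation identity into an equality of spectral densities. For any sub-$\mathcal{N}(G_K)$-module $L \subseteq \ell^2(G_K)^{k-1}$ on which $R_{\psi_{K,|t|}(F_{P,1})}$ has norm $\leqslant \lambda$, the closed subspace $U_\theta L$ is again $G_K$-invariant, since $L_h \cdot U_\theta L = U_\theta\bigl(e^{-i\theta\alpha_K(h)} L_h L\bigr) \subseteq U_\theta L$, and by the key identity $R_{\psi_{K,t}(F_{P,1})}$ has norm $\leqslant \lambda$ on $U_\theta L$. Since $\mathrm{pr}_{U_\theta L} = U_\theta\, \mathrm{pr}_L\, U_\theta^{-1}$, the trace preservation gives $\dim_{\mathcal{N}(G_K)}(U_\theta L) = \dim_{\mathcal{N}(G_K)}(L)$. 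Thus $L \mapsto U_\theta L$ (with inverse $L' \mapsto U_\theta^{-1} L'$) is a dimension-preserving bijection between $\mathcal{L}(R_{\psi_{K,|t|}(F_{P,1})}, \lambda)$ and $\mathcal{L}(R_{\psi_{K,t}(F_{P,1})}, \lambda)$; taking suprema gives $F(R_{\psi_{K,t}(F_{P,1})}) = F(R_{\psi_{K,|t|}(F_{P,1})})$ as functions, and integrating $\log \lambda$ yields $\Delta_{K,P}^{(2)}(t) = \Delta_{K,P}^{(2)}(|t|)$, which is (2).

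The main obstacle is conceptual rather than computational: one must resist concluding (2) directly from the unitary conjugation $R_{\psi_{K,t}(F_{P,1})} = U_\theta R_{\psi_{K,|t|}(F_{P,1})} U_\theta^{-1}$, since a generic unitary outside $\mathcal{N}(G_K)$ need not preserve $\mathrm{tr}_{\mathcal{N}(G_K)}$. What rescues the argument is the very special fact that $U_\theta$ fixes the neutral vector $[e]$, a consequence of $\alpha_K(e) = 0$, and this is exactly what makes the two Fuglede-Kadison determinants coincide.
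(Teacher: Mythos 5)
Your proof is correct, and it supplies precisely the details that the paper omits: the paper only cites Li--Zhang (Section 6 of \cite{LZ06a}) for the unit-circle case and asserts the extension to $\C^*$ is easy. Your argument --- conjugating by the diagonal phase unitary $U_\theta[g]=e^{i\theta\alpha_K(g)}[g]$, checking it normalizes $\mathcal{N}(G_K)$ and fixes $[e]$ so that $\mathrm{tr}_{\mathcal{N}(G_K)}$ is preserved, and transporting the spectral density accordingly --- is essentially the standard one underlying the cited reference, and you correctly flag the one genuine subtlety (that an arbitrary unitary outside $\mathcal{N}(G_K)$ would not preserve the trace).
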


We will now always assume $t>0$. The $L^2$-Alexander invariant is thus a class of maps from $\R_{>0}$ to $\R_{\geqslant 0}$ (up to multiplication by $(t \mapsto t^m), m \in \Z$).

The following theorem was proven by Lück for the $L^2$-torsion, but, similarly to Milnor's proof that the Alexander polynomial can be seen as a Reidemeister torsion, we can express the $L^2$-Alexander invariant of $K$ as a simple function of a $L^2$-torsion of $M_K$ (see for example \cite[Section 5]{LZ06a}).

\begin{thm}[\cite{Luc02}, Theorem 4.6] \label{thm L2 volume}
If $K$ is a non-trivial knot then the $3$-manifold $M_K$ is irreducible and, according to the JSJ-decomposition, splits along disjoint incompressible tori into pieces that are Seifert manifolds or hyperbolic manifolds. 
The hyperbolic pieces $M_1 , \ldots , M_h$ have all finite hyperbolic volume, and $$
\Delta_K^{(2)}(1) = \exp \left ({\frac{1}{6\pi}\sum_{i=1}^{h}vol(M_i)}\right )
= \exp \left ({\frac{1}{6\pi} \| M_K \|}\right )
$$
where $vol$ is the hyperbolic volume and $\|  . \|$  is the Gromov norm.
\end{thm}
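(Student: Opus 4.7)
The statement has two distinct equalities to establish, plus structural claims about $M_K$, so my plan breaks the argument into a geometric/topological setup, an $L^2$-torsion identification, and a Gromov-norm comparison.

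First I would dispose of the structural statements. For a nontrivial knot $K \subset S^3$, irreducibility of $M_K$ follows classically: a reducing sphere would bound a ball on one side (by irreducibility of $S^3$) and an embedded disk in the complement would, by Papakyriakopoulos's Dehn lemma/sphere theorem, force $K$ to be the unknot, contradicting our hypothesis. With $M_K$ compact, orientable, irreducible, with incompressible toroidal boundary, the Jaco--Shalen--Johannson decomposition produces a canonical (minimal) family of disjoint incompressible tori splitting $M_K$ into Seifert-fibered pieces and atoroidal pieces; by Thurston's hyperbolization for Haken manifolds, the atoroidal pieces carry complete finite-volume hyperbolic structures. This gives $M_1,\ldots,M_h$ as in the statement.

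Next, the first equality. The plan is to express $\Delta_K^{(2)}(1)$ as (the exponential of minus) the $L^2$-torsion $\rho^{(2)}(M_K)$ of the knot exterior, following the Milnor-type reinterpretation carried out by Li--Zhang in \cite[Section 5]{LZ06a}: one uses a CW-structure on $M_K$ whose cellular $L^2$-chain complex, up to a factor coming from meridional cells, has boundary maps given by right-multiplication by $\psi_{K,1}(F_{P,1}) = F_{P,1}$, so that the Fuglede--Kadison determinant computing $\Delta_{K,P}^{(2)}(1)$ is precisely the nontrivial factor in $\exp(-\rho^{(2)}(M_K))$. Once this translation is in place, I invoke Lück's Theorem~4.6 in \cite{Luc02}, which asserts that for a compact orientable 3-manifold with toroidal (or empty) boundary satisfying the JSJ hypotheses above,
$$\rho^{(2)}(M_K) \;=\; -\frac{1}{6\pi}\sum_{i=1}^{h}\mathrm{vol}(M_i).$$
Exponentiating gives the first equality. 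The genuinely hard inputs hidden here are (a) the vanishing of $\rho^{(2)}$ on the Seifert pieces, which rests on the fact that their fundamental groups contain an infinite normal cyclic subgroup and hence any cellular $L^2$-chain complex has trivial $L^2$-torsion; (b) the Lott--Lück computation $\rho^{(2)}(M) = -\mathrm{vol}(M)/(6\pi)$ for a finite-volume hyperbolic 3-manifold, proved via the explicit heat-kernel expansion on $\mathbb{H}^3$; and (c) the gluing formula for $\rho^{(2)}$ along incompressible tori, which ensures that the pieces recombine additively.

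Finally, for the second equality I invoke Gromov's proportionality principle together with Thurston--Soma additivity of simplicial volume under gluings along incompressible tori. Seifert-fibered 3-manifolds have vanishing simplicial volume (their fundamental groups are amenable after passing to a finite cover, or one uses the $S^1$-action), and a finite-volume hyperbolic 3-manifold has simplicial volume equal to its hyperbolic volume in the chosen normalization (Gromov--Thurston). Summing over pieces yields $\sum_{i=1}^{h}\mathrm{vol}(M_i) = \|M_K\|$, completing the identification.

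The main obstacle, and the reason this is cited rather than reproved, is step (b) above: the exact evaluation of the $L^2$-analytic torsion of a finite-volume hyperbolic 3-manifold requires substantial analysis on $\mathbb{H}^3$, and extending it from closed to cusped manifolds (which is needed since JSJ pieces of a knot exterior are generically cusped) is delicate. Every other ingredient is either topological bookkeeping or a formal consequence of the Fuglede--Kadison determinant formalism already developed in the preceding subsections.
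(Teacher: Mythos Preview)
Your outline is a reasonable and essentially correct sketch of how the cited result is established, but there is nothing in the paper to compare it against: the paper does not prove this theorem. It is stated as an imported result, with the $L^2$-torsion computation attributed to L\"uck \cite[Theorem 4.6]{Luc02} and the translation from $L^2$-torsion of $M_K$ to $\Delta_K^{(2)}(1)$ attributed to Li--Zhang \cite[Section 5]{LZ06a}, exactly as you indicate. The paper's only contribution here is the one-sentence remark preceding the theorem, pointing to those two references; your proposal effectively unpacks what is behind them.

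One small caution on your Gromov-norm step: the identity $\sum_i \mathrm{vol}(M_i) = \|M_K\|$ as written is a normalization-dependent statement. The standard Gromov--Thurston proportionality gives $\|M\| = \mathrm{vol}(M)/v_3$ for a hyperbolic piece, with $v_3$ the volume of a regular ideal tetrahedron, so the second equality in the theorem is using a particular (and somewhat nonstandard) normalization of $\|\cdot\|$; you should flag this rather than assert equality of volume and simplicial volume outright. Also, your parenthetical that Seifert pieces have amenable fundamental group after a finite cover is not quite the right justification; the usual argument is that they admit an $S^1$-action (or a self-map of degree $>1$), which forces simplicial volume zero.
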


Hence we now have the value of the invariant on one point. It is $1$ for torus knots, and for $K$ a hyperbolic knot it is an exponential of its hyperbolic volume, which was already known to be a strong knot invariant. We hope that the values for $t \neq 1$ can give relevant additional information.

To conclude this section, let us mention that we do not need to use a Wirtinger presentation $P$ to compute $\Delta_{K}^{(2)}(t)$. 

\begin{thm}[\cite{DW}, Theorem 3.5 and Proposition 6.2] \label{thm pas wirtinger}
\ 

(1) Let $K$ be a knot, $G_K$ its knot group, and $P = \big\langle g_1, \ldots, g_k \, \big| \, r_1, \ldots r_{k-1} \big\rangle$ any deficiency one presentation of $G_K$. If $t >0$ is such that $(P,t)$ has Property $\mathcal{I}$, then 
$ \dfrac{\mathrm{det}_{\mathcal{N}(G_K)}(R_{\psi_{K,t}(F_{P,1})}) }
{ \max(1,t)^{| \alpha_{K} (g_1) | -1}  } $
does not depend on $P$, and is equal to $\Delta^{(2)}_{K,P}(t)$ when $P$ is Wirtinger.
Thus we will also call this quantity $\Delta^{(2)}_{K,P}(t)$.

(2) If $K$ is the $(p,q)$-torus knot, then for any $t>0$, 
$\Delta_K^{(2)}(t)$ is defined and equals $\max(1,t)^{(|p|-1)(|q|-1)}$.
\end{thm}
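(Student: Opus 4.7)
\emph{Part (1).} The plan is first to verify that the stated formula already agrees with the Wirtinger definition of $\Delta^{(2)}_{K,P}(t)$: in a Wirtinger presentation every generator is a meridian conjugate, so $\alpha_{K}(g_1) = 1$ and the correction factor $\max(1,t)^{|\alpha_{K}(g_1)|-1}$ equals $1$. It then suffices to prove invariance, modulo multiplication by $|t|^{\Z}$, under the Tietze moves that connect any two deficiency-one presentations of $G_K$: namely $I_a, I_b, I_c$, the generalized $II$-move (adding a generator $x$ with relation $xw^{-1}$ for $w$ an arbitrary word in the existing generators), and the generator-permutation $III$.

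I would reproduce the case analysis in the proof of Proposition \ref{prop inv HT L2 alex}. For moves of types $I_a, I_b, I_c, II$ the distinguished generator $g_1$ is untouched, hence $|\alpha_K(g_1)|$ is unchanged and the earlier computations transfer verbatim (for the $II$-move, $F_{Q,1}$ is block-triangular with diagonal blocks $F_{P,1}$ and $(1)$, so Proposition \ref{prop operations det}(5) applies). The substantive case is a $III$-move swapping $g_1$ with some $g_i$. Here I would apply $\psi_{K,t}$ to the fundamental Fox identity, obtaining $\sum_j \psi_{K,t}(L_j)\cdot(t^{\alpha_K(g_j)}g_j - 1) = 0$, and run the same transvection/diagonal argument as in Proposition \ref{prop inv HT L2 alex} to get
\[
B \circ R_{\psi_{K,t}(F_{Q,1})} \ =\ C \circ A \circ R_{\psi_{K,t}(F_{P,1})},
\]
where $C$ is a transvection (determinant $1$), $A = \mathrm{diag}(R_{t^{\alpha_K(g_j)}g_j - 1})_{j \neq 1}$, and $B$ is the analogous diagonal with $j \neq i$. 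By Proposition \ref{prop id - tg} each diagonal factor has Fuglede--Kadison determinant $\max(1, t^{\alpha_K(g_j)})$, which coincides with $\max(1,t)^{|\alpha_K(g_j)|}$ up to a factor $t^m$. Thus $\det(A)/\det(B)$ contributes $\max(1,t)^{|\alpha_K(g_i)| - |\alpha_K(g_1)|}$ modulo $|t|^{\Z}$, which is precisely the discrepancy needed to pass from the $P$-correction factor $\max(1,t)^{|\alpha_K(g_1)|-1}$ to the $Q$-correction factor $\max(1,t)^{|\alpha_K(g_i)|-1}$. The main obstacle is careful sign-bookkeeping: one must verify that $\max(1,t^n)$ and $\max(1,t)^{|n|}$ always differ by a factor of the form $t^m$ with $m \in \Z$, so that working modulo $(t \mapsto t^{\Z})$ is legitimate.

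\emph{Part (2).} For the torus knot $T(p,q)$, assuming $p,q > 0$ for concreteness, I would compute directly with the deficiency-one presentation $P = \langle a, b \mid a^p b^{-q} \rangle$, noting that $\alpha_K(a) = q$ and $\alpha_K(b) = p$. A standard Fox calculation followed by the substitution $a^p = b^q$ in $G_K$ yields $\overline{\partial r / \partial b} = -(1 + b + \cdots + b^{q-1})$; deleting the $a$-row gives the $1 \times 1$ matrix $F_{P,1}$, and $R_{\psi_{K,t}(F_{P,1})} = -\sum_{k=0}^{q-1}(t^p R_b)^k$. The key trick is to factor this polynomial in $t^p R_b$ via the $q$-th roots of unity $\zeta = e^{2\pi i / q}$:
\[
\sum_{k=0}^{q-1}(t^p R_b)^k \ =\ \prod_{j=1}^{q-1}\bigl(t^p R_b - \zeta^j\, Id\bigr) \ =\ \prod_{j=1}^{q-1}(-\zeta^j)\bigl(Id - \zeta^{-j} t^p R_b\bigr).
\]
Since $b$ has infinite order, each factor $Id - \zeta^{-j} t^p R_b$ is injective, so the product is injective and Property $\mathcal{I}$ holds; by Proposition \ref{prop id - tg} each factor has Fuglede--Kadison determinant $\max(1, |\zeta^{-j} t^p|) = \max(1, t)^p$. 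Proposition \ref{prop operations det}(4) then gives $\det_{\mathcal{N}(G_K)}(R_{\psi_{K,t}(F_{P,1})}) = \max(1,t)^{p(q-1)}$. Dividing by the correction factor $\max(1,t)^{|\alpha_K(a)|-1} = \max(1,t)^{q-1}$ yields the announced value $\max(1,t)^{(p-1)(q-1)}$. The cases of negative $p$ or $q$ follow by symmetry, producing in general $\max(1,t)^{(|p|-1)(|q|-1)}$.
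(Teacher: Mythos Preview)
The paper does not prove this theorem; it is quoted from \cite{DW}, so there is no in-paper proof to compare against. I will assess your argument on its own merits.

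Your Part (2) is correct. The roots-of-unity factorization
\[
\sum_{k=0}^{q-1}(t^p R_b)^k=\prod_{j=1}^{q-1}(-\zeta^j)\bigl(Id-\zeta^{-j}t^p R_b\bigr)
\]
is a clean alternative to the telescoping identity $(Id-t R_g)\circ\bigl(\sum_{k=0}^{q-1}(tR_g)^k\bigr)=Id-t^q R_{g^q}$ that the paper uses later (Lemma \ref{lem 1+t+t2}); both routes give the same determinant via Proposition \ref{prop id - tg}. Your sign and exponent bookkeeping is fine.

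Your Part (1) has a genuine gap. You assert that any two deficiency-one presentations of $G_K$ are connected by the moves $I_a,I_b,I_c,II,III$, each of which preserves deficiency. This is not part of the standard Tietze theorem: the elementary Tietze moves include adding a \emph{redundant relator} (without adding a generator), and the usual chain $P\to P\cup Q\to Q$ passes through presentations whose deficiency is not one. Your case analysis never treats that move, and for such intermediate presentations $F_{P,1}$ is no longer square, so the Fuglede--Kadison determinant is not even defined in the form you use. The Wirtinger case in Proposition \ref{prop inv HT L2 alex} avoids this because the intermediate presentations are controlled by Reidemeister moves on a diagram; no such control is available for arbitrary deficiency-one presentations.

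The argument in \cite{DW} sidesteps this entirely: the quantity is identified with an $L^2$-torsion of a CW chain complex for $M_K$, whose invariance is topological (simple-homotopy invariance of $L^2$-torsion), and the Fox-matrix expression is then derived from any one presentation. If you want a purely combinatorial proof, you would need either a theorem guaranteeing a deficiency-preserving Tietze path for knot groups, or an extension of your determinant calculation to non-square Fox matrices; neither is supplied here.
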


We will use this powerful result to prove the cabling formula in Section \ref{section cabling}.

\begin{rem}
This theorem implies that the $L^2$-Alexander invariant is not a complete knot invariant. For example $T({2,7})$ and $T({3,4})$ are distinct torus knots but they both have 
$t \mapsto \max(1,t)^6$
as their $L^2$-Alexander invariant.

However the $L^2$-Alexander invariant detects if a knot is the unknot, as we will see in Section \ref{section unknot}.
\end{rem}

We can also use this theorem to compute the invariant of the mirror image of a knot.

\begin{prop}
Let $K$ be a knot in $S^3$ and $K^*$ its mirror image. Let $P$ be a Wirtinger presentation of $G_K$ and let $t>0$. Suppose $(P,t)$ has Property $\mathcal{I}$.

Then $G_{K^*}$ admits a group presentation $P^*$ naturally obtained from $P$, $(P^*,t^{-1})$ has Property $\mathcal{I}$ and 
$\Delta_{K^*}^{(2)}(t^{-1}) = \Delta_K^{(2)}(t)$.
\end{prop}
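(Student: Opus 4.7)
The plan is to take $P^* = \langle g_1,\ldots,g_k \mid r_1^*,\ldots,r_{k-1}^* \rangle$, where $r_j^* = r_j(g_1^{-1},\ldots,g_k^{-1})$, i.e.\ obtained from $P$ by applying to every relator the free-group involution $\sigma\colon g_i \mapsto g_i^{-1}$. To see that this is a deficiency-one presentation of $G_{K^*}$ in which the formal generators map to meridians, I would use the mirror homeomorphism $M_K \to M_{K^*}$: since it reverses the orientation of $S^3$ it flips signs of linking numbers, so the induced group isomorphism $\mu\colon G_K \to G_{K^*}$ sends each preferred meridian of $K$ to the inverse of a preferred meridian of $K^*$; equivalently $\alpha_{K^*}\circ\mu = -\alpha_K$. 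The involution $\sigma$ descends to an isomorphism $Gr(P^*)\xrightarrow{\sim}Gr(P)$, and composing with $\mu$ realizes $Gr(P^*)\cong G_{K^*}$ so that each formal generator $g_i$ maps to a meridian $\tilde g_i = \mu(g_i^{-1})$ of $K^*$.

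Next, the Fox chain rule applied to $\sigma$, combined with $\partial g_i^{-1}/\partial g_i = -g_i^{-1}$, gives
\[
\frac{\partial r_j^*}{\partial g_i} \;=\; -\sigma\!\left(\frac{\partial r_j}{\partial g_i}\right) g_i^{-1}
\]
in $\C[F]$. Projecting to $\C[G_{K^*}]$ and using that $\overline{\sigma(w)} = \mu(\overline{w})$ for any $w\in\C[F]$ (both projections are determined by the images of the generators and they coincide by the construction of the isomorphism $Gr(P^*)\cong G_{K^*}$), this becomes $F_{P^*}[i,j] = -\mu(F_P[i,j])\cdot \tilde g_i^{-1}$. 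The identity $\alpha_{K^*}\circ\mu=-\alpha_K$ gives $\psi_{K^*,t^{-1}}\circ\mu = \mu\circ\psi_{K,t}$ on $\C[G_K]$, and $\psi_{K^*,t^{-1}}(\tilde g_i^{-1}) = t\,\tilde g_i^{-1}$, so after deleting the first row:
\[
\psi_{K^*,t^{-1}}(F_{P^*,1})[i,j] \;=\; -t\cdot \mu\!\bigl(\psi_{K,t}(F_{P,1})[i,j]\bigr)\cdot \tilde g_{i+1}^{-1}.
\]

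At the level of right-multiplication operators, the scalar factor $-t\,\tilde g_{i+1}^{-1}$ depending only on the row $i$ translates into a \emph{left} composition: writing $M = \mu(\psi_{K,t}(F_{P,1}))$ and $D = \mathrm{diag}\bigl(R_{-t\tilde g_{i+1}^{-1}}\bigr)_{i=1}^{k-1}$ on $\ell^2(G_{K^*})^{k-1}$, a direct computation of $(R_{\psi_{K^*,t^{-1}}(F_{P^*,1})}v)_i$ shows
\[
R_{\psi_{K^*,t^{-1}}(F_{P^*,1})} \;=\; D\circ R_M \;=\; D\circ \mu_*\!\bigl(R_{\psi_{K,t}(F_{P,1})}\bigr).
\]
Each $R_{\tilde g_{i+1}^{-1}}$ is unitary with Fuglede--Kadison determinant $1$, so $D$ is invertible and $\det_{\NN(G_{K^*})}(D) = t^{k-1}$ by Proposition \ref{prop operations det}(2),(3),(4). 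By Proposition \ref{prop i*}(1),(2),(3) and Proposition \ref{prop operations det}(6), $\mu_*\bigl(R_{\psi_{K,t}(F_{P,1})}\bigr)$ is injective with Fuglede--Kadison determinant $\Delta^{(2)}_{K,P}(t)$. Hence $R_{\psi_{K^*,t^{-1}}(F_{P^*,1})}$ is injective — so $(P^*,t^{-1})$ has Property $\mathcal I$ — and
\[
\det_{\NN(G_{K^*})}\!\bigl(R_{\psi_{K^*,t^{-1}}(F_{P^*,1})}\bigr) \;=\; t^{k-1}\cdot \Delta^{(2)}_{K,P}(t).
\]

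Finally, all generators of $P^*$ are meridians of $K^*$, so $|\alpha_{K^*}(g_1)|-1 = 0$ and the correction factor in Theorem \ref{thm pas wirtinger}(1) is trivial; therefore $\Delta^{(2)}_{K^*,P^*}(t^{-1}) = t^{k-1}\Delta^{(2)}_{K,P}(t)$. Since invariants are classes modulo multiplication by $t\mapsto t^{\Z}$, this yields the desired equality $\Delta^{(2)}_{K^*}(t^{-1}) = \Delta^{(2)}_K(t)$. The main subtlety is the sign convention: one must carefully track that $\mu$ reverses meridian orientations (so $\alpha_{K^*}\circ\mu = -\alpha_K$), which is precisely what converts $\psi_{K,t}$ into $\psi_{K^*,t^{-1}}$ under $\mu$ and makes the whole computation come out clean.
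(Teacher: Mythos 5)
Your proof is correct, and it rests on the same central mechanism as the paper's: the mirror homeomorphism induces an isomorphism $\mu\colon G_K \to G_{K^*}$ with $\alpha_{K^*}\circ\mu = -\alpha_K$, and this sign flip is exactly what intertwines $\psi_{K,t}$ with $\psi_{K^*,t^{-1}}$. Where you diverge is in the choice of $P^*$ and the resulting computation. The paper reflects the diagram and takes as generators the images $A_i = \mu(a_i)$ themselves, which are negatively-oriented meridians of $K^*$ (so $\alpha_{K^*}(A_i)=-1$); the relators are then literally the same words in the $A_i$ as the $r_j$ are in the $a_i$, so $F_{P^*}=\mu(F_P)$ on the nose, the commutative square gives $R_{\psi_{K^*,t^{-1}}(F_{P^*,1})} = \mu_*(R_{\psi_{K,t}(F_{P,1})})$ with no correction, and the normalization factor $\max(1,t)^{|\alpha_{K^*}(A_1)|-1}=1$ finishes the argument in one line. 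You instead precompose with the free involution $\sigma\colon g_i\mapsto g_i^{-1}$ so that the generators of your $P^*$ map to positively-oriented meridians; this costs you the Fox chain-rule identity $\partial\sigma(r)/\partial g_i = -\sigma(\partial r/\partial g_i)\,g_i^{-1}$ (which is correct) and a row-dependent right factor $-t\,\tilde g_{i+1}^{-1}$ that you correctly convert into a left-composed diagonal operator $D$ of Fuglede--Kadison determinant $t^{k-1}$, absorbed at the end into the $t^{\Z}$ ambiguity. Both routes are valid; the paper's is computationally lighter, while yours has the mild advantage that the generators of $P^*$ have abelianization $+1$, so the correction exponent in Theorem \ref{thm pas wirtinger} vanishes for the "expected" reason. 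The one point worth stating explicitly in your write-up is the justification that $\sigma$ descends to an isomorphism $Gr(P^*)\to Gr(P)$ (it does, since $\sigma$ is an involutive automorphism of the free group carrying the normal closure of $\{\sigma(r_j)\}$ onto that of $\{r_j\}$), and that Theorem \ref{thm pas wirtinger}(1) applies to $P^*$ even though it is not literally a Wirtinger presentation --- it only needs deficiency one, which you have.
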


\begin{proof}

Take a diagram $D$ of $K$ and its image $D'$ by a planar reflection by a line not intersecting $D$. Then $D'$ is a diagram for $K^*$.
Take a base point in $\R^3$ above the plane of the diagrams $D$ and $D'$.

Each crossing of $D$ corresponds to a crossing of $D'$ as in Figure \ref{figure mirror}.

\begin{figure}[!h] 
\centering
\begin{tikzpicture} [every path/.style={string } , every node/.style={transform shape , knot crossing , inner sep=1.5 pt } ] 

\begin{scope}[xshift=0cm,scale=1]
	\node[rotate=0] (h) at (0,2) {};
	\node[rotate=0] (g) at (-2,0) {} ;
	\node[rotate=0] (c) at (0,0) {};
	\node[rotate=0] (d) at (2,0) {};
	\node[rotate=0] (b) at (0,-2) {};

\draw (g.center) .. controls (g.16 east) and (c.16 west) .. (c) ;
\draw[->] (c) .. controls (c.16 east) and (d.16 west) .. (d.center) ;
\draw[->] (b.center) .. controls (b.16 north) and (h.16 south) .. (h) ;
\end{scope}

\begin{scope}[xshift=0cm,scale=1,color=blue]
	\node (ah) at (0,1) {};
		\node (ahd) at (0.7,1) {};
		\node (ahg) at (-0.7,1) {};
	
\draw (ahd.center) .. controls (ahd.16 west) and (ah.16 east) .. (ah) ;
\draw[->] (ah) .. controls (ah.16 west) and (ahg.16 east) .. (ahg.center) ;
\draw (ahg) node[above left]{$a$} ;

	\node (ab) at (0,-1) {};
		\node (abd) at (0.7,-1) {};
		\node (abg) at (-0.7,-1) {};
	
\draw (abd.center) .. controls (abd.16 west) and (ab.16 east) .. (ab) ;
\draw[->] (ab) .. controls (ab.16 west) and (abg.16 east) .. (abg.center) ;
\draw (abg) node[below left]{$a$} ;

	\node (bg) at (-1,0) {};
		\node (bgb) at (-1,-0.7) {};
		\node (bgh) at (-1,0.7) {};
	
\draw (bgb.center) .. controls (bgb.16 north) and (bg.16 south) .. (bg) ;
\draw[->] (bg) .. controls (bg.16 north) and (bgh.16 south) .. (bgh.center) ;
\draw (bgh) node[above left]{$b$} ;

	\node (cd) at (1,0) {};
		\node (cdb) at (1,-0.7) {};
		\node (cdh) at (1,0.7) {};
	
\draw (cdb.center) .. controls (cdb.16 north) and (cd.16 south) .. (cd) ;
\draw[->] (cd) .. controls (cd.16 north) and (cdh.16 south) .. (cdh.center) ;
\draw (cdh) node[above right]{$c$} ;

\end{scope}

\begin{scope}[xshift=6cm,scale=1]
	\node[rotate=0] (h) at (0,2) {};
	\node[rotate=0] (g) at (-2,0) {} ;
	\node[rotate=0] (c) at (0,0) {};
	\node[rotate=0] (d) at (2,0) {};
	\node[rotate=0] (b) at (0,-2) {};

\draw[<-] (g.center) .. controls (g.16 east) and (c.16 west) .. (c) ;
\draw (c) .. controls (c.16 east) and (d.16 west) .. (d.center) ;
\draw[->] (b.center) .. controls (b.16 north) and (h.16 south) .. (h) ;

\end{scope}

\begin{scope}[xshift=6cm,scale=1,color=blue]
	\node (ah) at (0,1) {};
		\node (ahd) at (0.7,1) {};
		\node (ahg) at (-0.7,1) {};
	
\draw (ahg.center) .. controls (ahg.16 east) and (ah.16 west) .. (ah) ;
\draw[->] (ah) .. controls (ah.16 east) and (ahd.16 west) .. (ahd.center) ;
\draw (ahd) node[above right]{$A$} ;

	\node (ab) at (0,-1) {};
		\node (abd) at (0.7,-1) {};
		\node (abg) at (-0.7,-1) {};
	
\draw (abg.center) .. controls (abg.16 east) and (ab.16 west) .. (ab) ;
\draw[->] (ab) .. controls (ab.16 east) and (abd.16 west) .. (abd.center) ;
\draw (abd) node[below right]{$A$} ;

	\node (bg) at (-1,0) {};
		\node (bgb) at (-1,-0.7) {};
		\node (bgh) at (-1,0.7) {};
	
\draw (bgb.center) .. controls (bgb.16 north) and (bg.16 south) .. (bg) ;
\draw[->] (bg) .. controls (bg.16 north) and (bgh.16 south) .. (bgh.center) ;
\draw (bgh) node[above left]{$C$} ;

	\node (cd) at (1,0) {};
		\node (cdb) at (1,-0.7) {};
		\node (cdh) at (1,0.7) {};
	
\draw (cdb.center) .. controls (cdb.16 north) and (cd.16 south) .. (cd) ;
\draw[->] (cd) .. controls (cd.16 north) and (cdh.16 south) .. (cdh.center) ;
\draw (cdh) node[above right]{$B$} ;

\end{scope}

\end{tikzpicture}
\caption{A crossing of $D$, its mirror image in $D'$, and the associated meridian loops} \label{figure mirror}
\end{figure}
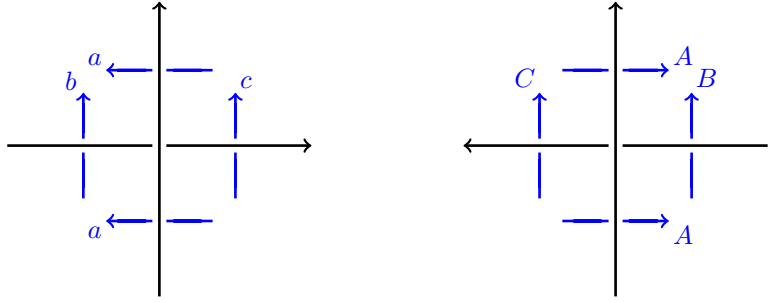

Let $P = \langle a_i | r_j \rangle$ be a Wirtinger presentation of $G_K = \pi_1(S^3 \setminus K)$ associated to $D$. Its relators are of the form $aba^{-1}c^{-1}$. As in Figure \ref{figure mirror}, for each generator $a_i$ of $P$, define $A_i$ a (negatively-oriented) meridian loop of $D'$, and for $r_j = aba^{-1}c^{-1}$, define $R_j = ABA^{-1}C^{-1}$. Then $P^* = \langle A_i | R_j \rangle$ is a presentation for $G_{K^*} = \pi_1(S^3 \setminus K^*)$. Note that $\alpha_{K^*}(A_i) = -1$ for all $i$.

Let $\phi : G_K \to G_{K^*}$ denote the natural group isomorphism sending $a_i$ to $A_i$ and its induction on the associated complex group algebras. Then 
$$\begin{matrix}
\C[G_K] & \overset{\psi_{K,t}}{\longrightarrow} & \C[G_K] \\
\downarrow \phi & & \downarrow \phi \\
\C[G_{K^*}] & \overset{\psi_{K^*,t^{-1}}}{\longrightarrow} & \C[G_{K^*}]
\end{matrix}
$$
is a commutative diagram, since $\psi_{K^*,t^{-1}}(A_i) = t A_i$ for all $i$.

Suppose $(P,t)$ has Property $\mathcal{I}$, thus $R_{\psi_{K,t}(F_{P,1})}$ is injective. Therefore, by Proposition \ref{prop i*} (1), Lemma \ref{lem psi i*} and Proposition \ref{prop i*} (2), in this order,
$$(\phi)_*( R_{\psi_{K,t}(F_{P,1})} )
= R_{{\phi}(\psi_{K,t}(F_{P,1}))}
= R_{\psi_{K^*,t^{-1}}({\phi}(F_{P,1}))}
= R_{\psi_{K^*,t^{-1}}(F_{P^*,1})}$$
is injective. Thus $(P^*, t^{-1})$ has Property $\mathcal{I}$.

By Theorem \ref{thm pas wirtinger}, since $P^*$ has deficiency one, 
$$\Delta_{K^*}^{(2)}(t^{-1}) = 
\dfrac{\mathrm{det}_{\mathcal{N}(G_{K^*})}(R_{\psi_{K^*,t^{-1}}(F_{P^*,1})}) }
{ \max(1,t)^{| \alpha_{K^*} (A_1) | -1}  } 
= 
\mathrm{det}_{\mathcal{N}(G_{K^*})}\left ((\phi)_*( R_{\psi_{K,t}(F_{P,1})} )\right ),
$$
and by Proposition \ref{prop operations det} (6) we conclude that
$\Delta_{K^*}^{(2)}(t^{-1}) = \Delta_K^{(2)}(t)$.
\end{proof}

\section{The $L^2$-Alexander invariant of a composite knot} \label{section composite}

Let $K_1$ and $K_2$ be knots in $S^3$ and $K$ their connected sum. We prove that the $L^2$-Alexander invariant of $K$ can be calculated from those of its factors. This multiplicativity of the invariant can be compared to the classical property of the Alexander polynomial of a composite knot, cf for example \cite[Proposition 8.14]{BZ}.

\begin{lem} \label{lem psi i*}
Let $K$ be the connected sum of $K_1$ and $K_2$, with $G, G_1$ and $G_2$ their respective groups.

Then for $j=1,2$ and for all $t>0$ we have the commutative diagram
$$\begin{matrix}
\C[G_j] & \overset{\psi_{K_j,t}}{\longrightarrow} & \C[G_j] \\
\downarrow i_j & & \downarrow i_j \\
\C[G] & \overset{\psi_{K,t}}{\longrightarrow} & \C[G]
\end{matrix}
$$
where $i_j\colon G_j \hookrightarrow G$ denotes both the group inclusion of Proposition \ref{prop somme inj} and its induction on the complex group algebras.
\end{lem}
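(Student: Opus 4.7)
The plan is to reduce the claim to a statement about abelianization homomorphisms. Unwinding the definition of $\psi_{K,t}$, since both $\psi_{K,t}$ and $i_j$ are $\C$-algebra homomorphisms, the diagram commutes if and only if it commutes on group elements $g \in G_j$. For such $g$, the comparison reads
\[
i_j(\psi_{K_j,t}(g)) = t^{\alpha_{K_j}(g)} i_j(g), \qquad
\psi_{K,t}(i_j(g)) = t^{\alpha_K(i_j(g))} i_j(g),
\]
so the lemma is equivalent to the identity $\alpha_K \circ i_j = \alpha_{K_j}$ of homomorphisms $G_j \to \Z$.

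To verify this identity, I will use Proposition \ref{prop Amalgame}. Choose Wirtinger presentations $P_1 = \langle x_1,\ldots,x_k \mid r_1,\ldots,r_{k-1}\rangle$ of $G_1$ and $P_2 = \langle y_1,\ldots,y_l \mid s_1,\ldots,s_{l-1}\rangle$ of $G_2$ as in Proposition \ref{prop Amalgame}, so that
\[
P = \langle x_1,\ldots,x_k,y_1,\ldots,y_l \mid r_1,\ldots,r_{k-1},s_1,\ldots,s_{l-1}, x_k y_l^{-1}\rangle
\]
is a Wirtinger presentation of $G$. Under the inclusion $i_1$ (resp.\ $i_2$), each generator $x_i$ (resp.\ $y_i$) of $G_j$ is mapped to the corresponding generator of $G$ with the same name. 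Since every generator of a Wirtinger presentation is a meridian loop and is therefore sent to $1$ by the associated abelianization, we get $\alpha_{K_1}(x_i) = 1 = \alpha_K(x_i) = \alpha_K(i_1(x_i))$ for each $i$, and similarly for the $y_i$'s. Two homomorphisms $G_j \to \Z$ that agree on a generating set are equal, so $\alpha_K \circ i_j = \alpha_{K_j}$.

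From $\alpha_K \circ i_j = \alpha_{K_j}$ the commutativity of the diagram follows at once: for every $g \in G_j$,
\[
i_j(\psi_{K_j,t}(g)) = t^{\alpha_{K_j}(g)} i_j(g) = t^{\alpha_K(i_j(g))} i_j(g) = \psi_{K,t}(i_j(g)),
\]
and then extending $\C$-linearly gives the equality on all of $\C[G_j]$. The only delicate point is the identification of meridians of $K_j$ with meridians of $K$ via $i_j$; this is a standard consequence of the Seifert–van Kampen decomposition pictured in Figure \ref{figure vankampen partition} and is already built into Proposition \ref{prop Amalgame}, so there is no real obstacle.
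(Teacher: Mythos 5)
Your proposal is correct and follows essentially the same route as the paper: take the Wirtinger presentations from Proposition \ref{prop Amalgame}, observe that all generators are meridian loops and hence are sent to $1$ by both $\alpha_{K_j}$ and $\alpha_K$, and conclude by multiplicativity and linearity since these generators generate the group algebras. Your explicit reformulation of the key point as the identity $\alpha_K \circ i_j = \alpha_{K_j}$ is a slightly cleaner packaging of the same argument.
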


\begin{proof}
Let us take $P_1$, $P_2$ and $P$ like in Proposition \ref{prop Amalgame}, and $ t > 0$. We have 
$$P_1 = \langle x_1 , \ldots , x_k | r_1 , \ldots, r_{k-1}   \rangle,$$
$$P_2 = \langle y_1 , \ldots , y_l | s_1 , \ldots, s_{l-1}   \rangle,$$ 
$$P = \langle x_1 , \ldots , x_k, y_1 , \ldots , y_l | r_1 , \ldots, r_{k-1}, s_1, \ldots, s_{l-1}, x_k y_l^{-1}   \rangle.$$

These three presentations are Wirtinger, therefore the $x_i$ are sent to $1$ by $\alpha_{K_1}$ as elements of $G_1$ and by $\alpha_K$ as elements of $G$, and the same can be said for the generators $y_j$.

Therefore the diagram is commutative for any $[g] \in \C[G_j]$ where $g$ is a generator of $P_1$ or $P_2$. The result follows from the fact that the $\psi_{.,t}$ and $i_j$ are algebra homomorphisms and that the previous $[g]$ generate the two group algebras.
\end{proof}

\begin{thm} \label{L2 somme}
Let $K$ be the connected sum of $K_1$ and $K_2$, with $G, G_1$ and $G_2$ their respective groups, and $P, P_1, P_2$ the presentations given by Proposition \ref{prop Amalgame}.

Let $t$ be any positive number. If we assume that $(P_1,t)$ and $(P_2,t)$ have Property $\mathcal{I}$,
then $(P,t)$ has Property $\mathcal{I}$ and
$\Delta^{(2)}_K(t) = \Delta^{(2)}_{K_1}(t) \Delta^{(2)}_{K_2}(t)$.

\end{thm}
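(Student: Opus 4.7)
The plan is to compute the Fox matrix $F_P$ of the amalgamated Wirtinger presentation, delete a cleverly chosen row so that the resulting square matrix is block diagonal, and reduce its Fuglede--Kadison determinant to those of $F_{P_1}$ and $F_{P_2}$ via the induction functors $(i_1)_*$ and $(i_2)_*$ along the inclusions of Proposition \ref{prop somme inj}.

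First I would compute $F_P$ by the Fox rules. Because the relators decouple --- the $r_i$ involve only the $x$-generators, the $s_i$ only the $y$-generators, and the last relator only $x_k$ and $y_l$ (with $\overline{x_k y_l^{-1}}=1$ in $G$) --- the entries in the $r$-columns form $i_1(F_{P_1})$, those in the $s$-columns form $i_2(F_{P_2})$, and the last column has $1$ in the $x_k$-row, $-1$ in the $y_l$-row, and zeros elsewhere. By Remark \ref{rem switch generators} I am free to delete any row when computing $\Delta^{(2)}_{K,P}(t)$; deleting the row for $y_l$ eliminates the $-1$ entry, and a permutation of columns (the associated operator is a unitary, hence of Fuglede--Kadison determinant $1$) brings the matrix into block diagonal form
\[
M \;=\; \begin{pmatrix} A & 0 \\ 0 & i_2(F_{P_2,l}) \end{pmatrix},
\]
where $A$ is the $k\times k$ matrix formed by appending the column $e_k$ to $i_1(F_{P_1})$, and $i_2(F_{P_2,l})$ has size $(l-1)\times(l-1)$.

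Then Proposition \ref{prop operations det} (3) factors $\det_{\mathcal{N}(G)}(R_{\psi_{K,t}(M)})$ as a product over the two diagonal blocks. For the lower-right block, Lemma \ref{lem psi i*} yields $\psi_{K,t}\circ i_2 = i_2\circ \psi_{K_2,t}$, Proposition \ref{prop i*} (1)--(2) identifies the corresponding operator with $(i_2)_*R_{\psi_{K_2,t}(F_{P_2,l})}$ and transfers injectivity from Property $\mathcal{I}$ for $(P_2,t)$, and Proposition \ref{prop operations det} (6) produces the value $\Delta^{(2)}_{K_2,P_2}(t)$. For the upper-left block, the structure is already block lower-triangular:
\[
A = \begin{pmatrix} i_1(F_{P_1,k}) & 0 \\ \ast & 1 \end{pmatrix},
\]
so Proposition \ref{prop operations det} (5) (after the standard block swap to upper triangular form, whose implementing permutation is again unitary) combined with the same induction argument along $i_1$ returns the factor $\Delta^{(2)}_{K_1,P_1}(t)$. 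Injectivity of each block transfers to injectivity of the full operator, giving Property $\mathcal{I}$ for $(P,t)$; passing to classes modulo $(t\mapsto t^{\Z})$ yields $\Delta^{(2)}_K(t) = \Delta^{(2)}_{K_1}(t)\cdot \Delta^{(2)}_{K_2}(t)$.

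The only real difficulty is bookkeeping: deleting the $y_l$-row rather than the $x_1$-row (the latter would not isolate the two factors), tracking two nested block-triangular reductions so that only the Fox submatrices $F_{P_1,k}$ and $F_{P_2,l}$ survive, and invoking Remark \ref{rem switch generators} to identify these with the Property $\mathcal{I}$ hypothesis, which is stated for $F_{P_j,1}$. Lemma \ref{lem psi i*} is the crucial algebraic ingredient that permits pulling $\psi_{K,t}$ through the inclusions $i_j$ and thereby makes the induction argument available.
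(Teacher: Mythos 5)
Your proposal is correct and follows essentially the same route as the paper: the only difference is that the paper deletes the $x_k$-row of $F_P$, which makes the resulting square matrix block lower-triangular in one pass with diagonal blocks $i_1(F_{P_1,k})$, $i_2(F_{P_2,l})$ and $-Id$, whereas your deletion of the $y_l$-row requires one extra nested triangular reduction inside the block $A$. All the substantive ingredients (Lemma \ref{lem psi i*}, Proposition \ref{prop i*}, Proposition \ref{prop operations det} (5)--(6), Remark \ref{rem switch generators}) are used identically.
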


\begin{proof}
Let $P_1$, $P_2$ and $P$ be like in Proposition \ref{prop Amalgame}, and $ t > 0$. 
We have two injective group homomorphisms $i_1 \colon G_1 \hookrightarrow G$ and $i_2 \colon G_2 \hookrightarrow G$ by Proposition \ref{prop somme inj}.

The values of $P, P_1, P_2$ imply that $R_{\psi_{K,t}(F_P)}$ is written:

$\kbordermatrix{
 & r_1 & \ldots & r_{k-1} & \omit\vrule height 1ex & s_1 & \ldots & s_{l-1} & \omit\vrule height 1ex & x_k y_l^{-1} \\
x_1 & & & & \omit\vrule & 0 & \ldots & 0 & \omit\vrule & 0 \\
\vdots & & R_{\psi_{K,t}({i_1}(F_{P_1,k}))} & & \omit\vrule & \vdots & & \vdots & \omit\vrule & \vdots \\
x_{k-1} & & & & \omit\vrule & 0 & \ldots & 0 & \omit\vrule & 0 \\
\cline{2-10}
x_k & & * & & \omit\vrule & 0 & \ldots & 0 & \omit\vrule & Id \\
\cline{2-10}
y_1 &0 &\ldots &0 & \omit\vrule & &  & & \omit\vrule & 0 \\
\vdots & \vdots & & \vdots & \omit\vrule &  &  R_{\psi_{K,t}({i_2}(F_{P_2,l}))} &  & \omit\vrule & \vdots \\
y_{l-1} & 0 & \ldots & 0 & \omit\vrule & & & & \omit\vrule & 0 \\
\cline{2-10}
y_l & 0 & \ldots & 0 & \omit\vrule &  &  * &  & \omit\vrule & -Id 
}$

$(P_1,t)$ has Property $\mathcal{I}$ thus $R_{\psi_{K_1,t}(F_{P_1,k})}$ is injective (by Remark \ref{rem switch generators}). Therefore, by Proposition \ref{prop i*} (1), Lemma \ref{lem psi i*} and Proposition \ref{prop i*} (2), in this order,
$$(i_1)_*( R_{\psi_{K_1,t}(F_{P_1,k})} )
= R_{{i_1}(\psi_{K_1,t}(F_{P_1,k}))}
= R_{\psi_{K,t}({i_1}(F_{P_1,k}))}$$
is injective.
Similarly, $R_{\psi_{K,t}({i_2}(F_{P_2,l}))}$
is injective. 
Finally, $ - Id_{\ell^2(G)}$ is clearly injective.

Therefore the block trigonal matrix $R_{\psi_{K,t}(F_{P,k})}$ is injective, thus, by Remark \ref{rem switch generators}, $(P,t)$ has Property $\mathcal{I}$.

Hence by Proposition \ref{prop operations det} (5) and (2),
$$
det_{\mathcal{N}(G)}\left (R_{\psi_{K,t}(F_{P,k})}\right ) =
det_{\mathcal{N}(G)}\left (R_{\psi_{K,t}({i_1}(F_{P_1,k}))}\right ) \cdot 
det_{\mathcal{N}(G)}\left (R_{\psi_{K,t}({i_2}(F_{P_2,l}))}\right ).
$$

Finally, 
$$det_{\mathcal{N}(G)}\left (R_{\psi_{K,t}({i_1}(F_{P_1,k}))}\right ) =
det_{\mathcal{N}(G)}\left ( (i_1)_*( R_{\psi_{K_1,t}(F_{P_1,k})} ) \right )
= det_{\mathcal{N}(G_1)}\left (  R_{\psi_{K_1,t}(F_{P_1,k})}  \right )
$$ by Lemma \ref{lem psi i*} and Proposition \ref{prop operations det} (6). We use a similar argument for the second term, and thus 
$$\Delta^{(2)}_K(t) = \Delta^{(2)}_{K_1}(t) \Delta^{(2)}_{K_2}(t).$$

\end{proof}

\section{The $L^2$-Alexander invariant of a cable knot} \label{section cabling}

\begin{lem} \label{lem sat psi i*}
Let $S$ be the $(p,q)$-cable of $C$, and let $G_S, G_C$ be their respective groups.
Then for all $t>0$ we have the commutative diagram
$$\begin{matrix}
\C[G_C] & \overset{\psi_{C,t^p}}{\longrightarrow} & \C[G_C] \\
\downarrow i_C & & \downarrow i_C \\
\C[G_S] & \overset{\psi_{S,t}}{\longrightarrow} & \C[G_S]
\end{matrix}
$$
where $i_C \colon G_C \hookrightarrow G_S$ denotes both the group inclusion of Proposition \ref{prop sat inj} and its induction on the complex group algebras.
\end{lem}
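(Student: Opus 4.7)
The plan is to verify commutativity on a generating set of the algebra $\C[G_C]$ and then extend by multiplicativity. Since both $\psi_{C,t^p}$ and $\psi_{S,t}$ are algebra homomorphisms by definition, and $i_C$ is a group homomorphism inducing an algebra homomorphism on complex group algebras, it suffices to check the diagram commutes on elements of the form $[g]$ where $g$ ranges over a generating family of $G_C$.

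First I would fix the Wirtinger presentation $P_C = \langle a_1, \ldots, a_k \mid r_1, \ldots, r_{k-1} \rangle$ of $G_C$ provided by Proposition \ref{prop groupe cable}. Since $P_C$ is Wirtinger, each $a_i$ is a meridian loop in $G_C$, so $\alpha_C(a_i) = 1$ for every $i$. On the other hand, part (2) of Proposition \ref{prop groupe cable} tells us exactly that $\alpha_S(i_C(a_i)) = p$ for every $i$. These two facts are the heart of the matter.

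With these values in hand, I would compute both sides on a generator: on one hand,
\[
\psi_{S,t}(i_C(a_i)) = t^{\alpha_S(i_C(a_i))} \cdot i_C(a_i) = t^p \cdot i_C(a_i),
\]
while on the other hand,
\[
i_C(\psi_{C,t^p}(a_i)) = i_C\bigl((t^p)^{\alpha_C(a_i)} \cdot a_i\bigr) = t^p \cdot i_C(a_i).
\]
So the diagram commutes on each $[a_i]$, and hence on each $[a_i^{-1}]$ by the algebra homomorphism property applied to $[a_i]\cdot[a_i^{-1}] = [e]$.

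Finally, since the $a_i$ generate $G_C$ as a group, the elements $[a_i^{\pm 1}]$ generate $\C[G_C]$ as a $\C$-algebra, and all four maps in the diagram are $\C$-algebra homomorphisms. Therefore the commutativity on generators propagates to the whole algebra. I do not expect any real obstacle here; the only subtle point is making sure to invoke a Wirtinger presentation of $G_C$ (so that $\alpha_C(a_i) = 1$) together with the explicit computation of $\alpha_S$ on the $a_i$ coming from Proposition \ref{prop groupe cable}, which is precisely what aligns the exponent $p$ on both sides.
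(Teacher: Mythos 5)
Your proposal is correct and follows essentially the same route as the paper: fix the Wirtinger presentation $P_C$ from Proposition \ref{prop groupe cable}, use $\alpha_C(a_i)=1$ and $\alpha_S(a_i)=p$ to check commutativity on the generators $[a_i]$, and extend to all of $\C[G_C]$ because all maps involved are algebra homomorphisms and the $[a_i^{\pm 1}]$ generate the group algebra. The only cosmetic difference is that you explicitly separate which fact comes from the Wirtinger property and which from part (2) of Proposition \ref{prop groupe cable}, which is a slight clarification of the paper's wording.
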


\begin{proof}

Let us take $P_C = \langle a_1 , \ldots , a_k | r_1 , \ldots, r_{k-1} \rangle $  and
$$P_S = \langle a_1 , \ldots , a_k, x, \lambda | r_1 , \ldots, r_{k-1}, x^p  a_k^{-q} \lambda^{-p} ,
\lambda^{-1} W(a_i) 
   \rangle$$
 like in Proposition \ref{prop groupe cable}. Let $ t > 0$. 

Proposition \ref{prop groupe cable} (2) tells us that every $a_i$ is sent to $1$ by $\alpha_C$ as an element of $G_C$ and is sent to $p$ by $\alpha_S$ as an element of $G_S$.

Therefore the diagram is commutative for any $[a_i] \in \C[G_C]$ where $a_i$ is a generator of $P_C$. The lemma follows from the fact that $\psi_{C,t^p}$, $\psi_{S,t}$ and $i_C$ are algebra homomorphisms and that the $[a_i]$ generate $\C[G_C]$.
\end{proof}

\begin{lem}  \label{lem 1+t+t2}
Let $G$ be a discrete countable group, let $g \in G$ of infinite order, let $p$ be a positive integer and let $t>0$. 
Then $Id + t R_g + \ldots + t^{(p-1)}  R_{g^{p-1}}$ is injective and
$$det_{\mathcal{N}(G)}\left (Id + t R_g + \ldots + t^{(p-1)}  R_{g^{p-1}}\right ) = \max ( 1,t)^{p-1}.$$
\end{lem}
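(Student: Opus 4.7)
The plan is to reduce to the already-known scalar case by factoring the polynomial $1+X+\cdots+X^{p-1}$ over $\C$. Let $\omega=e^{2\pi i/p}$. Since $1+X+\cdots+X^{p-1}=(X^p-1)/(X-1)$, its roots are exactly the non-trivial $p$-th roots of unity, so
\[
1+X+\cdots+X^{p-1}=\prod_{k=1}^{p-1}(X-\omega^k).
\]
Evaluating at $X=0$ gives $\prod_{k=1}^{p-1}(-\omega^k)=1$, so this rewrites as $\prod_{k=1}^{p-1}(1-\omega^{-k}X)$.

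I would then substitute $X=tR_g$. Since the right-multiplications $R_{g^i}$ pairwise commute, the polynomial identity above transfers directly to the operators, giving
\[
\mathrm{Id}+tR_g+\cdots+t^{p-1}R_{g^{p-1}}=\prod_{k=1}^{p-1}\bigl(\mathrm{Id}-\omega^{-k}tR_g\bigr),
\]
where the right-hand side is a well-defined composition of commuting operators in $\mathcal{N}(G)$.

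Now I would invoke Proposition \ref{prop id - tg} (which is stated for \emph{any} $t\in\C$) applied to the complex parameter $\omega^{-k}t$: since $g$ has infinite order, each factor $\mathrm{Id}-\omega^{-k}tR_g$ is injective and has Fuglede--Kadison determinant $\max(1,|\omega^{-k}t|)=\max(1,t)$. The composition of $p-1$ injective maps is injective, which establishes the injectivity claim. Finally, applying Proposition \ref{prop operations det} (4) to multiply the determinants yields
\[
\det_{\mathcal{N}(G)}\Bigl(\mathrm{Id}+tR_g+\cdots+t^{p-1}R_{g^{p-1}}\Bigr)=\prod_{k=1}^{p-1}\max(1,t)=\max(1,t)^{p-1}.
\]

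There is no real obstacle here: the only point requiring a moment of care is that the factors $\mathrm{Id}-\omega^{-k}tR_g$ have complex (not positive) coefficients, so I rely on the fact that Proposition \ref{prop id - tg} is valid for any $t\in\C$, and that the Fuglede--Kadison determinant depends only on the modulus $|\omega^{-k}t|=t$.
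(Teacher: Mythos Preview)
Your proof is correct. Both your argument and the paper's rely on Proposition~\ref{prop id - tg} and the multiplicativity of the Fuglede--Kadison determinant for injective compositions, but the factorizations are different. The paper uses the ``geometric series'' identity $(Id - tR_g)\circ R = Id - t^p R_{g^p}$: since the right-hand side is injective (applying Proposition~\ref{prop id - tg} to $g^p$, which also has infinite order), so is $R$; then one divides determinants to get $\max(1,t^p)/\max(1,t)=\max(1,t)^{p-1}$. You instead split $R$ completely into $p-1$ linear factors over $\C$ and multiply. Your route is slightly more direct (no quotient), but it genuinely needs the complex case of Proposition~\ref{prop id - tg}, whereas the paper only needs it for real positive parameters; conversely, the paper's version needs the (easy) observation that $g^p$ has infinite order, which you avoid.
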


\begin{proof}
Let us note $R = Id + t R_g + \ldots + t^{(p-1)}  R_{g^{p-1}}$.

We have $ (Id - tR_g) \circ R = Id - t^p R_{g^p}$. By Proposition \ref{prop id - tg},  $Id - t^p R_{g^p}$ is injective, therefore $R$ is injective.

Both $Id -t R_g$ and $R$ are injective, therefore, by Proposition \ref{prop operations det} (4),
$$det_{\mathcal{N}(G)}\left (Id - t^p R_{g^p}\right )
=det_{\mathcal{N}(G)}\left (Id -t R_g\right ) \cdot   det_{\mathcal{N}(G)}\left (R\right ).$$

Thus, by Proposition \ref{prop id - tg}, 
$ \max(1,t^p) = \max(1,t) \cdot  \det_{\mathcal{N}(G)}\left (R\right )$ and the lemma follows.

\end{proof}

\begin{thm} \label{L2 cable}
Let $S$ be the $(p,q)$-cable knot of companion knot $C$, $G_S, G_C$ their respective groups, and $t$ any positive real number. 

If there exists $P_w$ a Wirtinger presentation of $G_C$ such that $(P_w, t^p)$ has Property $\mathcal{I}$,
then there is a presentation $P_S$ of $G_S$ such that $(P_S,t)$ has Property $\mathcal{I}$, and
$$\Delta_S^{(2)}(t) = \Delta_C^{(2)}(t^p)  \cdot \max(1,t)^{(|p|-1) (|q|-1)}  = \Delta_C^{(2)}(t^p) \Delta_{T(p,q)}^{(2)}(t).$$
\end{thm}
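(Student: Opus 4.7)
The plan is to apply Theorem \ref{thm pas wirtinger} (1) to the deficiency-one presentation $P_S$ of $G_S$ given by Proposition \ref{prop groupe cable}. First I would compute the Fox derivatives of the two new relators $r_a = x^p a_k^{-q} \lambda^{-p}$ and $r_b = \lambda^{-1} W(a_i)$: the crucial observations are that $\partial r_j/\partial x = \partial r_j/\partial \lambda = 0$ for $j \leqslant k-1$, that $\partial r_a/\partial a_i = 0$ for $i < k$, and that $\partial r_b/\partial x = 0$ while $\partial r_b/\partial \lambda = -\lambda^{-1}$. Thus $F_{P_C}$ sits in the top-left $k \times (k-1)$ corner of $F_{P_S}$, the $r_a$-column contains $1 + x + \cdots + x^{p-1}$ in the $x$-row, and the $x$- and $\lambda$-rows vanish on all of the first $k-1$ columns.

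I would then delete the $a_1$-row, apply $\psi_{S,t}$, and swap the $r_a$ and $r_b$ columns (a unitary permutation of Fuglede--Kadison determinant $1$). Because the $x$- and $\lambda$-rows were zero on the first $k-1$ columns, the resulting operator is block upper triangular with a $(k-1) \times (k-1)$ top-left block and a $2 \times 2$ bottom-right block
\[
N = \begin{pmatrix} 0 & R_{u_2} \\ R_v & R_{u_3} \end{pmatrix},
\]
where $u_2 = 1 + t^q x + \cdots + (t^q x)^{p-1}$, $v = -\lambda^{-1}$, and $u_3 = -x^p a_k^{-q}(\lambda^{-1} + \cdots + \lambda^{-p})$. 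By Lemma \ref{lem sat psi i*} and Proposition \ref{prop i*} (1), the top-left block equals $A := (i_C)_* R_{\psi_{C, t^p}(F_{P_C, 1})}$.

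For the determinant of $A$: the hypothesis on $P_w$, combined with Proposition \ref{prop inv HT L2 alex}, transfers Property $\mathcal{I}$ to $(P_C, t^p)$, so Proposition \ref{prop i*} (2) ensures $A$ is injective and Proposition \ref{prop operations det} (6) gives $\det_{\mathcal{N}(G_S)}(A) = \Delta_C^{(2)}(t^p)$. For $N$, a single row swap yields a triangular form; $R_v$ is unitary up to sign with determinant $1$, and since $\alpha_S(x) = q \neq 0$ makes $x$ of infinite order in $G_S$, Lemma \ref{lem 1+t+t2} provides injectivity of $R_{u_2}$ together with $\det(R_{u_2}) = \max(1, t^q)^{|p|-1}$. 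Hence $(P_S, t)$ has Property $\mathcal{I}$ and, by Proposition \ref{prop operations det} (5),
\[
{\det}_{\mathcal{N}(G_S)}\!\left(R_{\psi_{S,t}(F_{P_S, 1})}\right) = \Delta_C^{(2)}(t^p) \cdot \max(1, t^q)^{|p|-1}.
\]

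Finally, Theorem \ref{thm pas wirtinger} (1) with $\alpha_S(a_1) = p$ requires dividing by $\max(1,t)^{|p|-1}$; a direct calculation gives
\[
\frac{\max(1, t^q)^{|p|-1}}{\max(1, t)^{|p|-1}} = t^{m} \cdot \max(1, t)^{(|p|-1)(|q|-1)}
\]
for some integer $m$ depending on the sign of $q$, and the equivalence modulo $|t|^{\mathbb{Z}}$ built into the invariant $\Delta_S^{(2)}$ produces the stated formula. The second equality is immediate from Theorem \ref{thm pas wirtinger} (2). The main obstacle I expect is the careful bookkeeping of signs in the Fox derivatives when $p$ or $q$ is negative: the series $\partial r_a/\partial x$ then changes form, and must be factored through an invertible monomial whose determinant contributes to the $|t|^{\mathbb{Z}}$ ambiguity.
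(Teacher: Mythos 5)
Your proposal is correct and follows essentially the same route as the paper: Proposition \ref{prop groupe cable} for the presentation, Lemma \ref{lem sat psi i*} and Proposition \ref{prop i*} to identify the companion block, Lemma \ref{lem 1+t+t2} for the $1+t^qx+\cdots$ factor, Proposition \ref{prop operations det} (5)--(6) for the block-triangular determinant, and the normalization of Theorem \ref{thm pas wirtinger}. The only (harmless) difference is that you delete the $a_1$-row and handle a $2\times 2$ anti-triangular corner block, whereas the paper deletes the $a_k$-row so that the $T$ and $Id$ blocks appear directly on the diagonal.
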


\begin{proof}

Let $P_C = \langle a_1 , \ldots , a_k | r_1 , \ldots, r_{k-1} \rangle $  and
$$P_S = \langle a_1 , \ldots , a_k, x, \lambda | r_1 , \ldots, r_{k-1}, x^p  a_k^{-q} \lambda^{-p},
\lambda^{-1} W(a_i) 
   \rangle$$
 be like in Proposition \ref{prop groupe cable}. 

Remark that $P_C$ is a Wirtinger presentation of $G_C$, as is $P_w$, therefore $(P_C,t^p)$ also has Property $\mathcal{I}$, by Proposition \ref{prop inv HT L2 alex}.

 Besides, $P_S$ is a presentation of deficiency one, thus by Theorem \ref{thm pas wirtinger}, $\Delta_S^{(2)}(u)$ will be equal to $\Delta_{S,P_S}^{(2)}(u)$ for any $u>0$ such that $(P_S,u)$ has Property $\mathcal{I}$.

Recall from Proposition \ref{prop groupe cable} (2) that $\alpha_{S}(a_i) = p, \alpha_{S} (x) = q$ and $\alpha_{S} (\lambda) = 0$.

The values of $P_S$ and $P_C$ imply that $R_{\psi_{S,t}(F_{P_S})}$ is written:

$\kbordermatrix{
 & r_1 & \ldots & r_{k-1} & \omit\vrule height 1ex & x^p a_k^{-q} \lambda ^{-p} & \omit\vrule height 1ex & \lambda W(a_i)^{-1} \\
a_1 & & & & \omit\vrule & 0 & \omit\vrule & * \\
\vdots & & R_{\psi_{S,t}({i_C}(F_{P_C,k}))} & & \omit\vrule & \vdots & \omit\vrule & \vdots \\
a_{k-1} & & & & \omit\vrule & 0 & \omit\vrule & * \\
\cline{2-8}
a_k & * & \ldots & * & \omit\vrule & *  & \omit\vrule & * \\
\cline{2-8}
x & 0 & \ldots & 0 & \omit\vrule & T  & \omit\vrule & 0 \\
\cline{2-8}
\lambda & 0 & \ldots & 0 & \omit\vrule &  * & \omit\vrule & Id 
}$

where $T = Id + t^q R_x + \ldots + t^{q(p-1)} R_{x^{p-1}}$ if $p$ is positive, and 
$$T = -t^{-q}R_{x^{-1}} - \ldots - t^{-q|p|}R_{x^p}
= \left (-t^{-q|p|}R_{x^p}\right ) \circ ( Id + t^q R_x + \ldots + t^{q(|p|-1)} R_{x^{|p|-1}})$$
 if $p$ is negative. In both cases $T$ is injective, by Lemma \ref{lem 1+t+t2} and the fact that $\left (-t^{-q|p|}R_{x^p}\right ) $ is invertible.

We know $(P_C,t^p)$ has Property $\mathcal{I}$, thus $R_{\psi_{C,t^p}(F_{P_C,k})}$ is injective, by Remark \ref{rem switch generators}.
We have the injective group homomorphism $i_C \colon G_C \hookrightarrow G_S$ by Proposition \ref{prop sat inj}.
Therefore, by Proposition \ref{prop i*} (1), Lemma \ref{lem sat psi i*} and Proposition \ref{prop i*} (2), in this order,
$$(i_C)_*( R_{\psi_{C,t^p}(F_{P_C,k})} )
= R_{{i_C}(\psi_{C,t^p}(F_{P_C,k}))}
= R_{\psi_{S,t}({i_C}(F_{P_C,k}))}$$
is injective.

Finally $Id_{\ell^2(G)}$ is clearly injective.

Thus the block trigonal square matrix $R_{\psi_{S,t}(F_{P_S,k})}$ is injective, hence,by Remark \ref{rem switch generators}, $(P_S,t)$ has Property $\mathcal{I}$. 
Therefore, by Proposition \ref{prop operations det} (5) and (2),
$$
det_{\mathcal{N}(G_S)}\left (R_{\psi_{S,t}(F_{P_S,k})}\right ) =
det_{\mathcal{N}(G_S)}\left (R_{\psi_{S,t}({i_C}(F_{P_C,k}))}\right ) \cdot 
det_{\mathcal{N}(G_S)}\left (T\right ).
$$

However we have 
$$det_{\mathcal{N}(G_S)}\left (R_{\psi_{S,t}({i_C}(F_{P_C,k}))}\right ) =
det_{\mathcal{N}(G_S)}\left ( (i_C)_*( R_{\psi_{C,t^p}(F_{P_C,k})} ) \right )
= det_{\mathcal{N}(G_C)}\left (  R_{\psi_{C,t^p}(F_{P_C,k})}  \right )
$$ by Lemma \ref{lem sat psi i*} and Proposition \ref{prop operations det} (6).

Besides, from Lemma \ref{lem 1+t+t2}, we have
$$det_{\mathcal{N}(G_S)}\left (Id + t^q R_x + \ldots + t^{q(|p|-1)}R_{x^{|p|-1}}\right ) = \max(1,t^q)^{|p|-1},$$
therefore, by the fact that $det_{\mathcal{N}(G_S)}\left (-t^{-q|p|}R_{x^p}\right ) \in t^\Z$ and Proposition \ref{prop operations det} (4), $det_{\mathcal{N}(G_S)}(T)$ is equal to $\max(1,t^q)^{|p|-1}$ up to $t^\Z$.

Note that for $t>0$ and any integer $k$, $\max(1,t^k) = t^{\frac{k-|k|}{2}} \max(1,t)^{|k|}$, therefore $\max(1,t^q)^{|p|-1} = \max(1,t)^{|q|(|p|-1)}$ up to $t^\Z$.

Finally, Theorem \ref{thm pas wirtinger}  tells us that $$
\Delta^{(2)}_{S}(t)=
 \dfrac{det_{\mathcal{N}(G_S)}(R_{\psi_{S,t}(F_{P_S,k})}) }
{ \max(1,t)^{|\alpha_{S} (a_k) | -1}  } 
= \dfrac{ det_{\mathcal{N}(G_C)}\left (  R_{\psi_{C,t^p}(F_{P_C,k})}  \right )  \cdot \max(1,t)^{|q|(|p|-1)} }
{ \max(1,t)^{|p| -1}  }.$$

Thus we have proven the formula
$$\Delta^{(2)}_{S}(t)= \Delta_C^{(2)}(t^p) \cdot  \max(1,t)^{(|p|-1)(|q|-1)}.
$$

\end{proof}

\begin{rem}\label{rem sat formula false}
A crucial part of this proof is the fact that the presentation of the group of the pattern knot inside its solid torus was easy to compute and manipulate (cf Section \ref{section}).

A general satellite formula mirroring the classical one for the Alexander polynomial (cf for instance \cite[Proposition 8.23]{BZ})is plainly untrue if written as $\Delta_{S_{C,P}}^{(2)}(t) = \Delta_C^{(2)}(t^{n_P}) \Delta_P^{(2)}(t)$. 

Indeed, if $P$ is a Whitehead double pattern inside the solid torus $T_P$, i.e. if $S_{C,P}$ is a Whitehead double of $C$, and if $C$ is a non trivial knot of Gromov norm zero, then $n_P$ is zero, $P$ is trivial in $S^3$ and $\Delta_{S_{C,P}}^{(2)}$ would then be the constant map $(t \mapsto 1)$ according to the previous formula; but we are going to show in Theorem \ref{thm detection unknot} that it cannot be since $S_{C,P}$ is not the unknot. 
\end{rem}

\begin{cor}
Let $K$ be a knot, $-K$ its inverse knot, and $P$ and $P_-$ Wirtinger presentations of their respective groups.
Then for all positive real numbers $t$, $(P,t)$ has Property $\mathcal{I}$ if and only if $(P_-,t^{-1})$ has Property $\mathcal{I}$, and in this case $$\Delta_{-K}^{(2)}(t^{-1}) = \Delta_K^{(2)}(t).$$
\end{cor}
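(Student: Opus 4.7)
The plan is to exploit the fact that $K$ and $-K$ share the same knot complement, so the groups $G:=G_K=G_{-K}$ coincide; the only change is in the abelianization. Since a positively oriented meridian of $-K$ is homotopic to the inverse of a positively oriented meridian of $K$, one has $\alpha_{-K}=-\alpha_K$, and consequently the algebra endomorphisms $\psi_{-K,t^{-1}}$ and $\psi_{K,t}$ of $\C[G]$ coincide for every $t>0$. This identification is the conceptual heart of the statement.

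Starting from a Wirtinger presentation $P=\langle a_1,\ldots,a_k\mid r_1,\ldots,r_{k-1}\rangle$ of $G_K$, I would introduce the deficiency-one presentation $P'=\langle A_1,\ldots,A_k\mid r_1',\ldots,r_{k-1}'\rangle$ of $G_{-K}$, with $A_i:=a_i^{-1}$ (which are meridians of $-K$) and $r_j'$ obtained from $r_j$ by the formal substitution $a_i\mapsto A_i^{-1}$. By Proposition \ref{prop inv HT L2 alex} and Theorem \ref{thm pas wirtinger}(1), $P'$ yields the same invariant class as any Wirtinger $P_-$, so it suffices to compare $(P,t)$ with $(P',t^{-1})$. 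Applying the Fox-derivative chain rule to the substitution $\phi\colon a_i\mapsto A_i^{-1}$ produces in $\C[G]$ the key identity
\[
\overline{\partial r_j'/\partial A_i}\;=\;-\overline{\partial r_j/\partial a_i}\cdot a_i,
\]
and after applying $\psi_{-K,t^{-1}}=\psi_{K,t}$ and passing to right-multiplication operators on $\ell^2(G)$, this becomes
\[
R_{\psi_{-K,t^{-1}}(F_{P',1})}\;=\;D\circ R_{\psi_{K,t}(F_{P,1})},
\]
where $D=\mathrm{diag}(-tR_{a_2},\ldots,-tR_{a_k})$ is block-diagonal on $\ell^2(G)^{k-1}$.

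Each $R_{a_i}$ is unitary, so $D$ is invertible; this immediately transfers injectivity and delivers the equivalence between Property $\mathcal{I}$ for $(P,t)$ and for $(P',t^{-1})$, hence for any Wirtinger $P_-$. Using Proposition \ref{prop operations det}(2)(3)(4) one computes $\det_{\mathcal{N}(G)}(D)=t^{k-1}$ and then
\[
\det_{\mathcal{N}(G)}\bigl(R_{\psi_{-K,t^{-1}}(F_{P',1})}\bigr)\;=\;t^{k-1}\cdot\det_{\mathcal{N}(G)}\bigl(R_{\psi_{K,t}(F_{P,1})}\bigr).
\]
Both correction factors in Theorem \ref{thm pas wirtinger}(1) are trivial since $|\alpha_K(a_1)|=|\alpha_{-K}(A_1)|=1$, so $\Delta_{-K,P'}^{(2)}(t^{-1})=t^{k-1}\,\Delta_{K,P}^{(2)}(t)$, which yields the desired equality once one passes to the invariant class modulo $t^{\Z}$. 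The main obstacle I anticipate is the Fox chain-rule computation itself: the signs and the left/right multiplications in the noncommutative group algebra must be tracked carefully, but once that identity is in hand everything else is a formal manipulation with the determinant machinery already developed (and is parallel to the diagrammatic argument given for the mirror image).
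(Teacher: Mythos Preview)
Your argument is correct, and it takes a genuinely different route from the paper. The paper's proof is a one-liner: it simply observes that $-K$ is the $(-1,m)$-cable of $K$ (for any $m$) and applies the cabling formula of Theorem~\ref{L2 cable}, which immediately gives $\Delta_{-K}^{(2)}(t)=\Delta_K^{(2)}(t^{-1})\cdot\max(1,t)^{0}$. Your approach instead exploits directly that $M_K=M_{-K}$, so that $G_K=G_{-K}$ as groups and $\alpha_{-K}=-\alpha_K$, whence $\psi_{-K,t^{-1}}=\psi_{K,t}$; you then compare Fox matrices via the substitution $A_i=a_i^{-1}$ and a diagonal correction, exactly in the spirit of the mirror-image computation already in Section~\ref{section preliminaries}.

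What each buys: the paper's proof is slick but leans on all of Section~\ref{section cabling} and the presentation work of Section~\ref{section}; yours is self-contained, needing only the chain rule for Fox derivatives and the basic determinant calculus of Proposition~\ref{prop operations det}, and would stand even without the cabling machinery. One small point to watch in your write-up: the presentation $P'$ you build is deficiency one but not literally Wirtinger, so the transfer of Property~$\mathcal{I}$ between $P'$ and a genuine Wirtinger $P_-$ rests on the same implicit extension of Proposition~\ref{prop inv HT L2 alex} to deficiency-one presentations that the paper itself uses in Theorem~\ref{L2 cable}; this is harmless here, but worth flagging.
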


\begin{proof}
Remark that $-K$ is a $(-1,m)$-cable of $K$ with $m$ any integer, and apply Theorem \ref{L2 cable}.
\end{proof}

\section{Detection of the unknot} \label{section unknot}

In \cite{Luc02}, L\"uck (Theorem 4.7 (2)) proves that the pair composed of the $L^2$-torsion and the Alexander polynomial detects the unknot. We prove a similar result for the $L^2$-Alexander invariant:

\begin{thm} \label{thm detection unknot}
Let $K$ be a knot in $S^3$.
The $L^2$-Alexander invariant of $K$ is trivial, i.e.
 $\left (t \mapsto \Delta_K^{(2)}(t) \right ) = (t \mapsto 1)$, if and only if $K$ is the trivial knot.
\end{thm}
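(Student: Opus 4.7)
The ``if'' direction is immediate from Example \ref{ex L2 unknot}. For the ``only if'' direction the plan is to argue the contrapositive: assume $K$ is non-trivial and show that $\Delta_K^{(2)}$ is not equivalent to $(t\mapsto 1)$ modulo $(t\mapsto t^{\mathbb{Z}})$. The main structural input is the dichotomy recalled in the introduction (cf.\ \cite{Mur}): either $\|M_K\|>0$, or $M_K$ is a graph manifold, in which case $K$ is obtained from the unknot by a finite sequence of connected sums and $(p,q)$-cablings with $|p|\geq 2$.

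In the first case Theorem \ref{thm L2 volume} directly gives
$$\Delta_K^{(2)}(1) \;=\; \exp\!\bigl(\tfrac{1}{6\pi}\|M_K\|\bigr) \;>\; 1.$$
Since the $t^{\mathbb{Z}}$-ambiguity collapses at $t=1$, this value is a well-defined numerical invariant strictly greater than $1$, so the class cannot be trivial.

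In the graph-manifold case I would induct on the length of a construction sequence, using Theorems \ref{L2 somme} and \ref{L2 cable}. The stronger inductive claim is that any graph knot admits a representative of its $L^2$-Alexander invariant of the form
$$\prod_i \max(1,t^{n_i})^{k_i},$$
with $n_i \in \mathbb{Z}\setminus\{0\}$ and $k_i \in \mathbb{N}$, such that at least one $k_i \geq 1$ as soon as $K$ is non-trivial. The base case is the unknot (empty product). A connected sum preserves the form by multiplying two such products, and a non-trivial summand propagates a non-zero exponent. A $(p,q)$-cabling with $|p|\geq 2$ substitutes $t\mapsto t^p$ in the representative of $\Delta_C^{(2)}$ (turning each $\max(1,t^{n_i})$ into $\max(1,t^{p n_i})$, with exponent still nonzero since $p\neq 0$) and then multiplies by $\max(1,t)^{(|p|-1)(|q|-1)}$. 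When $|q|\geq 2$ the latter is a new non-trivial factor; when $|q|=1$ the cable is genuinely new only if $C$ is non-trivial, in which case non-triviality is inherited through $\Delta_C^{(2)}(t^p)$ by the induction hypothesis.

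To finish, I would verify that a product of the above form is never in the trivial class by computing the logarithmic slopes on either side of $t=1$: on $(1,\infty)$ the slope equals $\sum_{n_i>0} k_i n_i \geq 0$, and on $(0,1)$ it equals $\sum_{n_i<0} k_i n_i \leq 0$. Equality to a monomial $t^m$ would force both slopes to equal $m$, hence $m=0$ and $k_i n_i = 0$ for each $i$; but $n_i\neq 0$ implies all $k_i=0$, contradicting non-triviality of the product. The main obstacle I anticipate is the bookkeeping inside the cabling inductive step---ensuring the product form really is preserved and that no exponent collapses to zero (guaranteed by $|p|\geq 2$)---together with the external structure theorem from \cite{Mur} that underlies the reduction to the graph-manifold case.
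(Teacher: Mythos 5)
Your proposal is correct and follows essentially the same route as the paper: the dichotomy from \cite{Mur} between positive Gromov norm (settled at $t=1$ by Theorem \ref{thm L2 volume}) and graph knots built from the unknot by connected sums and cablings, followed by an induction using Theorems \ref{L2 somme} and \ref{L2 cable}. The only cosmetic difference is that the paper collapses the inductive normal form to a single power $\max(1,t)^{n_K}$ with $n_K \in \N$ (via $\max(1,t^k)=t^{(k-|k|)/2}\max(1,t)^{|k|}$, valid up to the $t^{\Z}$ ambiguity) and tracks the recursion $n_S=|p|\,n_C+(|p|-1)(|q|-1)$, handling the $p=\pm1$ cablings explicitly rather than excluding them, whereas you keep a product of factors $\max(1,t^{n_i})^{k_i}$ and detect non-triviality by comparing logarithmic slopes on either side of $t=1$.
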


This seems to confirm that the $L^2$-Alexander invariant can be seen as a generalization of both the $L^2$-torsion (i.e. the Gromov norm) and the Alexander polynomial.

\begin{proof}

First, let $K_0$ be an arbitrary knot.
If the exterior of $K_0$ has hyperbolic pieces in its JSJ decomposition, then $\Delta_{K_0}^{(2)}(1) \neq 1$, by Theorem \ref{thm L2 volume}. Therefore, let us assume $\tilde{K}$ is a knot whose exterior does not have hyperbolic pieces and such that $\Delta_{\tilde{K}}^{(2)} = (t \mapsto 1)$. Let us prove that $\tilde{K}$ is the unknot.

Besides, \cite[Lemma 5.5]{Mur} tells us that if we call $\mathcal{K}$ the class of knots generated by the unknot, the connected sum operation, and all cabling operations (for all torus knot patterns), then $\tilde{K} \in \mathcal{K}$.

Let us prove that for all knots $K$ in the class $\mathcal{K}$, $\ \Delta_K^{(2)} = (t \mapsto \max(1,t)^{n_K})$ where $n_K$ is a nonnegative integer. 

From Example \ref{ex L2 unknot}, it is true for the unknot and $n_O = 0$. Secondly, if the property is true for $K_1$ and $K_2$ in $\mathcal{K}$, then, by Theorem \ref{L2 somme}, it is true for their connected sum $K_1 \sharp K_2$ and $n_{K_1 \sharp K_2} = n_{K_1} + n_{K_2}$. Finally, if the property is true for $C \in \mathcal{K}$ and $S$ is the $(p,q)$-cable of $C$, then, it is true for $S$ and
$n_S = |p| \cdot n_C + (|p|-1)(|q|-1)$, by Theorem \ref{L2 cable}.

Observe that $n_{K_1 \sharp K_2}=0$ if and only if $n_{K_1}=n_{K_2}=0$, and $n_S=0$ if and only if $n_C=0$ and $p= \pm 1$ (i.e. the cabling operation is trivial or the knot inversion).
Therefore, the subclass $\mathcal{K}'$ of knots $K'$ in $\mathcal{K}$ such that $n_{K'}=0$ is exactly the class generated by $O$, the connected sum, the trivial cabling operation and the reversing of the orientation of the knot. But this class is reduced to $O$. Therefore, for $K \in \mathcal{K}$, $n_K=0$ if and only if $K=O$.

Thus, if $\tilde{K}$ is a knot whose exterior does not have hyperbolic pieces and such that $\Delta_{\tilde{K}}^{(2)} = (t \mapsto 1)$, then $\tilde{K}$ is the unknot. The theorem follows.

\end{proof}

\section{Proof of Proposition \ref{prop groupe cable}} \label{section}

The object of this section is to prove the proposition

\begin{divers}
Let us consider the $(p,q)$-cable knot $S$ of companion $C$. 

(1) There exists $P_C = \langle a_1 , \ldots , a_k | r_1 , \ldots, r_{k-1}   \rangle$ a Wirtinger presentation of $G_C$  such that
$$P_S = \langle a_1 , \ldots , a_k, x, \lambda | r_1 , \ldots, r_{k-1}, x^p  a_k^{-q} \lambda^{-p} ,
\lambda^{-1} W(a_i) 
   \rangle$$
is a presentation of $G_S$, with  $x$ and $\lambda$ the homotopy classes of the core and a longitude of $T_C$, and $W(a_i)$ a word in the $a_1, \ldots ,a_k$.

(2) Furthermore, $\alpha_{S}(x) = q$, $\alpha_{S}(\lambda) =0$ and $\alpha_{S}(a_i) = p$, for $i=1, \ldots , k$.

\end{divers}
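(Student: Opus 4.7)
The plan is to compute $G_S$ by a double application of Seifert--van Kampen, following the natural two-torus decomposition of the exterior $M_S$ of the cable. The outer torus is $\partial T_C$, which separates $M_S$ into the companion exterior $M_C = \overline{S^3 \setminus T_C}$ and the ``cable-inside-solid-torus'' piece $V = \overline{T_C \setminus N(S)}$. For $G_C = \pi_1(M_C)$ I would take a Wirtinger presentation $P_C = \langle a_1,\ldots,a_k \,|\, r_1,\ldots,r_{k-1}\rangle$ coming from a diagram of $C$ with the base point chosen on the arc corresponding to $\mu_C$, so that the generator $a_k$ equals $\mu_C$; the preferred longitude $\lambda_C$ is then some explicit word $W(a_i)$ in the $a_i$'s, and the classical fact that meridian and longitude commute in a knot group gives $[a_k, W(a_i)] = 1$ in $G_C$.

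The heart of the proof is the computation of $\pi_1(V)$. Let $W \subset T_C$ be a concentric solid sub-torus on whose boundary the cable $S = T(p,q)$ lies. Splitting $V$ along $\partial W$ yields $V = V_1 \cup_A V_2$, with $V_1 = W \setminus N(S)$, $V_2 = (T_C \setminus W) \setminus N(S)$ and $A = \partial W \setminus N(S)$ an annulus. Applying Seifert--van Kampen to the decomposition $W = V_1 \cup (N(S) \cap W)$, whose intersection is the inner annulus of $\partial N(S)$, one obtains $\pi_1(V_1) \cong \pi_1(W) = \langle x \rangle$ with $x$ the core of $T_C$; the same argument applied to the thickened torus $T_C \setminus W \cong T^2 \times I$ yields $\pi_1(V_2) \cong \pi_1(\partial T_C) = \langle \mu_C, \lambda_C \,|\, [\mu_C,\lambda_C]\rangle$. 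The generator of $\pi_1(A)$ is the class of a parallel copy of $S$ on $\partial W$, of homology $q[\mu_W] + p[\lambda_W]$ by definition of $T(p,q)$; under $A \hookrightarrow V_1$ it becomes $x^p$ (since $\mu_W$ dies in $W$ and $\lambda_W$ is isotopic to the core $x$) and under $A \hookrightarrow V_2$ it becomes $\mu_C^q \lambda_C^p$ (the meridian--longitude pair of $\partial W$ is isotopic in $T_C \setminus W$ to that of $\partial T_C$). Van Kampen then gives
\[
\pi_1(V) = \langle x, \mu_C, \lambda_C \,|\, [\mu_C,\lambda_C],\; x^p \mu_C^{-q}\lambda_C^{-p}\rangle.
\]

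A second application of Seifert--van Kampen to $M_S = M_C \cup_{\partial T_C} V$, identifying $\mu_C \in \pi_1(V)$ with $a_k \in G_C$ and $\lambda_C \in \pi_1(V)$ with $W(a_i) \in G_C$, then yields exactly the presentation $P_S$ of part~(1); note the commutator relation $[\mu_C, \lambda_C]$ is already a consequence of the $r_j$'s and so need not be listed. For part~(2), I would use that $\alpha_S(g) = \mathrm{lk}(g, S)$ for any loop $g$ in $G_S$. Since a meridian disk of $T_C$ meets $S$ in $n_P = p$ points, $\alpha_S(a_i) = \mathrm{lk}(\mu_C, S) = p$. Since $[S] = q[\mu_C] + p[\lambda_C]$ in $H_1(T_C \setminus C)$ and $[\lambda_C]$ dies in $H_1(S^3 \setminus C) = \Z$, we get $\alpha_S(x) = \mathrm{lk}(C, S) = q$. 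Finally $\lambda_C$ bounds a Seifert surface of $C$ contained in $M_C$ and disjoint from $S \subset T_C$, so $\alpha_S(\lambda) = 0$.

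The main obstacle will be carefully justifying the two homotopy equivalences $V_1 \simeq W$ and $V_2 \simeq \partial T_C$, together with tracking the identifications of meridian--longitude pairs between the concentric tori $\partial W$ and $\partial T_C$; once these are in hand, the Seifert--van Kampen assemblies and the linking-number evaluations are routine.
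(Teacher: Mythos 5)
Your proposal is correct and follows essentially the same route as the paper: the inner splitting of $\overline{T_C\setminus N(S)}$ along the concentric torus carrying $T(p,q)$ is exactly the paper's Lemma \ref{lem groupe Tpq} (computing $\langle x,y,\lambda \mid x^p=\lambda^p y^q,\ \lambda y=y\lambda\rangle$), and your outer gluing along $\partial T_C$ with $\mu_C\mapsto a_k$, $\lambda_C\mapsto W(a_i)$ and the elimination of the now-redundant commutator is the paper's Lemma \ref{lem groupe sat} plus its final simplification. The only cosmetic difference is in part (2), where you evaluate $\alpha_S(x)=q$ directly as a linking number while the paper reads it off from the relation $x^p a_k^{-q}\lambda^{-p}$; both are fine.
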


\subsection{Group of a torus knot pattern}

Let $T_{int}$ be an open solid torus and $T_{ext}$ an open tubular neighboorhood of $T_{int}$, thus a second solid torus.
We will draw the torus knot $K = T(p,q)$ on the boundary of $T_{int}$. Let us take $pt$ any point on $\partial T_{int} \smallsetminus K$. It will be the base point for all the following fundamental groups.
Figure \ref{fig torus inside torus} (where $p=3$ and $q=4$) should clarify the notations.

\begin{figure}[!h]
\centering

\begin{tikzpicture}[every path/.style={string ,black} , every node/.style={transform shape , knot crossing , inner sep=1.5 pt } ]

\begin{scope}[xshift=0cm,rotate=0,scale=1.2]

\draw (-3.45,1.25) node {$K$} ;

\draw (-2,-0.8) node {$T_{int}$} ;

\coordinate (ch) at (-2,-1) ;
\coordinate (cb) at (-2,-2) ;
\draw [style=dotted] (ch) ..controls +(0.5,0) and +(0.5,0).. (cb) ;
\draw (ch) ..controls +(-0.5,0) and +(-0.5,0).. (cb) ;

\coordinate (t1) at (-3,2) ;
\coordinate (t2) at (3,2) ;
\coordinate (t3) at (-3,1) ;
\coordinate (t4) at (3,1) ;
\coordinate (t5) at (-3,-1) ;
\coordinate (t6) at (3,-1) ;
\coordinate (t7) at (-3,-2) ;
\coordinate (t8) at (3,-2) ;
\draw (t1) -- (t2) ..controls +(2.5,0) and +(2.5,0).. (t8) -- (t7) ..controls +(-2.5,0) and +(-2.5,0).. (t1) ;
\draw (t3) -- (t4) ..controls +(1,0) and +(1,0).. (t6) -- (t5) ..controls +(-1,0) and +(-1,0).. (t3) ;

\coordinate (k1) at (-3,1.8) ;
\coordinate (k2) at (3,1.8) ;
\coordinate (k3) at (-3,1.2) ;
\coordinate (k4) at (3,1.2) ;
\coordinate (k5) at (-3,-1.2) ;
\coordinate (k6) at (3,-1.2) ;
\coordinate (k7) at (-3,-1.8) ;
\coordinate (k8) at (3,-1.8) ;
\coordinate (k9) at (-3,1.5) ;
\coordinate (k10) at (3,1.5) ;
\coordinate (k11) at (3,-1.5) ;
\coordinate (k12) at (-3,-1.5) ;
\draw (k2) ..controls +(2.3,0) and +(2.3,0).. (k8) -- (k7) ..controls +(-2.3,0) and +(-2.3,0).. (k1) ;
\draw (k4) ..controls +(1.3,0) and +(1.3,0).. (k6) -- (k5) ..controls +(-1.3,0) and +(-1.3,0).. (k3) ;
\draw [style=dotted] (k10) ..controls +(1.8,0) and +(1.8,0).. (k11) -- (k12) ..controls +(-1.8,0) and +(-1.8,0).. (k9) ;

\begin{scope}[xshift=-3cm]
\coordinate (kh) at (0,1.8) ;
\coordinate (km) at (0,1.5) ;
\coordinate (kb) at (0,1.2) ;
\coordinate (khi) at (0.5,2) ;
\coordinate (kbi) at (1,1) ;
\coordinate (kh') at (1.5,1.8) ;
\coordinate (km') at (1.5,1.5) ;
\coordinate (kb') at (1.5,1.2) ;
\draw  (kh) ..controls +(0.1,0) and +(-0.1,0).. (kb') ;
\draw [style=dotted] (km) ..controls +(0.1,0) and +(-0.1,0).. (khi) ;
\draw  (khi) ..controls +(0.1,0) and +(-0.1,0).. (kh') ;
\draw  (kb) ..controls +(0.1,0) and +(-0.1,0).. (kbi) ;
\draw [style=dotted] (kbi) ..controls +(0.1,0) and +(-0.1,0).. (km') ;
\end{scope}

\begin{scope}[xshift=-1.5cm]
\coordinate (kh) at (0,1.8) ;
\coordinate (km) at (0,1.5) ;
\coordinate (kb) at (0,1.2) ;
\coordinate (khi) at (0.5,2) ;
\coordinate (kbi) at (1,1) ;
\coordinate (kh') at (1.5,1.8) ;
\coordinate (km') at (1.5,1.5) ;
\coordinate (kb') at (1.5,1.2) ;
\draw  (kh) ..controls +(0.1,0) and +(-0.1,0).. (kb') ;
\draw [style=dotted] (km) ..controls +(0.1,0) and +(-0.1,0).. (khi) ;
\draw  (khi) ..controls +(0.1,0) and +(-0.1,0).. (kh') ;
\draw  (kb) ..controls +(0.1,0) and +(-0.1,0).. (kbi) ;
\draw [style=dotted] (kbi) ..controls +(0.1,0) and +(-0.1,0).. (km') ;
\end{scope}

\begin{scope}[xshift=0cm]
\coordinate (kh) at (0,1.8) ;
\coordinate (km) at (0,1.5) ;
\coordinate (kb) at (0,1.2) ;
\coordinate (khi) at (0.5,2) ;
\coordinate (kbi) at (1,1) ;
\coordinate (kh') at (1.5,1.8) ;
\coordinate (km') at (1.5,1.5) ;
\coordinate (kb') at (1.5,1.2) ;
\draw  (kh) ..controls +(0.1,0) and +(-0.1,0).. (kb') ;
\draw [style=dotted] (km) ..controls +(0.1,0) and +(-0.1,0).. (khi) ;
\draw  (khi) ..controls +(0.1,0) and +(-0.1,0).. (kh') ;
\draw  (kb) ..controls +(0.1,0) and +(-0.1,0).. (kbi) ;
\draw [style=dotted] (kbi) ..controls +(0.1,0) and +(-0.1,0).. (km') ;
\end{scope}

\begin{scope}[xshift=1.5cm]
\coordinate (kh) at (0,1.8) ;
\coordinate (km) at (0,1.5) ;
\coordinate (kb) at (0,1.2) ;
\coordinate (khi) at (0.5,2) ;
\coordinate (kbi) at (1,1) ;
\coordinate (kh') at (1.5,1.8) ;
\coordinate (km') at (1.5,1.5) ;
\coordinate (kb') at (1.5,1.2) ;
\draw  (kh) ..controls +(0.1,0) and +(-0.1,0).. (kb') ;
\draw [style=dotted] (km) ..controls +(0.1,0) and +(-0.1,0).. (khi) ;
\draw  (khi) ..controls +(0.1,0) and +(-0.1,0).. (kh') ;
\draw  (kb) ..controls +(0.1,0) and +(-0.1,0).. (kbi) ;
\draw [style=dotted] (kbi) ..controls +(0.1,0) and +(-0.1,0).. (km') ;
\end{scope}

\draw (1,0) arc (70:110:2.92) ;
\draw (1,0) arc (-70:-110:2.92) ;

\coordinate (th) at (0,3) ;
\coordinate (td) at (5.5,0) ;
\coordinate (tb) at (0,-3) ;
\coordinate (tg) at (-5.5,0) ;
\draw (th) ..controls +(5,0) and +(0,1)..
(td) ..controls +(0,-1) and +(5,0)..
(tb) ..controls +(-5,0) and +(0,-1)..
(tg) ..controls +(0,1) and +(-5,0)..
(th) ;

\coordinate (c) at (0,0.17) ;
\draw [style=dotted] (th) ..controls +(0.5,0) and +(0.5,0).. (c) ;
\draw (th) ..controls +(-0.5,0) and +(-0.5,0).. (c) ;
\draw (0,3.2) node {$T_{ext}$} ;

\end{scope}
\end{tikzpicture}

\caption{The inside and outside tori $T_{int}$ and $T_{ext}$ and the $(p,q)$-torus knot $K$} \label{fig torus inside torus}
\end{figure}
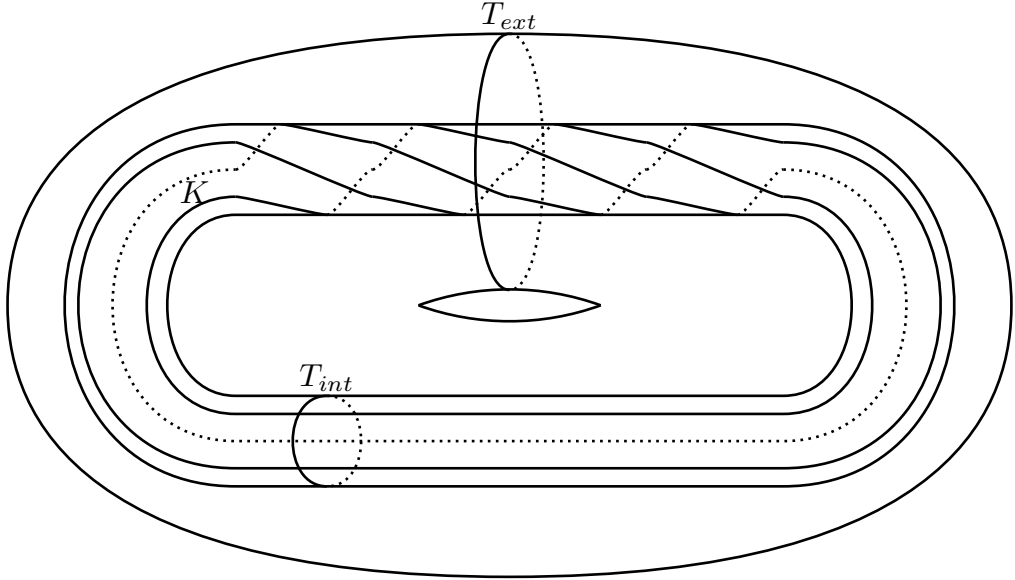

We want to prove the following result:

\begin{lem} \label{lem groupe Tpq}
$P_{p,q} = \langle x , y , \lambda | x^p = \lambda^p y^q, \lambda y = y \lambda \rangle$ is a presentation of \\
$\tilde{G}_{p,q} = \pi_1(T_{ext} \smallsetminus K)$.
Furthermore, the elements of $\tilde{G}_{p,q}$ represented by $\lambda$ and $y$ are the homotopy classes of a longitude curve and a meridian curve of $T_{ext} \setminus \overline{T_{int}}$, and $x$ is the homotopy class of the core of $T_{int}$.
\end{lem}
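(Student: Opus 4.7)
The plan is to apply the Seifert--van Kampen theorem to $X := T_{ext} \setminus K$ using an open cover adapted to the separating annulus $A := \partial T_{int} \setminus K$, which is connected because $\gcd(p,q)=1$. Let $W$ be a thin open bicollar of $A$ in $T_{ext}$, so $W \cong A \times (-\epsilon, \epsilon)$; note that $W$ misses $K$ by construction. Set
\[
V_1 := T_{int} \cup W, \qquad V_2 := (T_{ext} \setminus \overline{T_{int}}) \cup W.
\]
Both $V_1$ and $V_2$ are open in $T_{ext}$, both miss $K$, both are path-connected, both contain the base point $pt \in A$, and one has $V_1 \cup V_2 = X$ and $V_1 \cap V_2 = W$.

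I would next identify the fundamental groups of these pieces. The bicollar $W$ deformation retracts onto $A$, and a further push along an internal collar shows that $V_1$ deformation retracts onto $T_{int}$ (an open solid torus), while $V_2$ retracts onto the shell $T_{ext} \setminus \overline{T_{int}}$, which itself retracts onto $\partial T_{int} \cong T^2$. Hence $\pi_1(V_1) = \langle x \rangle \cong \Z$ with $x$ the core class of $T_{int}$; $\pi_1(V_2) = \langle \lambda, y \mid \lambda y = y\lambda \rangle$ with $\lambda, y$ a longitude-meridian pair on $\partial T_{int}$; and $\pi_1(V_1 \cap V_2) \cong \Z$ generated by a push-off $\gamma$ of $K$ in $\partial T_{int}$. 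Under the two inclusions, $\gamma \mapsto \lambda^p y^q$ in $\pi_1(V_2)$ since $K$ is the $(p,q)$-curve on $\partial T_{int}$, whereas $\gamma \mapsto x^p$ in $\pi_1(V_1)$ since, after retracting into $T_{int}$, the longitude class $\lambda$ becomes homotopic to the core $x$ while the meridian class $y$ bounds a meridian disk and is null-homotopic.

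The Seifert--van Kampen theorem then yields
\[
\pi_1(X) = \pi_1(V_1) *_{\pi_1(V_1 \cap V_2)} \pi_1(V_2) = \langle x, y, \lambda \mid x^p = \lambda^p y^q,\ \lambda y = y\lambda \rangle = P_{p,q},
\]
and the geometric identifications of $x$, $\lambda$, $y$ are exactly those demanded by the second sentence of the lemma. The main subtlety is to collar only the annulus $A$ rather than all of $\partial T_{int}$: thickening naïvely to an open neighborhood of $\overline{T_{int}}$ would place $K$ in the topological interior of one open piece, reducing that piece to the complement of a torus knot inside a solid torus---which is precisely the group under computation. Keeping $K$ on the topological boundary of both $V_1$ and $V_2$ ensures that each piece remains topologically trivial, and the $(p,q)$ data enters only through the identification of the generator of $\pi_1(V_1 \cap V_2)$ with $\lambda^p y^q$ in $\pi_1(\partial T_{int})$.
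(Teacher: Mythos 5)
Your proposal is correct and follows essentially the same route as the paper: a Seifert--van Kampen decomposition into the inner solid torus and the outer shell $T_{ext}\setminus\overline{T_{int}}$, glued along the annulus $\partial T_{int}\setminus K$, with the annulus generator mapping to $x^p$ on one side and to $\lambda^p y^q$ (the paper writes $y^q\lambda^p$, the same element since they commute) on the other. Your extra care in taking a bicollar of the annulus to get a genuine open cover is a technical refinement of what the paper does implicitly, not a different argument.
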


The following proof has been inspired by the calculation of the classical presentation of torus knot groups, cf for example \cite[Section 3.C]{Rol}.

\begin{proof}
We will use the Seifert-van Kampen theorem. 

We note 
$ U_1 = T_{ext} \smallsetminus ( T_{int} \sqcup K ) $, 
$U_2 = \overline{T_{int}} \smallsetminus K$, 
$W = T_{ext} \smallsetminus K$, $V = \partial T_{int} \smallsetminus K$ and $G_1, G_2, G, G_0$ their respective fundamental groups (for the same base point $pt$ in $V$).

$U_1$ can be deformed to $T_{ext} \smallsetminus T_{int}$ (by  \guillemotleft  filling up $K$\guillemotright ), and so it is homotopically equivalent to a $2$-torus. Thus $\langle y , \lambda | y \lambda = \lambda y \rangle $ is a presentation of $G_1$, where $y$ and $\lambda$ are the homotopy classes of a natural meridian-longitude system of $T_{ext} \setminus T_{int}$, see Figure \ref{fig meridian longitude}.

\begin{figure}[!h]
\centering

\begin{tikzpicture}[every path/.style={string ,black} , every node/.style={transform shape , knot crossing , inner sep=1.5 pt } ]

\begin{scope}[xshift=0cm,rotate=0,scale=1]

\draw (-2,-0.8) node {$T_{int}$} ;

\coordinate (ch) at (-2,-1) ;
\coordinate (cb) at (-2,-2) ;
\draw [style=dotted] (ch) ..controls +(0.5,0) and +(0.5,0).. (cb) ;
\draw (ch) ..controls +(-0.5,0) and +(-0.5,0).. (cb) ;

\coordinate (t1) at (-3,2) ;
\coordinate (t2) at (3,2) ;
\coordinate (t3) at (-3,1) ;
\coordinate (t4) at (3,1) ;
\coordinate (t5) at (-3,-1) ;
\coordinate (t6) at (3,-1) ;
\coordinate (t7) at (-3,-2) ;
\coordinate (t8) at (3,-2) ;
\draw (t1) -- (t2) ..controls +(2.5,0) and +(2.5,0).. (t8) -- (t7) ..controls +(-2.5,0) and +(-2.5,0).. (t1) ;
\draw (t3) -- (t4) ..controls +(1,0) and +(1,0).. (t6) -- (t5) ..controls +(-1,0) and +(-1,0).. (t3) ;

\coordinate (k1) at (-3,1.8) ;
\coordinate (k2) at (3,1.8) ;
\coordinate (k3) at (-3,1.2) ;
\coordinate (k4) at (3,1.2) ;
\coordinate (k5) at (-3,-1.2) ;
\coordinate (k6) at (3,-1.2) ;
\coordinate (k7) at (-3,-1.8) ;
\coordinate (k8) at (3,-1.8) ;
\coordinate (k9) at (-3,1.5) ;
\coordinate (k10) at (3,1.5) ;
\coordinate (k11) at (3,-1.5) ;
\coordinate (k12) at (-3,-1.5) ;
\draw (k2) ..controls +(2.3,0) and +(2.3,0).. (k8) -- (k7) ..controls +(-2.3,0) and +(-2.3,0).. (k1) ;
\draw (k4) ..controls +(1.3,0) and +(1.3,0).. (k6) -- (k5) ..controls +(-1.3,0) and +(-1.3,0).. (k3) ;
\draw [style=dotted] (k10) ..controls +(1.8,0) and +(1.8,0).. (k11) -- (k12) ..controls +(-1.8,0) and +(-1.8,0).. (k9) ;

\begin{scope}[xshift=-3cm]
\coordinate (kh) at (0,1.8) ;
\coordinate (km) at (0,1.5) ;
\coordinate (kb) at (0,1.2) ;
\coordinate (khi) at (0.5,2) ;
\coordinate (kbi) at (1,1) ;
\coordinate (kh') at (1.5,1.8) ;
\coordinate (km') at (1.5,1.5) ;
\coordinate (kb') at (1.5,1.2) ;
\draw  (kh) ..controls +(0.1,0) and +(-0.1,0).. (kb') ;
\draw [style=dotted] (km) ..controls +(0.1,0) and +(-0.1,0).. (khi) ;
\draw  (khi) ..controls +(0.1,0) and +(-0.1,0).. (kh') ;
\draw  (kb) ..controls +(0.1,0) and +(-0.1,0).. (kbi) ;
\draw [style=dotted] (kbi) ..controls +(0.1,0) and +(-0.1,0).. (km') ;
\end{scope}

\begin{scope}[xshift=-1.5cm]
\coordinate (kh) at (0,1.8) ;
\coordinate (km) at (0,1.5) ;
\coordinate (kb) at (0,1.2) ;
\coordinate (khi) at (0.5,2) ;
\coordinate (kbi) at (1,1) ;
\coordinate (kh') at (1.5,1.8) ;
\coordinate (km') at (1.5,1.5) ;
\coordinate (kb') at (1.5,1.2) ;
\draw  (kh) ..controls +(0.1,0) and +(-0.1,0).. (kb') ;
\draw [style=dotted] (km) ..controls +(0.1,0) and +(-0.1,0).. (khi) ;
\draw  (khi) ..controls +(0.1,0) and +(-0.1,0).. (kh') ;
\draw  (kb) ..controls +(0.1,0) and +(-0.1,0).. (kbi) ;
\draw [style=dotted] (kbi) ..controls +(0.1,0) and +(-0.1,0).. (km') ;
\end{scope}

\begin{scope}[xshift=0cm]
\coordinate (kh) at (0,1.8) ;
\coordinate (km) at (0,1.5) ;
\coordinate (kb) at (0,1.2) ;
\coordinate (khi) at (0.5,2) ;
\coordinate (kbi) at (1,1) ;
\coordinate (kh') at (1.5,1.8) ;
\coordinate (km') at (1.5,1.5) ;
\coordinate (kb') at (1.5,1.2) ;
\draw  (kh) ..controls +(0.1,0) and +(-0.1,0).. (kb') ;
\draw [style=dotted] (km) ..controls +(0.1,0) and +(-0.1,0).. (khi) ;
\draw  (khi) ..controls +(0.1,0) and +(-0.1,0).. (kh') ;
\draw  (kb) ..controls +(0.1,0) and +(-0.1,0).. (kbi) ;
\draw [style=dotted] (kbi) ..controls +(0.1,0) and +(-0.1,0).. (km') ;
\end{scope}

\begin{scope}[xshift=1.5cm]
\coordinate (kh) at (0,1.8) ;
\coordinate (km) at (0,1.5) ;
\coordinate (kb) at (0,1.2) ;
\coordinate (khi) at (0.5,2) ;
\coordinate (kbi) at (1,1) ;
\coordinate (kh') at (1.5,1.8) ;
\coordinate (km') at (1.5,1.5) ;
\coordinate (kb') at (1.5,1.2) ;
\draw  (kh) ..controls +(0.1,0) and +(-0.1,0).. (kb') ;
\draw [style=dotted] (km) ..controls +(0.1,0) and +(-0.1,0).. (khi) ;
\draw  (khi) ..controls +(0.1,0) and +(-0.1,0).. (kh') ;
\draw  (kb) ..controls +(0.1,0) and +(-0.1,0).. (kbi) ;
\draw [style=dotted] (kbi) ..controls +(0.1,0) and +(-0.1,0).. (km') ;
\end{scope}

\coordinate (yh) at (0,-0.7) ;
\coordinate (yb) at (0,-2.3) ;
\draw [<-] [very thick, color=brown] (0.3,-1) -- (0.3,-0.7) -- (0,-0.7);
\draw [very thick, color=brown] (0,-0.7) -- (-0.3,-0.7) -- (-0.3,-2.3) -- (0.3,-2.3) -- (0.3,-2) ;
\draw [very thick, color=brown] (-0.45,-0.75) node {$y$} ;

\draw (1,0) arc (70:110:2.92) ;
\draw (1,0) arc (-70:-110:2.92) ;

\coordinate (th) at (0,3) ;
\coordinate (td) at (5.5,0) ;
\coordinate (tb) at (0,-3) ;
\coordinate (tg) at (-5.5,0) ;
\draw (th) ..controls +(5,0) and +(0,1)..
(td) ..controls +(0,-1) and +(5,0)..
(tb) ..controls +(-5,0) and +(0,-1)..
(tg) ..controls +(0,1) and +(-5,0)..
(th) ;

\coordinate (c) at (0,0.17) ;
\draw [style=dotted] (th) ..controls +(0.5,0) and +(0.5,0).. (c) ;
\draw (th) ..controls +(-0.5,0) and +(-0.5,0).. (c) ;
\draw (0,3.2) node {$T_{ext}$} ;

\draw [->] [color=magenta, very thick] (2,0.4) -- (2,-0.4) -- (-1.5,-0.4) ;
\draw [color=magenta, very thick] (-1.5,-0.4) -- (-2,-0.4) -- (-2,0.4) -- (2,0.4) ;
\draw [color=magenta](-1.5,-0.15) node {$\lambda$} ;

\end{scope}
\end{tikzpicture}

\caption{A natural meridian-longitude system} \label{fig meridian longitude}
\end{figure}
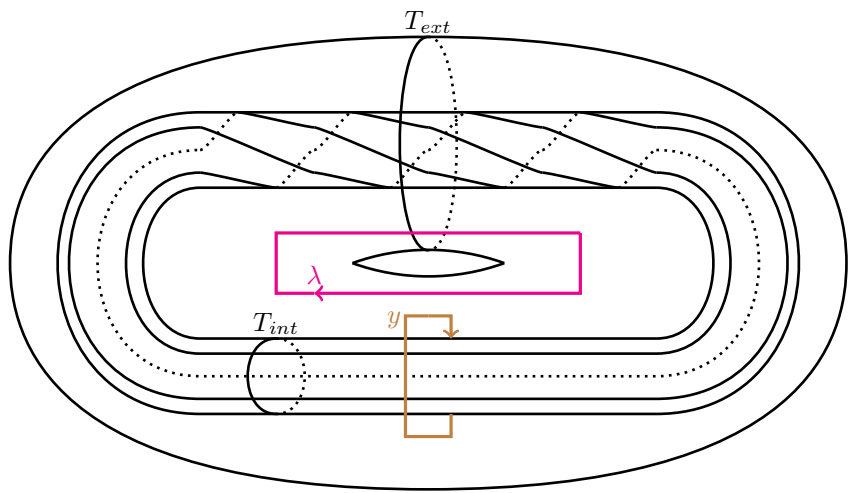

$U_2$ can be deformed to $T_{int}$ by a similar process, therefore $G_{2}$ admits the presentation $\langle x | - \rangle$, where $x$ is the homotopy class of the core of $T_{int}$, cf Figure \ref{fig torus x}.

\begin{figure}[!h]
\centering

\begin{tikzpicture}[every path/.style={string ,black} , every node/.style={transform shape , knot crossing , inner sep=1.5 pt } ]

\begin{scope}[xshift=0cm,rotate=0,scale=1]

\draw (-2,-0.8) node {$T_{int}$} ;

\coordinate (ch) at (-2,-1) ;
\coordinate (cb) at (-2,-2) ;
\draw [color=white,fill=gray!25] (ch) ..controls +(0.5,0) and +(0.5,0).. (cb) ..controls +(-0.5,0) and +(-0.5,0).. (ch) ;
\draw [style=dotted] (ch) ..controls +(0.5,0) and +(0.5,0).. (cb) ;
\draw (ch) ..controls +(-0.5,0) and +(-0.5,0).. (cb) ;

\coordinate (t1) at (-3,2) ;
\coordinate (t2) at (3,2) ;
\coordinate (t3) at (-3,1) ;
\coordinate (t4) at (3,1) ;
\coordinate (t5) at (-3,-1) ;
\coordinate (t6) at (3,-1) ;
\coordinate (t7) at (-3,-2) ;
\coordinate (t8) at (3,-2) ;
\draw (t1) -- (t2) ..controls +(2.5,0) and +(2.5,0).. (t8) -- (t7) ..controls +(-2.5,0) and +(-2.5,0).. (t1) ;
\draw (t3) -- (t4) ..controls +(1,0) and +(1,0).. (t6) -- (t5) ..controls +(-1,0) and +(-1,0).. (t3) ;

\coordinate (k1) at (-3,1.8) ;
\coordinate (k2) at (3,1.8) ;
\coordinate (k3) at (-3,1.2) ;
\coordinate (k4) at (3,1.2) ;
\coordinate (k5) at (-3,-1.2) ;
\coordinate (k6) at (3,-1.2) ;
\coordinate (k7) at (-3,-1.8) ;
\coordinate (k8) at (3,-1.8) ;
\coordinate (k9) at (-3,1.5) ;
\coordinate (k10) at (3,1.5) ;
\coordinate (k11) at (3,-1.5) ;
\coordinate (k12) at (-3,-1.5) ;
\draw (k2) ..controls +(2.3,0) and +(2.3,0).. (k8) -- (k7) ..controls +(-2.3,0) and +(-2.3,0).. (k1) ;
\draw (k4) ..controls +(1.3,0) and +(1.3,0).. (k6) -- (k5) ..controls +(-1.3,0) and +(-1.3,0).. (k3) ;
\draw [style=dotted] (k10) ..controls +(1.8,0) and +(1.8,0).. (k11) -- (k12) ..controls +(-1.8,0) and +(-1.8,0).. (k9) ;

\begin{scope}[xshift=-3cm]
\coordinate (kh) at (0,1.8) ;
\coordinate (km) at (0,1.5) ;
\coordinate (kb) at (0,1.2) ;
\coordinate (khi) at (0.5,2) ;
\coordinate (kbi) at (1,1) ;
\coordinate (kh') at (1.5,1.8) ;
\coordinate (km') at (1.5,1.5) ;
\coordinate (kb') at (1.5,1.2) ;
\draw  (kh) ..controls +(0.1,0) and +(-0.1,0).. (kb') ;
\draw [style=dotted] (km) ..controls +(0.1,0) and +(-0.1,0).. (khi) ;
\draw  (khi) ..controls +(0.1,0) and +(-0.1,0).. (kh') ;
\draw  (kb) ..controls +(0.1,0) and +(-0.1,0).. (kbi) ;
\draw [style=dotted] (kbi) ..controls +(0.1,0) and +(-0.1,0).. (km') ;
\end{scope}

\begin{scope}[xshift=-1.5cm]
\coordinate (kh) at (0,1.8) ;
\coordinate (km) at (0,1.5) ;
\coordinate (kb) at (0,1.2) ;
\coordinate (khi) at (0.5,2) ;
\coordinate (kbi) at (1,1) ;
\coordinate (kh') at (1.5,1.8) ;
\coordinate (km') at (1.5,1.5) ;
\coordinate (kb') at (1.5,1.2) ;
\draw  (kh) ..controls +(0.1,0) and +(-0.1,0).. (kb') ;
\draw [style=dotted] (km) ..controls +(0.1,0) and +(-0.1,0).. (khi) ;
\draw  (khi) ..controls +(0.1,0) and +(-0.1,0).. (kh') ;
\draw  (kb) ..controls +(0.1,0) and +(-0.1,0).. (kbi) ;
\draw [style=dotted] (kbi) ..controls +(0.1,0) and +(-0.1,0).. (km') ;
\end{scope}

\begin{scope}[xshift=0cm]
\coordinate (kh) at (0,1.8) ;
\coordinate (km) at (0,1.5) ;
\coordinate (kb) at (0,1.2) ;
\coordinate (khi) at (0.5,2) ;
\coordinate (kbi) at (1,1) ;
\coordinate (kh') at (1.5,1.8) ;
\coordinate (km') at (1.5,1.5) ;
\coordinate (kb') at (1.5,1.2) ;
\draw  (kh) ..controls +(0.1,0) and +(-0.1,0).. (kb') ;
\draw [style=dotted] (km) ..controls +(0.1,0) and +(-0.1,0).. (khi) ;
\draw  (khi) ..controls +(0.1,0) and +(-0.1,0).. (kh') ;
\draw  (kb) ..controls +(0.1,0) and +(-0.1,0).. (kbi) ;
\draw [style=dotted] (kbi) ..controls +(0.1,0) and +(-0.1,0).. (km') ;
\end{scope}

\begin{scope}[xshift=1.5cm]
\coordinate (kh) at (0,1.8) ;
\coordinate (km) at (0,1.5) ;
\coordinate (kb) at (0,1.2) ;
\coordinate (khi) at (0.5,2) ;
\coordinate (kbi) at (1,1) ;
\coordinate (kh') at (1.5,1.8) ;
\coordinate (km') at (1.5,1.5) ;
\coordinate (kb') at (1.5,1.2) ;
\draw  (kh) ..controls +(0.1,0) and +(-0.1,0).. (kb') ;
\draw [style=dotted] (km) ..controls +(0.1,0) and +(-0.1,0).. (khi) ;
\draw  (khi) ..controls +(0.1,0) and +(-0.1,0).. (kh') ;
\draw  (kb) ..controls +(0.1,0) and +(-0.1,0).. (kbi) ;
\draw [style=dotted] (kbi) ..controls +(0.1,0) and +(-0.1,0).. (km') ;
\end{scope}

\coordinate (x9) at (-3,1.4) ;
\coordinate (x10) at (3,1.4) ;
\coordinate (x11) at (3,-1.4) ;
\coordinate (x12) at (-3,-1.4) ;
\coordinate (x13) at (0,-1.4) ;

\draw [->] [color=blue] (x10) ..controls +(1.7,0) and +(1.7,0).. (x11) -- (x13);
\draw [color=blue] (x13) -- (x12) ..controls +(-1.7,0) and +(-1.7,0).. (x9)--(x10) ;

\draw [color=blue] (0,-1.7) node {$x$} ;

\end{scope}
\end{tikzpicture}

\caption{The generator $x$, core of $T_{int}$} \label{fig torus x}
\end{figure}
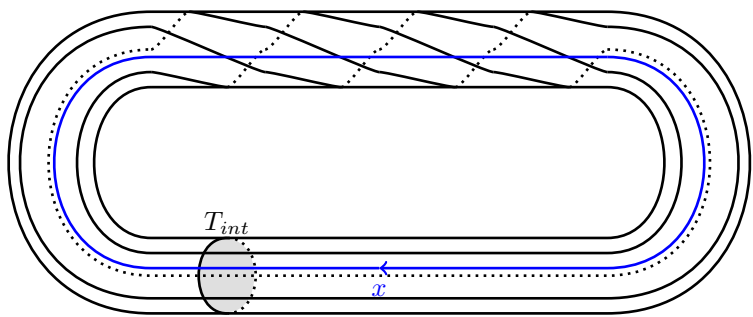

$V$ is homeomorphic to an annulus, thus $G_0$ admits the presentation $\langle z| - \rangle $ where the generator $z$ is drawn on Figure \ref{fig torus z}. Note that $z$ follows the direction of the strands, that is the same as the one of the core if $p>0$ and the opposite if $p<0$.

\begin{figure}[!h]
\centering
\begin{tikzpicture}[every path/.style={string } , every node/.style={transform shape , knot crossing , inner sep=1.5 pt } ]

\begin{scope}[xshift=0cm,rotate=0,scale=1]

\draw (-2,-0.8) node {$T_{int}$} ;

\coordinate (ch) at (-2,-1) ;
\coordinate (cb) at (-2,-2) ;
\draw [style=dotted] (ch) ..controls +(0.5,0) and +(0.5,0).. (cb) ;
\draw (ch) ..controls +(-0.5,0) and +(-0.5,0).. (cb) ;

\coordinate (t1) at (-3,2) ;
\coordinate (t2) at (3,2) ;
\coordinate (t3) at (-3,1) ;
\coordinate (t4) at (3,1) ;
\coordinate (t5) at (-3,-1) ;
\coordinate (t6) at (3,-1) ;
\coordinate (t7) at (-3,-2) ;
\coordinate (t8) at (3,-2) ;
\draw (t1) -- (t2) ..controls +(2.5,0) and +(2.5,0).. (t8) -- (t7) ..controls +(-2.5,0) and +(-2.5,0).. (t1) ;
\draw (t3) -- (t4) ..controls +(1,0) and +(1,0).. (t6) -- (t5) ..controls +(-1,0) and +(-1,0).. (t3) ;

\coordinate (k1) at (-3,1.8) ;
\coordinate (k2) at (3,1.8) ;
\coordinate (k3) at (-3,1.2) ;
\coordinate (k4) at (3,1.2) ;
\coordinate (k5) at (-3,-1.2) ;
\coordinate (k6) at (3,-1.2) ;
\coordinate (k7) at (-3,-1.8) ;
\coordinate (k8) at (3,-1.8) ;
\coordinate (k9) at (-3,1.5) ;
\coordinate (k10) at (3,1.5) ;
\coordinate (k11) at (3,-1.5) ;
\coordinate (k12) at (-3,-1.5) ;
\draw (k2) ..controls +(2.3,0) and +(2.3,0).. (k8) -- (k7) ..controls +(-2.3,0) and +(-2.3,0).. (k1) ;
\draw (k4) ..controls +(1.3,0) and +(1.3,0).. (k6) -- (k5) ..controls +(-1.3,0) and +(-1.3,0).. (k3) ;
\draw [style=dotted] (k10) ..controls +(1.8,0) and +(1.8,0).. (k11) -- (k12) ..controls +(-1.8,0) and +(-1.8,0).. (k9) ;

\begin{scope}[xshift=-3cm]
\coordinate (kh) at (0,1.8) ;
\coordinate (km) at (0,1.5) ;
\coordinate (kb) at (0,1.2) ;
\coordinate (khi) at (0.5,2) ;
\coordinate (kbi) at (1,1) ;
\coordinate (kh') at (1.5,1.8) ;
\coordinate (km') at (1.5,1.5) ;
\coordinate (kb') at (1.5,1.2) ;
\draw  (kh) ..controls +(0.1,0) and +(-0.1,0).. (kb') ;
\draw [style=dotted] (km) ..controls +(0.1,0) and +(-0.1,0).. (khi) ;
\draw  (khi) ..controls +(0.1,0) and +(-0.1,0).. (kh') ;
\draw  (kb) ..controls +(0.1,0) and +(-0.1,0).. (kbi) ;
\draw [style=dotted] (kbi) ..controls +(0.1,0) and +(-0.1,0).. (km') ;
\coordinate (kbr) at (0.25,1) ;
\coordinate (khr) at (1.25,2) ;
\draw [style=dotted, color=red] (kbr) ..controls +(0.1,0) and +(-0.1,0).. (khr) ;
\draw [color=red] (0,1.1) ..controls +(0.1,0) and +(-0.1,0).. (kbr) ;
\draw [->] [color=red] (khr) ..controls +(0.1,0) and +(-0.1,0).. (1.5,1.9) ;
\draw [->] [color=red] (0,1.9) ..controls +(0.3,0) and +(-0.1,0).. (1.5,1.4) ;
\draw [->] [color=red] (0,1.4) ..controls +(0.1,0) and +(-0.1,0).. (1.5,1.1) ;
\end{scope}

\begin{scope}[xshift=-1.5cm]
\coordinate (kh) at (0,1.8) ;
\coordinate (km) at (0,1.5) ;
\coordinate (kb) at (0,1.2) ;
\coordinate (khi) at (0.5,2) ;
\coordinate (kbi) at (1,1) ;
\coordinate (kh') at (1.5,1.8) ;
\coordinate (km') at (1.5,1.5) ;
\coordinate (kb') at (1.5,1.2) ;
\draw  (kh) ..controls +(0.1,0) and +(-0.1,0).. (kb') ;
\draw [style=dotted] (km) ..controls +(0.1,0) and +(-0.1,0).. (khi) ;
\draw  (khi) ..controls +(0.1,0) and +(-0.1,0).. (kh') ;
\draw  (kb) ..controls +(0.1,0) and +(-0.1,0).. (kbi) ;
\draw [style=dotted] (kbi) ..controls +(0.1,0) and +(-0.1,0).. (km') ;
\coordinate (kbr) at (0.25,1) ;
\coordinate (khr) at (1.25,2) ;
\draw [style=dotted, color=red] (kbr) ..controls +(0.1,0) and +(-0.1,0).. (khr) ;
\draw [color=red] (0,1.1) ..controls +(0.1,0) and +(-0.1,0).. (kbr) ;
\draw [->] [color=red] (khr) ..controls +(0.1,0) and +(-0.1,0).. (1.5,1.9) ;
\draw [->] [color=red] (0,1.9) ..controls +(0.3,0) and +(-0.1,0).. (1.5,1.4) ;
\draw [->] [color=red] (0,1.4) ..controls +(0.1,0) and +(-0.1,0).. (1.5,1.1) ;
\end{scope}

\begin{scope}[xshift=0cm]
\coordinate (kh) at (0,1.8) ;
\coordinate (km) at (0,1.5) ;
\coordinate (kb) at (0,1.2) ;
\coordinate (khi) at (0.5,2) ;
\coordinate (kbi) at (1,1) ;
\coordinate (kh') at (1.5,1.8) ;
\coordinate (km') at (1.5,1.5) ;
\coordinate (kb') at (1.5,1.2) ;
\draw  (kh) ..controls +(0.1,0) and +(-0.1,0).. (kb') ;
\draw [style=dotted] (km) ..controls +(0.1,0) and +(-0.1,0).. (khi) ;
\draw  (khi) ..controls +(0.1,0) and +(-0.1,0).. (kh') ;
\draw  (kb) ..controls +(0.1,0) and +(-0.1,0).. (kbi) ;
\draw [style=dotted] (kbi) ..controls +(0.1,0) and +(-0.1,0).. (km') ;
\coordinate (kbr) at (0.25,1) ;
\coordinate (khr) at (1.25,2) ;
\draw [style=dotted, color=red] (kbr) ..controls +(0.1,0) and +(-0.1,0).. (khr) ;
\draw [color=red] (0,1.1) ..controls +(0.1,0) and +(-0.1,0).. (kbr) ;
\draw [->] [color=red] (khr) ..controls +(0.1,0) and +(-0.1,0).. (1.5,1.9) ;
\draw [->] [color=red] (0,1.9) ..controls +(0.3,0) and +(-0.1,0).. (1.5,1.4) ;
\draw [->] [color=red] (0,1.4) ..controls +(0.1,0) and +(-0.1,0).. (1.5,1.1) ;
\end{scope}

\begin{scope}[xshift=1.5cm]
\coordinate (kh) at (0,1.8) ;
\coordinate (km) at (0,1.5) ;
\coordinate (kb) at (0,1.2) ;
\coordinate (khi) at (0.5,2) ;
\coordinate (kbi) at (1,1) ;
\coordinate (kh') at (1.5,1.8) ;
\coordinate (km') at (1.5,1.5) ;
\coordinate (kb') at (1.5,1.2) ;
\draw  (kh) ..controls +(0.1,0) and +(-0.1,0).. (kb') ;
\draw [style=dotted] (km) ..controls +(0.1,0) and +(-0.1,0).. (khi) ;
\draw  (khi) ..controls +(0.1,0) and +(-0.1,0).. (kh') ;
\draw  (kb) ..controls +(0.1,0) and +(-0.1,0).. (kbi) ;
\draw [style=dotted] (kbi) ..controls +(0.1,0) and +(-0.1,0).. (km') ;
\coordinate (kbr) at (0.25,1) ;
\coordinate (khr) at (1.25,2) ;
\draw [style=dotted, color=red] (kbr) ..controls +(0.1,0) and +(-0.1,0).. (khr) ;
\draw [color=red] (0,1.1) ..controls +(0.1,0) and +(-0.1,0).. (kbr) ;
\draw [->] [color=red] (khr) ..controls +(0.1,0) and +(-0.1,0).. (1.5,1.9) ;
\draw [->] [color=red] (0,1.9) ..controls +(0.3,0) and +(-0.1,0).. (1.5,1.4) ;
\draw [->] [color=red] (0,1.4) ..controls +(0.1,0) and +(-0.1,0).. (1.5,1.1) ;
\end{scope}

\coordinate (k1') at (-3,1.9) ;
\coordinate (k2') at (3,1.9) ;
\coordinate (k3') at (-3,1.1) ;
\coordinate (k4') at (3,1.1) ;
\coordinate (k5') at (-3,-1.1) ;
\coordinate (k6') at (3,-1.1) ;
\coordinate (k7') at (-3,-1.9) ;
\coordinate (k8') at (3,-1.9) ;
\coordinate (k9') at (-3,1.4) ;
\coordinate (k10') at (3,1.4) ;
\coordinate (k11') at (3,-1.4) ;
\coordinate (k12') at (-3,-1.4) ;
\draw [->] [color=red] (k2') ..controls +(2.4,0) and +(2.4,0).. (k8') -- (k7') ..controls +(-2.4,0) and +(-2.4,0).. (k1') ;
\draw [->] [color=red] (k4') ..controls +(1.2,0) and +(1.2,0).. (k6') -- (k5') ..controls +(-1.2,0) and +(-1.2,0).. (k3') ;
\draw [->] [color=red] (k10') ..controls +(1.7,0) and +(1.7,0).. (k11') -- (k12') ..controls +(-1.7,0) and +(-1.7,0).. (k9') ;

\draw [color=red] (0,-1.31) node {$z$} ;
\end{scope}
\end{tikzpicture}
\caption{The generator $z$ of $G_0$} \label{fig torus z}
\end{figure}
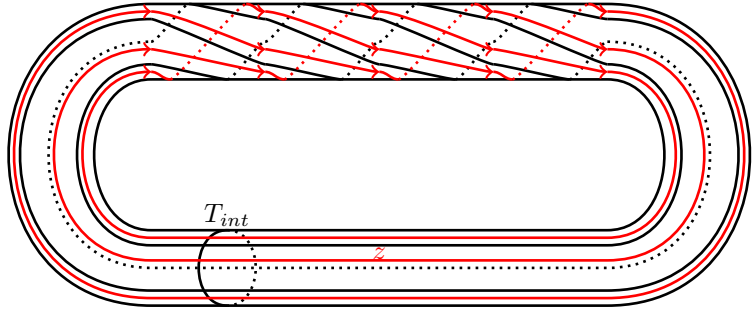

The inclusions $ V \subset U_1 $  and $V \subset U_2$ induce homotopy maps that send $z$ to $x^p$ and $y^q \lambda^p$ respectively. We hope the figures make this point clearer.

Thus, by the Seifert-van Kampen theorem, $G= \tilde{G}_{p,q}$ admits the presentation $P_{p,q} = \langle x , y , \lambda | x^p = \lambda^p y^q, \lambda y = y \lambda \rangle$.

\end{proof}

\subsection{A meridian-longitude system in the group presentation of the pattern} \label{subsection mer long}

In this subsection we will explain how to obtain in general a group presentation for $G_{P \subset T_P} = \pi_1(T_P \setminus P)$ containing the homotopy classes of a preferred meridian-longitude pair of $T_P$ as generators. This will not help us to prove Proposition \ref{prop groupe cable}, but this illustrates that the hypotheses of Lemma \ref{lem groupe sat} are not as restrictive as we could have thought.

The method will use Wirtinger presentations, and thus is not the same as the one used in Lemma \ref{lem groupe Tpq}, but it will work for any pattern $P$.

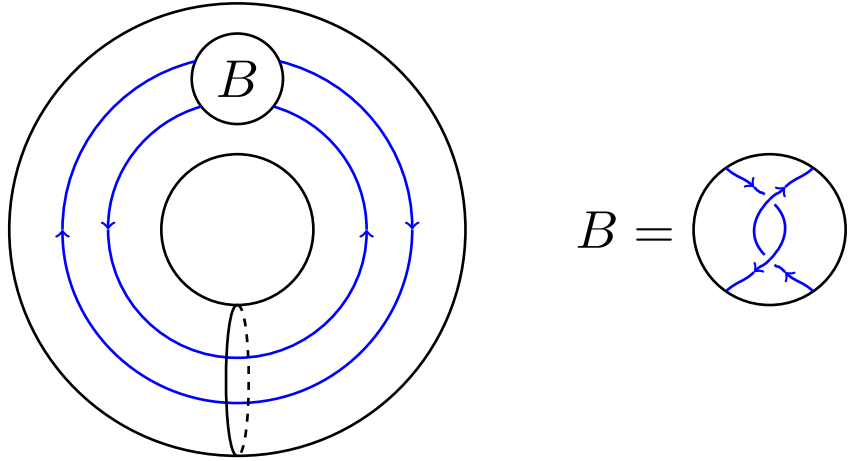
\begin{figure}[!h]
\centering
\begin{tikzpicture} [every path/.style={string } , every node/.style={transform shape , knot crossing , inner sep=1.5 pt } ] 
\begin{scope}[xshift=0cm,scale=1]
	\coordinate (n) at (-1,1.2) ;
	\coordinate (m) at (-2.6,1.5) ;
	\coordinate (O) at (0,0) ;
	\node (A) at (0.75,1.3) {};
	\node (B) at (1.25,2.17) {};
	
	\node (b) at (0,1.7) {};
	\node (b1) at (74:1.7) {};
	\node (b2) at (0:1.7) {};
	\node (b3) at (-180:1.7) {};
	\node (b4) at (-254:1.7) {};

	\node (h) at (0,2.3) {};
	\node (h1) at (76:2.3) {};
	\node (h2) at (0:2.3) {};
	\node (h3) at (-180:2.3) {};
	\node (h4) at (-256:2.3) {};

	\node (C) at (-0.75,1.3) {};
	\node (D) at (-1.25,2.17) {};	
	\coordinate (G) at (0,-1) ;
	\coordinate (H) at (0,-3) ;
	\coordinate (I) at (-0.15,2.1) ;
	\draw (O) circle (1) ;
	\draw (O) circle (3) ;	
	\node[scale=2] (o) at (0,2) {$B$};
	\draw (o) circle (0.6);
	
	\draw[color=blue] (b1) arc (74:0:1.7) ;
	\draw[color=blue][<-] (b2) arc (0:-180:1.7) ;
	\draw[color=blue][<-] (b3) arc (-180:-254:1.7) ;
		
	\draw[color=blue][->] (h1) arc (76:0:2.3) ;
	\draw[color=blue][->] (h2) arc (0:-180:2.3) ;
	\draw[color=blue] (h3) arc (-180:-256:2.3) ;

	\draw (G) ..controls +(-0.2,0) and +(-0.2,0).. (H) ;
	\draw [dashed] (G) ..controls +(0.2,0) and +(0.2,0).. (H) ;
\end{scope}

\begin{scope}[xshift=7cm,scale=1,color=blue]

	\node (o) at (0,0) {};
	\draw[color=black] (o) circle (1);

	\node[color=black,scale=2] (G) at (-1.9,0) {$B=$};

	\node (h) at (90:0.4) {};	
	\node (hd) at (55:1) {};	
	\node (hdm) at (70:0.6) {};	
	\node (hg) at (180-55:1) {};
	\node (hgm) at (180-70:0.6) {};

	\node (b) at (-90:0.4) {};
	\node (bd) at (-55:1) {};	
	\node (bdm) at (-70:0.6) {};	
	\node (bg) at (-180+55:1) {};
	\node (bgm) at (-180+70:0.6) {};

	\draw (hd.center) .. controls (hd.2 south west) and (hdm.2 north east) .. (hdm.center) ;	
	\draw[<-] (hdm.center) .. controls (hdm.2 south west) and (h.2 north east) .. (h.center) ;	
	\draw (b) .. controls (b.4 north west) and (h.4 south west) .. (h.center) ;	
	\draw (b) .. controls (b.2 south east) and (bdm.2 north west) .. (bdm.center) ;	
	\draw[<-] (bdm.center) .. controls (bdm.2 south east) and (bd.2 north west) .. (bd.center) ;

	\draw[->] (hg.center) .. controls (hg.2 south east) and (hgm.2 north west) .. (hgm.center) ;	
	\draw (hgm.center) .. controls (hgm.2 south east) and (h.2 north west) .. (h) ;	
	\draw (b.center) .. controls (b.4 north east) and (h.4 south east) .. (h) ;	
	\draw[->] (b.center) .. controls (b.2 south west) and (bgm.2 north east) .. (bgm.center) ;	
	\draw (bgm.center) .. controls (bgm.2 south west) and (bg.2 north east) .. (bg.center) ;	

\end{scope}
\end{tikzpicture}
\caption{The pattern seen as one $(m,m)$-tangle $B$ and $m$ {parallel} strands} \label{fig strands+tangle}
\end{figure}

First, notice that we can draw $P$ as $m$ parallel strands (not necessarily going in the same direction) and a $(m,m)$-tangle $B$. See Figure \ref{fig strands+tangle}, where we took $m=2$ and $P$ the Whitehead double pattern.

\begin{figure}[h]
\centering
\begin{tikzpicture} [every path/.style={string } , every node/.style={transform shape , knot crossing , inner sep=1.5 pt } ] 
\begin{scope}[xshift=0cm,scale=1]

	\node[scale=1.7,color=blue] (P) at (45:2.7) {$P$};
	\node[scale=2] (TP) at (-1.7,3) {$T_P$};

	\coordinate (n) at (-1,1.2) ;
	\coordinate (m) at (-2.6,1.5) ;
	\coordinate (O) at (0,0) ;
	\node (A) at (0.75,1.3) {};
	\node (B) at (1.25,2.17) {};
	
	\node (b) at (0,1.7) {};
	\node (b1) at (74:1.7) {};
	\node (b2) at (0:1.7) {};
	\node (b3) at (-180:1.7) {};
	\node (b4) at (-254:1.7) {};

	\node (h) at (0,2.3) {};
	\node (h1) at (76:2.3) {};
	\node (h2) at (0:2.3) {};
	\node (h3) at (-180:2.3) {};
	\node (h4) at (-256:2.3) {};

	\node (C) at (-0.75,1.3) {};
	\node (D) at (-1.25,2.17) {};	
	\coordinate (G) at (0,-1) ;
	\coordinate (H) at (0,-3) ;
	\coordinate (I) at (-0.15,2.1) ;
	\draw (O) circle (1) ;
	\draw (O) circle (3) ;	
	\node[scale=2] (o) at (0,2) {$B$};
	\draw (o) circle (0.6);
	
	\draw[color=blue] (b1) arc (74:0:1.7) ;
	\draw[color=blue][<-] (b2) arc (0:-180:1.7) ;
	\draw[color=blue][<-] (b3) arc (-180:-254:1.7) ;
		
	\draw[color=blue][->] (h1) arc (76:0:2.3) ;
	\draw[color=blue][->] (h2) arc (0:-180:2.3) ;
	\draw[color=blue] (h3) arc (-180:-256:2.3) ;

	\draw (G) ..controls +(-0.2,0) and +(-0.2,0).. (H) ;
	\draw [dashed] (G) ..controls +(0.2,0) and +(0.2,0).. (H) ;
\end{scope}

\node[scale=2] (eq) at (4,0) {$\Longleftrightarrow$};

\begin{scope}[xshift=7.3cm,scale=1]

	\node[scale=1.7,color=blue] (P) at (45:2.7) {$P$};
	\node[scale=1.7] (MP) at (0,-0.2) {$M_P$};	
	
	\node (A) at (0.75,1.3) {};
	\node (B) at (1.25,2.17) {};
	
	\node (b) at (0,1.7) {};
	\node (b1) at (74:1.7) {};
	\node (b2) at (0:1.7) {};
	\node (b3) at (-80:1.7) {};
	\node (b'3) at (-83:1.7) {};
	\node (b4) at (-100:1.7) {};
	\node (b5) at (-180:1.7) {};
	\node (b6) at (-254:1.7) {};

	\node (h) at (0,2.3) {};
	\node (h1) at (76:2.3) {};
	\node (h2) at (0:2.3) {};
	\node (h3) at (-82:2.3) {};
	\node (h'3) at (-84:2.3) {};
	\node (h4) at (-98:2.3) {};	
	\node (h5) at (-180:2.3) {};
	\node (h6) at (-256:2.3) {};

	\node (C) at (-0.75,1.3) {};
	\node (D) at (-1.25,2.17) {};	
	
	\node[scale=2] (o) at (0,2) {$B$};
	\draw (o) circle (0.6);
	
	\draw[color=blue] (b1) arc (74:0:1.7) ;
	\draw[color=blue][<-] (b2) arc (0:-77:1.7) ;
	\draw[color=blue] (b'3) arc (-83:-100:1.7) ;
	\draw[color=blue] (b4) arc (-100:-180:1.7) ;
	\draw[color=blue][<-] (b5) arc (-180:-254:1.7) ;
		
	\draw[color=blue][->] (h1) arc (76:0:2.3) ;
	\draw[color=blue] (h2) arc (0:-80:2.3) ;
	\draw[color=blue] (h'3) arc (-84:-100:2.3) ;
	\draw[color=blue][->] (h4) arc (-98:-180:2.3) ;
	\draw[color=blue] (h5) arc (-180:-256:2.3) ;

	\node (mh) at (0,-0.7) {};
	\node (mb) at (0,-3) {};

	\draw (mb.center) .. controls (mb.4  east) and (h3.4 south) .. (h3.center) ;
	\draw (h3.center) .. controls (h3.4 south) and (b3.4 north) .. (b3.center) ;
	\draw[->] (b3.center) .. controls (b3.4 north) and (mh.4  east) .. (mh.center) ;
	\draw (mh.center) .. controls (mh.4  west) and (b4.4 north) .. (b4) ;
	\draw (b4) .. controls (b4.4 south) and (h4.4 north) .. (h4) ;
	\draw[->] (h4) .. controls (h4.4 south) and (mb.4  west) .. (mb.center) ;	

\end{scope}

\end{tikzpicture}
\caption{The knot $P$ inside $T_P$ is the same as the $2$-link $P \sqcup M_P$ inside $S^3$} \label{fig torus and 2-link}
\end{figure}
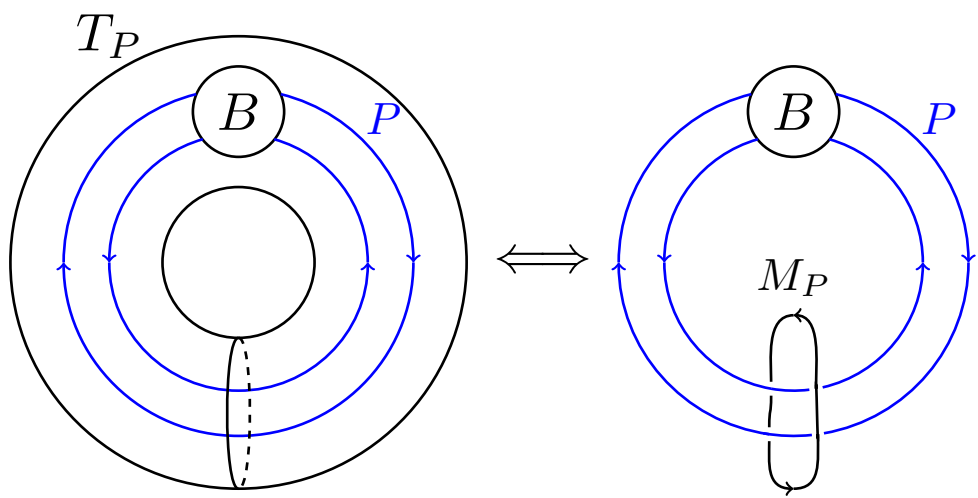

To compute a presentation of $G_{P \subset T_P} = \pi_1(T_P \setminus P)$, we remark that this group is naturally isomorphic to $G_{P \sqcup M_P} = \pi_1(S^3 \setminus (P\sqcup M_P))$ where $M_P$ is a meridian curve of $T_P$, see Figure \ref{fig torus and 2-link}.

Now we can compute a Wirtinger presentation of $G_{P \sqcup M_P}$ by the well-known process of the same name (see for example \cite[Section 3.B]{BZ}).

The Wirtinger generators are:
{\begin{itemize}
\item $\lambda$ the generator for the arc of $M_P$ that passes over the $m$ strands, which corresponds naturally to a longitude loop of $T_P$.
\item $\lambda_1, \ldots, \lambda_{m-1}$ the other generators of $M_P$, listed from the outside to the inside.
\item $a_1, \ldots, a_m$ and $a'_1, \ldots ,a'_m$ the generators for the $m$ strands of $P$, listed from the outside to the inside, such that $a'_i = \lambda a_i \lambda^{-1}$.
\item $b_1, \ldots, b_k$ the generators for the arcs strictly inside the tangle $B$.
\end{itemize}}
Figure \ref{fig wirtinger generators} pictures them partially (as always, the base point is assumed to be above the diagram).

\begin{figure}[!h]
\centering
\begin{tikzpicture} [every path/.style={string } , every node/.style={transform shape , knot crossing , inner sep=1.5 pt } ] 

\begin{scope}[xshift=0cm,scale=1]

	\node[scale=1.7,color=blue] (P) at (45:2.7) {$P$};
	\node[scale=1.7] (MP) at (0,-0.2) {$M_P$};	
	
	\node (A) at (0.75,1.3) {};
	\node (B) at (1.25,2.17) {};
	
	\node (b) at (0,1.7) {};
	\node (b1) at (74:1.7) {};
	\node (b2) at (0:1.7) {};
	\node (b3) at (-80:1.7) {};
	\node (b'3) at (-83:1.7) {};
	\node (b4) at (-100:1.7) {};
	\node (b5) at (-180:1.7) {};
	\node (b6) at (-254:1.7) {};

	\node (h) at (0,2.3) {};
	\node (h1) at (76:2.3) {};
	\node (h2) at (0:2.3) {};
	\node (h3) at (-82:2.3) {};
	\node (h'3) at (-84:2.3) {};
	\node (h4) at (-98:2.3) {};	
	\node (h5) at (-180:2.3) {};
	\node (h6) at (-256:2.3) {};

	\node (C) at (-0.75,1.3) {};
	\node (D) at (-1.25,2.17) {};	
	
	\node[scale=2] (o) at (0,2) {$B$};
	\draw (o) circle (0.6);
	
	\draw[color=blue] (b1) arc (74:0:1.7) ;
	\draw[color=blue][<-] (b2) arc (0:-77:1.7) ;
	\draw[color=blue] (b'3) arc (-83:-100:1.7) ;
	\draw[color=blue] (b4) arc (-100:-180:1.7) ;
	\draw[color=blue][<-] (b5) arc (-180:-254:1.7) ;
		
	\draw[color=blue][->] (h1) arc (76:0:2.3) ;
	\draw[color=blue] (h2) arc (0:-80:2.3) ;
	\draw[color=blue] (h'3) arc (-84:-100:2.3) ;
	\draw[color=blue][->] (h4) arc (-98:-180:2.3) ;
	\draw[color=blue] (h5) arc (-180:-256:2.3) ;

	\node (mh) at (0,-0.7) {};
	\node (mb) at (0,-3) {};

	\draw (mb.center) .. controls (mb.4  east) and (h3.4 south) .. (h3.center) ;
	\draw (h3.center) .. controls (h3.4 south) and (b3.4 north) .. (b3.center) ;
	\draw[->] (b3.center) .. controls (b3.4 north) and (mh.4  east) .. (mh.center) ;
	\draw (mh.center) .. controls (mh.4  west) and (b4.4 north) .. (b4) ;
	\draw (b4) .. controls (b4.4 south) and (h4.4 north) .. (h4) ;
	\draw[->] (h4) .. controls (h4.4 south) and (mb.4  west) .. (mb.center) ;

\end{scope}

\begin{scope}[color=magenta]

\node (aa0) at (150:1.1) {$a_2$} ;
	\node (aa1) at (150:1.3) {} ;
	\node (aa2) at (150:1.68) {} ;
	\node (aa3) at (150:1.72) {} ;
	\node (aa4) at (150:2.1) {} ;
	\draw (aa4) -- (aa3) ;
	\draw[->] (aa2) -- (aa1) ;

	\node (a1) at (170:1.9) {} ;
	\node (a2) at (170:2.28) {} ;
	\node (a3) at (170:2.32) {} ;
	\node (a4) at (170:2.7) {} ;
	\node (a0) at (170:2.9) {$a_1$} ;
	\draw[<-] (a4) -- (a3) ;
	\draw (a2) -- (a1) ;

	\node (aa'0) at (40:1.1) {$a'_2$} ;
	\node (aa'1) at (40:1.3) {} ;
	\node (aa'2) at (40:1.68) {} ;
	\node (aa'3) at (40:1.72) {} ;
	\node (aa'4) at (40:2.1) {} ;
	\draw (aa'4) -- (aa'3) ;
	\draw[->] (aa'2) -- (aa'1) ;

	\node (a'1) at (10:1.9) {} ;
	\node (a'2) at (10:2.28) {} ;
	\node (a'3) at (10:2.32) {} ;
	\node (a'4) at (10:2.7) {} ;
	\node (a'0) at (10:2.9) {$a'_1$} ;
	\draw[<-] (a'4) -- (a'3) ;
	\draw (a'2) -- (a'1) ;
	
	\node (l1) at (-0.2,-1) {} ;
	\node (l2) at (0.28,-1) {} ;
	\node (l3) at (0.3,-1) {} ;
	\node (l4) at (0.7,-1) {} ;
	\draw (l4) -- (l3) ;
	\draw[->] (l2) -- (l1) ;
	\node (l0) at (0,-1.25) {$\lambda$} ;

	\node (ll1) at (0.2,-1.85) {} ;
	\node (ll2) at (-0.29,-1.85) {} ;
	\node (ll3) at (-0.3,-1.83) {} ;
	\node (ll4) at (-0.7,-1.83) {} ;
	\draw (ll4) -- (ll3) ;
	\draw[->] (ll2) -- (ll1) ;
	\node (ll0) at (0,-2.1) {$\lambda_1$} ;	
	
\end{scope}

\begin{scope}[color=red]

	\node (m0) at (220:3.2) {$\mu$} ;
	\node (m1) at (220:1.2) {} ;
	\node (m2) at (210:1.7) {} ;
	\node (m3) at (213:2.3) {} ;
	\node (m4) at (220:2.9) {} ;

	\draw (m1.center) .. controls (m1.2  north west) and (m2.2 east) .. (m2) ;
	\draw (m2) .. controls (m2.2 south west) and (m3.2 north east) .. (m3) ;
	\draw (m3) .. controls (m3.2  west) and (m4.4 north west) .. (m4.center) ;
	\draw[->] (m4.center) .. controls (m4.4  south east) and (m1.8 south east) .. (m1.center) ;

\end{scope}

\end{tikzpicture}

\caption{The Wirtinger generators} \label{fig wirtinger generators}
\end{figure}
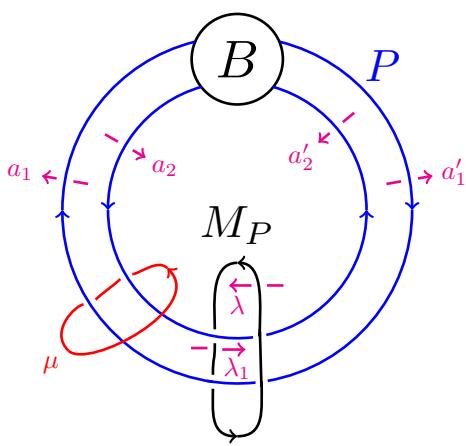

Note that we can assume that the $a_i$ and the $a'_i$ are all distinct, since we can add a first Reidemeister move twist at each of the $2m$ points of entrance of $P$ into $B$. 

The relators are: 
{\begin{itemize}
\item $r_1, \ldots , r_{m+k-1}$, some words in the $a_i$, $a'_i$ and $b_j$, corresponding to the crossings inside $B$.
\item $a'_i = \lambda a_i \lambda^{-1}$ for the crossings where $M_P$ passes over $P$.
\item $\lambda_1 = a_1^{e_1} \lambda a_1^{-e_1} ,
\lambda_2 = a_2^{e_2} \lambda_1 a_2^{-e_2},
\ldots ,
\lambda = a_m^{e_m} \lambda_{m-1} a_m^{-e_m}$ for the crossings where $M_P$ passes under $P$ (here $e_i = \pm 1$ depends on the orientation of the $i$-th strand).
\end{itemize}}

Thus $G_{P \sqcup M_P}$ admits the Wirtinger presentation
$$Q= \langle 
a_i , a'_i,  b_j, \lambda_{\alpha}, \lambda
| r_l, 
a'_i = \lambda a_i \lambda^{-1},
\lambda_1 = a_1^{e_1} \lambda a_1^{-e_1} ,
\ldots ,
\lambda = a_m^{e_m} \lambda_{m-1} a_m^{-e_m}
  \rangle,$$
where $i = 1, \ldots , m$, $j= 1 , \ldots k$, $ \alpha = 1, \ldots , m-1$ and  $l=1, \ldots, m+k-1$.  
 
A preferred longitude of $T_P$ is among the generators of $Q$, as $\lambda$. We also want a meridian loop $\mu$. As shown in Figure \ref{fig wirtinger generators}, $\mu$ is equal to $a_m^{e_m} \ldots a_1^{e_1}$.
We can thus write
$$Q_1= \langle 
a_i , a'_i,  b_j, \lambda_{\alpha}, \lambda, \mu
| r_l, 
a'_i = \lambda a_i \lambda^{-1},
\lambda_1 = a_1^{e_1} \lambda a_1^{-e_1} ,
\ldots ,
\lambda = a_m^{e_m} \lambda_{m-1} a_m^{-e_m}, 
\mu = a_m^{e_m} \ldots a_1^{e_1}
  \rangle$$
an other presentation of $G_{P \sqcup M_P}$, that has the form we wanted.

Now we can simplify this presentation and get rid of the generators $\lambda_{\alpha}$.

By substituting $\lambda_{\alpha}$ with $a_{\alpha}^{e_{\alpha}} \lambda_{\alpha-1} a_\alpha^{-e_\alpha}$ from $\alpha = 1$ to $m-1$ (with the convention $\lambda_0 = \lambda$), we obtain the simplified presentation 
$$Q_2= \langle 
a_i , a'_i,  b_j, \lambda, \mu
| r_l, 
a'_i = \lambda a_i \lambda^{-1},
\lambda = (a_m^{e_m} \ldots a_1^{e_1}) \lambda  (a_1^{-e_1} \ldots a_m^{-e_m}) ,
\mu = a_m^{e_m} \ldots a_1^{e_1}
  \rangle$$
that is equivalent to 
$$Q_3= \langle 
a_i , a'_i,  b_j, \lambda, \mu
| r_l, 
a'_i = \lambda a_i \lambda^{-1},
\lambda \mu =  \mu\lambda,
\mu = a_m^{e_m} \ldots a_1^{e_1}
  \rangle.$$
  
In conclusion, the group of the pattern knot $P$ inside its solid torus $T_P$ admits a group presentation of the form of $Q_3$. This presentation is simple in the sense that the generators $a_i, a'_i, b_j$ and the relators $r_l$ can all be read of the diagram of $P$. Moreover, $Q_3$ contains a preferred meridian-longitude pair of $T_P$ in its generators.

\begin{rem}
This method gives us the (simplified) presentation
$$ \langle b, \lambda, \mu | \lambda \mu \lambda^{-1} \mu^{-1}, 
b \lambda b \lambda^{-1} b^{-1} \lambda \mu b^{-1} \lambda^{-1} \rangle$$
for the Whitehead link.
\end{rem}

\subsection{Group presentation of a satellite knot}

The following lemma gives us a group presentation of the satellite knot when we know a presentation of the pattern group with a preferred meridian-longitude pair of the pattern torus among its generators and any presentation of the companion group.

\begin{lem} \label{lem groupe sat}
Let $T$ be a tubular neighboorhood of $T_C$ distinct from it. We will take $pt$ any point in $T \smallsetminus T_C$, it will be the basepoint for all the following fundamental groups.
Notice that  $G_{P \subset T_P} =  \pi_1(T \setminus S_{C,P})$ is isomorphic to ${\pi_1(T_P\setminus P,~pt')}$ where $pt' = h_{PC}^{-1}(pt)$.

Suppose there exists $P_{P \subset T_P} = \langle b_1 , \ldots , b_{l-1}, \lambda, \mu | s_1 , \ldots, s_{l} \rangle $ a presentation of $G_{P \subset T_P}$ where $\lambda$ and $\mu$ are the homotopy classes of a  longitude curve and a meridian curve of $T_P$.

Then there exists a presentation $P_C = \langle a_1 , \ldots , a_k | r_1 , \ldots, r_{k-1}   \rangle$ of $G_C$
and a presentation
$$P_S = \langle a_1 , \ldots , a_k, b_1 , \ldots , b_{l-1}, \lambda, \mu | r_1 , \ldots, r_{k-1}, s_1, \ldots, s_{l-1}, 
\lambda^{-1} W(a_i) , a_k^{-1} \mu
   \rangle$$
of $G_S = \pi_1(S^3 \setminus S_{C,P})$, with $W(a_i)$ a word in the $a_i, i= 1 , \ldots ,k$.
\end{lem}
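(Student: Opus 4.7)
The plan is to apply the Seifert--van Kampen theorem to a two-piece decomposition of $M_S$ and then simplify the resulting amalgamated presentation.

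I choose an open neighborhood $U_1$ of $T \setminus S_{C,P}$ and an open neighborhood $U_2$ of $M_C = S^3 \setminus T_C$, so that $U_1 \cup U_2$ realizes $M_S$ up to homotopy and $U_1 \cap U_2$ deformation retracts onto an open collar of $\partial T_C$. Then $\pi_1(U_1 \cap U_2) \cong \Z^2$, generated by the classes of the meridian and longitude of $\partial T_C$. By hypothesis, $\pi_1(U_1) = G_{P \subset T_P}$ has the presentation $P_{P \subset T_P}$, and under the homeomorphism $h_{PC}$ the meridian of $\partial T_C$ is identified with $\mu$ and the longitude with $\lambda$. For $\pi_1(U_2) = G_C$, I start from a Wirtinger presentation $P_C = \langle a_1, \ldots, a_k \mid r_1, \ldots, r_{k-1} \rangle$ obtained from a suitable diagram of $C$, arranged so that $a_k$ corresponds to a meridian loop of $\partial T_C$. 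Since $G_C$ is generated by the Wirtinger meridians, the longitude class of $\partial T_C$ is expressible as some word $W(a_1, \ldots, a_k)$.

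Feeding this data into Seifert--van Kampen, I obtain the presentation
\[
\bigl\langle a_1, \ldots, a_k, b_1, \ldots, b_{l-1}, \lambda, \mu \,\big|\, r_1, \ldots, r_{k-1}, s_1, \ldots, s_l, \lambda^{-1} W(a_i), a_k^{-1} \mu \bigr\rangle
\]
for $G_S$. This presentation has $k+l+1$ generators and $k+l+1$ relators, hence deficiency $0$; since $G_S$ is a knot group, which has deficiency one, one relator must be redundant, and I claim a specific $s_j$ can be discarded. The key point is that $\mu$ and $\lambda$ represent the meridian and longitude of the incompressible torus $\partial T_P$, so they commute in $G_{P \subset T_P}$, meaning $[\mu, \lambda]$ lies in the normal closure of $\{s_1, \ldots, s_l\}$ in the free group on the generators. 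After the gluing substitutions $\mu = a_k$ and $\lambda = W(a_i)$, this commutator becomes $[a_k, W(a_i)]$; but $a_k$ and $W(a_i)$ are the images of the two generators of $\pi_1(\partial T_C) \cong \Z^2$ inside $G_C$ via the inclusion of the incompressible torus $\partial T_C \hookrightarrow M_C$, so they already commute in $G_C$ as a consequence of $r_1, \ldots, r_{k-1}$ alone. A Tietze transformation then eliminates one $s_j$, which after renumbering is $s_l$, yielding the claimed presentation $P_S$.

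The main obstacle I expect is making the last Tietze step genuinely precise. Deficiency considerations guarantee that some relator is redundant, but singling out an $s_j$ requires knowing that the commutator relation $[\mu, \lambda]$ can be chosen as one of the defining relators of $P_{P \subset T_P}$. This can always be arranged by a preliminary Tietze move --- for instance, the presentation $Q_3$ built in Subsection~\ref{subsection mer long} has $\lambda \mu \lambda^{-1} \mu^{-1}$ explicitly among its relators --- and it is exactly this relator that becomes redundant once the gluing identifications are imposed.
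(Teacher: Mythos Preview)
Your proof is correct and follows the same Seifert--van Kampen decomposition as the paper: the companion exterior $S^3 \setminus \overline{T_C}$ and the pattern piece $T \setminus S_{C,P}$, glued along the collar of $\partial T_C$. In fact you are more careful than the paper on the one subtle point: the paper's proof simply writes down the final presentation with $s_1,\ldots,s_{l-1}$ and does not explain why $s_l$ may be dropped, whereas you correctly identify that the redundant relator is the commutator $[\lambda,\mu]$ (which, after the substitutions $\mu=a_k$, $\lambda=W(a_i)$, already holds in $G_C$ since a meridian and its longitude commute) --- this is exactly the argument the paper gives later, in Section~\ref{section}.4, for the specific cable case.
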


\begin{proof}

We will use the Seifert-van Kampen theorem with the basepoint $pt$. We denote $W= S^3 \smallsetminus S_{C,P}$, $U_C = S^3 \smallsetminus \overline{T_C}$, $U_P= T \smallsetminus S_{C,P}$, $V = T \smallsetminus \overline{T_C}$, and $G_S , G_C , G_{P \subset T_P} , G_0$ their respective fundamental groups.

The drawings of Figure \ref{fig BIG} are meant to represent an angular fraction of the $C$-shaped sets, a fraction that contains the  \guillemotleft  essence of the pattern $P$\guillemotright \ and also the basepoint $pt$. They are here to make perfectly clear what $W, U_C, U_P, V$ are.

\begin{figure}[!h]
\centering

\begin{tikzpicture}[every path/.style={string ,black} , every node/.style={transform shape , knot crossing , inner sep=1.5 pt } ]

\begin{scope}[xshift=-2cm,yshift=0cm,scale=0.5]

\coordinate (nom) at (90:12.5) ;
\draw[scale=2] (nom) node {$W$} ;
\coordinate (tc) at (80:14.6) ;
\draw[scale=2] (tc) node {$S_{C,P}$} ;

\coordinate (pt) at (89:15.4) ;
\draw[scale=2] (pt) node[above left]{$pt$} ;
\draw (pt) node {$\bullet$} ;
\coordinate (c) at (70:14-0.5);
\coordinate (cg) at (71:14-0.4);
\coordinate (cd) at (69:14-0.6);

\coordinate (h1) at (90:14+2) ;
\coordinate (h2) at (90:14+1) ;
\coordinate (h3) at (90:14+0.5) ;
\coordinate (b1) at (90:14-2) ;
\coordinate (b2) at (90:14-1) ;
\coordinate (b3) at (90:14-0.5) ;

\coordinate (h1') at (50:14+2) ;
\coordinate (h2') at (50:14+1) ;
\coordinate (h3') at (50:14+0.5) ;
\coordinate (b1') at (50:14-2) ;
\coordinate (b2') at (50:14-1) ;
\coordinate (b3') at (50:14-0.5) ;

\draw (b3) ..controls +(0.25,0) and +(-0.91,-0.42).. (c.center) ;
\draw (h3') ..controls +(-0.77/4,0.16) and +(0.91,0.42).. (c.center) ;
\draw (h3) ..controls +(0.25,0) and +(-0.42,0.91).. (cg) ;
\draw (b3') ..controls +(-0.77/4,0.16) and +(0.42,-0.91).. (cd) ;

\end{scope}

\begin{scope}[xshift=4cm,yshift=0cm,scale=0.5]

\coordinate (nom) at (90:12.5) ;
\draw[scale=2] (nom) node {$U_P$} ;
\coordinate (tt) at (80:15.9) ;
\draw[scale=2] (tt) node {$T$} ;
\coordinate (tc) at (80:14.6) ;
\draw[scale=2] (tc) node {$S_{C,P}$} ;

\coordinate (pt) at (89:15.2) ;
\draw[scale=2] (pt) node[above left]{$pt$} ;
\draw (pt) node {$\bullet$} ;
\coordinate (c) at (70:14-0.5);
\coordinate (cg) at (71:14-0.4);
\coordinate (cd) at (69:14-0.6);

\coordinate (h1) at (90:14+2) ;
\coordinate (h2) at (90:14+1) ;
\coordinate (h3) at (90:14+0.5) ;
\coordinate (b1) at (90:14-2) ;
\coordinate (b2) at (90:14-1) ;
\coordinate (b3) at (90:14-0.5) ;

\draw (h1) ..controls +(1,0) and +(1,0).. (b1) ;
\draw (h1) ..controls +(-1,0) and +(-1,0).. (b1) ;

\coordinate (h1') at (50:14+2) ;
\coordinate (h2') at (50:14+1) ;
\coordinate (h3') at (50:14+0.5) ;
\coordinate (b1') at (50:14-2) ;
\coordinate (b2') at (50:14-1) ;
\coordinate (b3') at (50:14-0.5) ;

\draw (h1') ..controls +(1,0) and +(1,0).. (b1') ;
\draw [style=dotted] (h1') ..controls +(-1,0) and +(-1,0).. (b1') ;

\draw (h1) ..controls +(1,0) and +(-0.77,0.64).. (h1') ;
\draw (b1) ..controls +(1,0) and +(-0.77,0.64).. (b1') ;

\draw (b3) ..controls +(0.25,0) and +(-0.91,-0.42).. (c.center) ;
\draw (h3') ..controls +(-0.77/4,0.16) and +(0.91,0.42).. (c.center) ;
\draw (h3) ..controls +(0.25,0) and +(-0.42,0.91).. (cg) ;
\draw (b3') ..controls +(-0.77/4,0.16) and +(0.42,-0.91).. (cd) ;

\end{scope}

\begin{scope}[xshift=-2cm,yshift=-4cm,scale=0.5]

\coordinate (nom) at (90:12.5) ;
\draw[scale=2] (nom) node {$U_C$} ;
\coordinate (tc) at (80:14.9) ;
\draw[scale=2] (tc) node {$T_C$} ;

\coordinate (pt) at (89:15.2) ;
\draw[scale=2] (pt) node[above left]{$pt$} ;
\draw (pt) node {$\bullet$} ;
\coordinate (c) at (70:14-0.5);
\coordinate (cg) at (71:14-0.4);
\coordinate (cd) at (69:14-0.6);

\coordinate (h1) at (90:14+2) ;
\coordinate (h2) at (90:14+1) ;
\coordinate (h3) at (90:14+0.5) ;
\coordinate (b1) at (90:14-2) ;
\coordinate (b2) at (90:14-1) ;
\coordinate (b3) at (90:14-0.5) ;

\draw (h2) ..controls +(0.5,0) and +(0.5,0).. (b2) ;
\draw (h2) ..controls +(-0.5,0) and +(-0.5,0).. (b2) ;

\coordinate (h1') at (50:14+2) ;
\coordinate (h2') at (50:14+1) ;
\coordinate (h3') at (50:14+0.5) ;
\coordinate (b1') at (50:14-2) ;
\coordinate (b2') at (50:14-1) ;
\coordinate (b3') at (50:14-0.5) ;

\draw (h2') ..controls +(0.5,0) and +(0.5,0).. (b2') ;
\draw[style=dashed] (h2') ..controls +(-0.5,0) and +(-0.5,0).. (b2') ;

\draw(h2) ..controls +(1,0) and +(-0.77,0.64).. (h2') ;
\draw(b2) ..controls +(1,0) and +(-0.77,0.64).. (b2') ;

\end{scope}

\begin{scope}[xshift=4cm,yshift=-4cm,scale=0.5]

\coordinate (nom) at (90:12.5) ;
\draw[scale=2] (nom) node {$V$} ;
\coordinate (tt) at (80:15.9) ;
\draw[scale=2] (tt) node {$T$} ;
\coordinate (tc) at (80:14.9) ;
\draw[scale=2] (tc) node {$T_C$} ;

\coordinate (pt) at (89:15.2) ;
\draw[scale=2] (pt) node[above left]{$pt$} ;
\draw (pt) node {$\bullet$} ;
\coordinate (c) at (70:14-0.5);
\coordinate (cg) at (71:14-0.4);
\coordinate (cd) at (69:14-0.6);

\coordinate (h1) at (90:14+2) ;
\coordinate (h2) at (90:14+1) ;
\coordinate (h3) at (90:14+0.5) ;
\coordinate (b1) at (90:14-2) ;
\coordinate (b2) at (90:14-1) ;
\coordinate (b3) at (90:14-0.5) ;

\draw (h1) ..controls +(1,0) and +(1,0).. (b1) ;
\draw (h1) ..controls +(-1,0) and +(-1,0).. (b1) ;
\draw (h2) ..controls +(0.5,0) and +(0.5,0).. (b2) ;
\draw (h2) ..controls +(-0.5,0) and +(-0.5,0).. (b2) ;

\coordinate (h1') at (50:14+2) ;
\coordinate (h2') at (50:14+1) ;
\coordinate (h3') at (50:14+0.5) ;
\coordinate (b1') at (50:14-2) ;
\coordinate (b2') at (50:14-1) ;
\coordinate (b3') at (50:14-0.5) ;

\draw (h1') ..controls +(1,0) and +(1,0).. (b1') ;
\draw [style=dotted] (h1') ..controls +(-1,0) and +(-1,0).. (b1') ;
\draw [style=dashed](h2') ..controls +(0.5,0) and +(0.5,0).. (b2') ;
\draw[style=dotted] (h2') ..controls +(-0.5,0) and +(-0.5,0).. (b2') ;

\draw (h1) ..controls +(1,0) and +(-0.77,0.64).. (h1') ;
\draw[style=dashed] (h2) ..controls +(1,0) and +(-0.77,0.64).. (h2') ;
\draw (b1) ..controls +(1,0) and +(-0.77,0.64).. (b1') ;
\draw[style=dashed] (b2) ..controls +(1,0) and +(-0.77,0.64).. (b2') ;

\end{scope}

\end{tikzpicture}

\caption{The four open sets for the Seifert-van Kampen theorem} \label{fig BIG}
\end{figure}
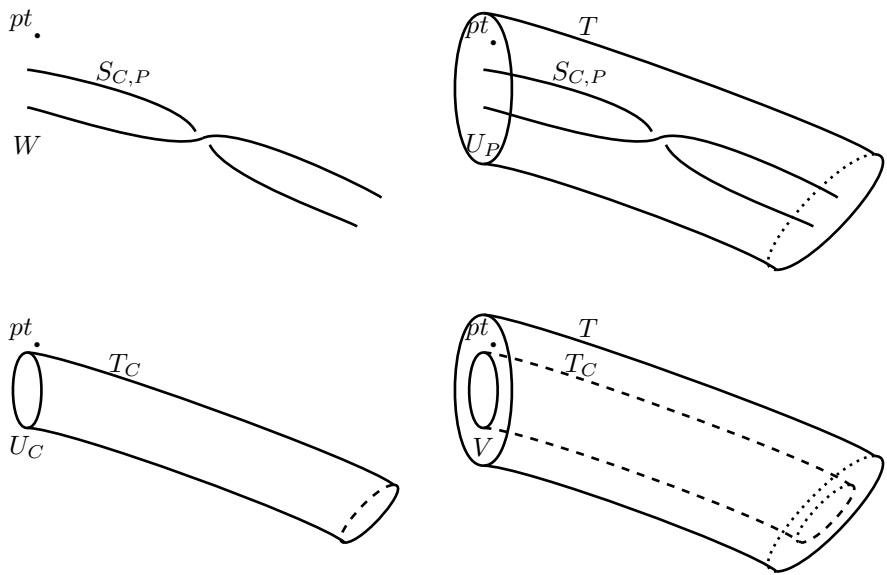

We take a Wirtinger presentation $P_C = \langle a_1 , \ldots , a_k | r_1 , \ldots, r_{k-1}   \rangle$  of \\
$G_C = \pi_1(S^3 \setminus C) = \pi_1(S^3 \setminus \overline{T_C}) = \pi_1(U_C)$ associated to a planar regular diagram projection of $C$.

We then consider $P$ inside $T_P$. The open set $U_P= T \smallsetminus S_{C,P}$ is homotopy equivalent to $T_C \smallsetminus S_{C,P}$, which is the image of $T_P \smallsetminus P$ by the homeomorphism $h_{PC}$. Thus $\pi_1(U_P) = G_{P \subset T_P}$. Let us denote $\lambda$ a longitude of $T_P$ and the corresponding element of $G_{P \subset T_P}$.

$V$ is homotopy equivalent to a $2$-torus, thus $G_0 = \langle \lambda_0 , \mu_0 | \lambda_0 \mu_0 \lambda_0^{-1} \mu_0^{-1} \rangle$, where $(\mu_0 , \lambda_0)$ is the homotopy class of a preferred meridian-longitude pair.

$V \subset U_C$ maps $\mu_0$ to any meridian loop of $G_C$, for instance $a_k$, and $\lambda_0$ to $W(a_i)$ a word in the $a_i$ such that $W(a_i)$ is a  longitude loop of the knot $C$.

$V \subset U_P$ maps $\mu_0$ to $\mu$ (a meridian loop of $\partial T_P$ that passes around the $m$ strands), and $\lambda_0$ to $\lambda$.

Hence, by the Seifert-van Kampen theorem, $$P = \langle a_1 , \ldots , a_k, b_1 , \ldots , b_{l-1}, \lambda, \mu | r_1 , \ldots, r_{k-1}, s_1, \ldots, s_{l-1}, 
\lambda^{-1} W(a_i) , a_k^{-1} \mu
   \rangle$$
is a presentation of $G_S = \pi_1(W) = \pi_1(S^3 \setminus S_{C,P})$.

\end{proof}

\subsection{Details of the proof}

Let us prove (1) of the Proposition \ref{prop groupe cable}.

Let us consider the cable knot $S$ of companion $C$ and pattern $T(p,q)$.
There exists $P_C = \langle a_1 , \ldots , a_k | r_1 , \ldots, r_{k-1}   \rangle$ a Wirtinger presentation of $G_C = \pi_1(S^3 \setminus C)$.

Lemma \ref{lem groupe Tpq} and Lemma \ref{lem groupe sat} give us the following presentation of $G_S$:
$$P = \langle a_1 , \ldots , a_k, x, y, \lambda | r_1 , \ldots, r_{k-1}, x^p y^{-q} \lambda^{-p}, y \lambda y^{-1} \lambda^{-1} ,\lambda^{-1} W(a_i) , a_k^{-1}y   \rangle$$
with $b_1$ being $x$ and $\mu$ being $y$.

Then we can suppress the relation $y \lambda = \lambda y$ because it is equivalent to $a_k W(a_i)= W(a_i) a_k$ which is already true in $G_C$ because $a_k$ is a meridian loop of the knot $C$ and $W(a_i)$ is a corresponding longitude loop. Furthermore, we can replace $y$ by $a_k$ in the relators and delete the generator $y$ and the relator $a_k^{-1} y$.

Therefore
$$P_S = \langle a_1 , \ldots , a_k, x, \lambda | r_1 , \ldots, r_{k-1}, x^p a_k^{-q} \lambda^{-p},
\lambda^{-1} W(a_i)
   \rangle$$
is a presentation of $G_S = \pi_1(S^3 \setminus S)$, with $W(a_i)$ a word in the $a_i, i= 1 , \ldots ,k$. 

Furthermore, $\lambda$ is a  longitude loop of $C$ and $x$ is the homotopy class of the core of $T_C$, since it is the image of the core of $T_P$ by $h_{PC}$.

Now let us prove  (2):

Since $\lambda$ is a  longitude loop of $C$, its linking number with $C$ is zero, thus its linking number with $S$ is zero (it is multiplied by $p$ at each crossing during the cabling process), thus $\alpha_S(\lambda) = 0$.

All the $a_i$ have the same abelianization as $a_k$, which is equal to $y$, which is a meridian loop of $\partial T_C$ and therefore circles $p$ strands. Thus $\alpha_S(y) = p$.

Finally, the relation $x^p y^{-q} \lambda^{-p}$ in $G_S$ implies that $\alpha_S(x) =q$, which concludes the proof of Proposition \ref{prop groupe cable}.

\section{Open Questions} \label{section open}

(1) The $L^2$-Alexander invariant $\Delta_K^{(2)}$ of a knot $K$ is a class of maps from a subset $X_K$ of $\R_{>0}$ to $\R_{\geqslant 0}$, up to multiplication by the $(t \mapsto t^m)$, $m \in \Z$.

We can ask many interesting questions about these maps. 

(a) Are they continuous? We know some continuity properties of the Fuglede-Kadison determinant on invertible operators, but what about the operators we use here?

(b) Are they everywhere nonzero? Or equivalently, are the operators of determinant class for all $t \in X_K$? This question can be related to the 
Determinant Conjecture (cf \cite[Chapter 13]{Luc02}).

(c) Are there knots $K$ for which $X_K$ is not the whole $\R_{>0}$? This question can be related to the strong Atiyah conjecture (cf \cite[Chapter 10]{Luc02}).

\bigskip

(2) Theorem \ref{L2 cable} gives us a cabling formula for the $L^2$-Alexander invariant. Even if an intuitive $L^2$-re-writing of the satellite formula for the Alexander polynomial stands false, cf Remark \ref{rem sat formula false}, maybe we can get a more general formula. Are there other $L^2$ satellite formulas for certain classes of pattern knots, ones that, like torus knot patterns, have group presentations that are easy to manipulate?



\end{document}